\numberwithin{equation}{section}
\theoremstyle{plain}
\newtheorem{theorem}[subsubsection]{Theorem}
\newtheorem{lemma}[subsubsection]{Lemma}
\newtheorem{prop}[subsubsection]{Proposition}
\newtheorem{cor}[subsubsection]{Corollary}
\newtheorem{conj}[subsubsection]{Conjecture}
\newtheorem*{claim}{Claim}
\theoremstyle{definition}
\newtheorem{defn}[subsubsection]{Definition}
\newtheorem{remark}[subsubsection]{Remark}
\newtheorem{ex}[subsubsection]{Example}
\def\AA{\mathbb{A}}
\def\CC{\mathbb{C}}
\def\DD{\mathbb{D}}
\def\GG{\mathbb{G}}
\def\PP{\mathbb{P}}
\def\QQ{\mathbb{Q}}
\def\RR{\mathbb{R}}
\def\VV{\mathbb{V}}
\def\XX{\mathbb{X}}
\def\ZZ{\mathbb{Z}}
\def\calB{\mathcal{B}}
\def\calD{\mathcal{D}}
\def\calE{\mathcal{E}}
\def\calF{\mathcal{F}}
\def\calG{\mathcal{G}}
\def\calH{\mathcal{H}}
\def\calK{\mathcal{K}}
\def\calL{\mathcal{L}}
\def\calN{\mathcal{N}}
\def\calO{\mathcal{O}}
\newcommand\cB{\mathcal{B}}
\newcommand\cE{\mathcal{E}}
\newcommand\cF{\mathcal{F}}
\newcommand\cH{\mathcal{H}}
\newcommand\cI{\mathcal{I}}
\newcommand\cK{\mathcal{K}}
\newcommand\cL{\mathcal{L}}
\newcommand\cM{\mathcal{M}}
\newcommand\cN{\mathcal{N}}
\newcommand\cO{\mathcal{O}}
\newcommand\cT{\mathcal{T}}
\newcommand\frX{\mathfrak{X}}
\newcommand\frg{\mathfrak{g}}
\newcommand{\Bun}{\textup{Bun}}
\newcommand\ev{\textup{ev}}
\newcommand{\Fl}{\textup{Fl}}
\newcommand\Forg{\textup{Forg}}
\newcommand{\Gr}{\textup{Gr}}
\newcommand{\Hecke}{\textup{Hecke}}
\newcommand\IC{\textup{IC}}
\renewcommand{\Im}{\textup{Im}}
\newcommand{\Ind}{\textup{Ind}}
\newcommand\Loc{\textup{Loc}}
\newcommand{\mult}{\textup{mult}}
\newcommand{\Perf}{\textup{Perf}}
\newcommand\Perv{\textup{Perv}}
\newcommand{\Pic}{\textup{Pic}}
\newcommand\Proj{\textup{Proj}}
\newcommand{\QCoh}{\textup{QCoh}}
\newcommand\Rep{\textup{Rep}}
\newcommand{\Res}{\textup{Res}}
\newcommand\Sat{\textup{Sat}}
\newcommand\Spec{\textup{Spec}}
\newcommand\St{\mathit{St}}
\newcommand{\Vect}{\textup{Vect}}
\newcommand\Aut{\textup{Aut}}
\newcommand\Hom{\textup{Hom}}
\newcommand\End{\textup{End}}
\newcommand\GL{\textup{GL}}
\newcommand\PGL{\textup{PGL}}
\newcommand\SL{\textup{SL}}
\renewcommand\sl{\mathfrak{sl}}
\newcommand{\Gm}{\GG_m}
\def\Ga{\GG_a}
\newcommand{\incl}{\hookrightarrow}
\newcommand{\isom}{\stackrel{\sim}{\to}}
\newcommand{\surj}{\twoheadrightarrow}
\renewcommand{\a}{\alpha}
\renewcommand{\b}{\beta}
\renewcommand{\d}{\delta}
\newcommand\D{\Delta}
\newcommand{\ep}{\epsilon}
\renewcommand{\l}{\lambda}
\renewcommand{\L}{\Lambda}
\newcommand{\om}{\omega}
\newcommand{\Om}{\Omega}
\renewcommand{\th}{\theta}
\DeclareMathOperator{\Eis}{Eis}
\newcommand{\const}[1]{\underline\QQ_{#1}}
\newcommand{\twtimes}[1]{\stackrel{#1}{\times}}
\newcommand{\jiao}[1]{\langle{#1}\rangle}
\newcommand{\wt}[1]{\widetilde{#1}}
\newcommand\quash[1]{}
\newcommand\mat[4]{\left(\begin{array}{cc} #1 & #2 \\ #3 & #4 \end{array}\right)}  
\newcommand\ov{\overline}
\newcommand\bs{\backslash}
\newcommand{\cohoc}[2]{\textup{H}_{c}^{#1}({#2})}     
\newcommand\upH{\textup{H}}
\newcommand\tdN{\widetilde{\calN}^{\vee}}
\newcommand\dcN{\calN^{\vee}}
\newcommand\tcN{\widetilde{\calN}}
\newcommand\dG{G^{\vee}}
\newcommand\dB{B^{\vee}}
\newcommand\dT{T^{\vee}}
\newcommand\dN{N^{\vee}}
\newcommand\pt{\textup{pt}}
\newcommand\sgn{\textup{sgn}}
\newcommand\vn{\varnothing}
\newcommand\xr{\xrightarrow}
\newcommand\PS{\PP^{1}\setminus S}
\newcommand{\Wh}{\textup{Wh}}
\newcommand{\Maps}{\textup{Maps}}
\newcommand{\Avg}{\textup{Avg}}
\newcommand{\sph}{\mathit{sph}}
\newcommand{\asp}{\mathit{asph}}
\newcommand{\aff}{\mathit{aff}}
\newcommand{\Wf}{W_{f}}  
\newcommand{\Lin}{\sum}
\newcommand{\dRLoc}{\textup{Conn}}
\newcommand{\Sh}{\mathit{Sh}}
\newcommand{\triv}{\mathit{triv}}
\newcommand{\alt}{\mathit{alt}}
\newcommand{\Coh}{\textup{Coh}}
\newcommand{\new}{\mathit{new}}
\newcommand{\old}{\mathit{old}}
\title[Geometric Langlands correspondence
for $\SL(2)$, $\PGL(2)$
over the pair of pants]{Geometric Langlands correspondence\\
for $\SL(2)$, $\PGL(2)$
over the pair of pants}
\dedicatory{}
\author{David Nadler}
\thanks{}
\address{Department of Mathematics, UC Berkeley, Evans Hall, Berkeley, CA 94720}
\email{nadler@math.berkeley.edu}
\author{Zhiwei Yun}
\thanks{}
\address{Department of Mathematics, Yale University,  10 Hillhouse Ave., New Haven, CT 06511}
\email{zhiwei.yun@yale.edu}
\date{}
\subjclass[2010]{14D24, 22E57}
\keywords{}
\begin{document}


\begin{abstract}
We establish the Geometric Langlands correspondence for rank one groups over the projective line
with three points of tame ramification. 
\end{abstract}


\maketitle

\tableofcontents


\section{Introduction}


\subsection{Main result}

%
%

Let $\PP^1$ denote the complex projective line, and fix the three-element subset $S=\{0, 1, \infty\}\subset \PP^1(\CC)$.


Let $\Bun_{\PGL(2)}(\PP^1, S)$ denote the moduli stack (over $\CC$)  of $G=\PGL(2)$-bundles on $\PP^1$ with Borel reductions along $S$.
In more classical language, 
it classifies rank two vector bundles $\calE$ with lines in the fibers $\ell_s \subset \calE|_s$, $s\in S$, all up to tensoring with line bundles.
It is locally of finite type with discretely many isomorphism classes of objects.   

Let $\Sh_!(\Bun_{\PGL(2)}(\PP^1, S))$ be the $\QQ$-linear dg category of constructible complexes of $\QQ$-modules on 
$\Bun_{\PGL(2)}(\PP^1, S)$ that are extensions by zero off of finite type substacks.

Let $\Loc_{\SL(2)}(\PP^1, S)$
denote the moduli stack (over $\QQ$)  of $\SL(2)$-local systems on $\PP^1\setminus S$ equipped near $S$ with a Borel reduction with unipotent monodromy. Thus a point of $\Loc_{\SL(2)}(\PP^1, S)$ consists of triples of pairs $ (A_s,\ell_s)$, ${s\in S}$, consisting of a matrix $A_s\in \SL(2)$ and an eigenline $A_s(\ell_s) \subset \ell_s$ with trivial eigenvalue $A_s|_{\ell_s} = 1$, and the matrices satisfy the equation $A_0 A_1 A_\infty = 1$ inside of $\SL(2)$.  It admits the presentation
$$
\xymatrix{
\Loc_{\SL(2)}(\PP^1, S) \simeq (\tilde\calN^\vee)^{S,\prod=1} /\SL(2) 
}
$$
where $\tilde\calN^\vee \simeq T^*\PP^1$ denotes the Springer resolution of the unipotent variety $\cN^{\vee}$ of $\dG=\SL(2)$, and $(\tilde\calN^\vee)^{S,\prod=1} $ denotes the product of $S$ copies of $\tdN$ with the equation on the group elements $\prod = 1$  imposed inside of $\SL(2)$. Alternatively, it can be shown that $\Loc_{\SL(2)}(\PP^1, S) $ also admits a linear presentation
$$
\xymatrix{
\Loc_{\SL(2)}(\PP^1, S) \simeq T^* ((\PP^1)^S/\SL(2)) 
}
$$ 
where the the equation $\prod = 1$ is replaced by the zero-fiber of the moment map $\mu:T^* ((\PP^1)^S)\to \sl(2)^*$. 

Let $\Coh(\Loc_{\SL(2)}(\PP^1, S))$ be the $\QQ$-linear dg category of coherent complexes on 
$\Loc_{\SL(2)}(\PP^1, S)$.

One can similarly introduce the above objects with the roles of $\PGL(2)$ and $\SL(2)$ swapped.
We will also need the slight variation  where we write $\Coh^{\SL(2)-\alt}(\Loc_{\PGL(2)}(\PP^1, S))$ for the 
$\QQ$-linear dg category of $\SL(2)$-equivariant coherent complexes on $(\tilde\calN^\vee)^{S, \prod=1}$,
where the equation $\prod = 1$ is imposed inside of $\PGL(2)$, and the center $\mu_2 \simeq Z(\SL(2)) \subset \SL(2)$ acts on coherent complexes by the alternating
representation.

Our main theorem is the following Geometric Langlands correspondence 
with tame ramification.

\begin{theorem}\label{thm: intro}
There are  equivalences of dg categories
\begin{equation}\label{eq: 1}
\xymatrix{
 \Coh ( \Loc_{\SL(2)}(\PP^1, S)) \ar[r]^-\sim & \Sh_!(\Bun_{\PGL(2)}(\PP^1, S))
}
\end{equation}
\begin{equation}\label{eq: 2}
\xymatrix{
 \Coh^{\SL(2)-\alt} (\Loc_{\PGL(2)}(\PP^1, S)) \ar[r]^-\sim & \Sh_!(\Bun_{\SL(2)}(\PP^1, S))
}
  \end{equation}
compatible with Hecke modifications and parabolic induction. 
\end{theorem}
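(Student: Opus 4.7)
The plan is to exploit the special rigidity of the pair of pants on both sides of the correspondence. The spectral side $\Loc_{\SL(2)}(\PP^1,S)$ is a small stack of virtual dimension $0$, presented as the Hamiltonian reduction of $(T^*\PP^1)^3$ by $\SL(2)$, while $\Bun_{\PGL(2)}(\PP^1,S)$ admits a finite stratification by Harder--Narasimhan type refined by relative positions of the three Borel reductions, each stratum being an explicit global quotient by a finite-dimensional group. My goal is to produce the functor from a global Hecke kernel, then prove it is an equivalence by comparing compact generators and their Ext-algebras on the two sides. The second equivalence \eqref{eq: 2} should follow from the same scheme once the first is established, with the central $\mu_2\subset \SL(2)$ manifesting itself as the $\SL(2)$-alt twist on the spectral side.

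First I would construct the functor \eqref{eq: 1} via a global Hecke/spectral action. At each puncture $s \in S$, Bezrukavnikov's tame local Langlands equivalence identifies Iwahori-monodromic nearby cycles on the affine flag variety with coherent sheaves on a Steinberg-type variety attached to $\tdN$. Convolving these three local equivalences against the geometric Hecke kernel over $\Bun_{\PGL(2)}(\PP^1,S)$ then yields a functor
\[
\Coh(\Loc_{\SL(2)}(\PP^1,S)) \longrightarrow \Sh_!(\Bun_{\PGL(2)}(\PP^1,S)),
\]
which by construction commutes with Hecke modifications at the three punctures. Compatibility with parabolic induction is checked by restricting to the torus Levi $T\subset \PGL(2)$, where both sides degenerate to the known rank-zero torus case on $\PP^1$.

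Next I would match explicit compact generators on the two sides. On the spectral side, the natural generators are structure sheaves of the irreducible components of $(\tdN)^{S,\prod=1}$ --- indexed by Bruhat-type data attached to triples of flags in $\PP^1$ --- together with Wakimoto-type line bundles encoding torus weights at each puncture. On the automorphic side, the corresponding generators are $!$-extensions of rank-one local systems along the strata of $\Bun_{\PGL(2)}(\PP^1,S)$ parametrized by the bundle type and the triple of Borel reductions; these should be in explicit bijection with the spectral generators via a Bott--Samelson dictionary. Parabolic induction from the torus handles the Eisenstein subcategory on both sides, leaving a very small ``cuspidal-type'' residue that can be identified by direct inspection in rank one.

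The hard part will be proving full faithfulness, i.e.\ matching all higher Ext-groups rather than just the endomorphism algebras of the generating set. This is where the subtle non-transverse intersections in $\Loc_{\SL(2)}(\PP^1,S)$ at reducible local systems (where $\prod=1$ meets the locus of non-generic triples) must be reconciled with the singular support of sheaves at reducible $\PGL(2)$-bundles. I expect this to reduce, by a base-change argument along each puncture, to Bezrukavnikov's local theorem, combined with a global Koszul-duality input reflecting the $3$-dimensional Calabi--Yau structure common to both sides. Once \eqref{eq: 1} is established, \eqref{eq: 2} follows from the parallel argument for $\Bun_{\SL(2)}$, with the nontrivial character of the central $\mu_2\subset \SL(2)$ producing precisely the $\SL(2)$-alt twist on the spectral side.
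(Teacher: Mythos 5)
Your outline has the right general shape---Bezrukavnikov's tame local equivalence at each puncture, a globally assembled Hecke action, matching of generators, then Ext comparison---but there are two genuine gaps that the paper's argument is built precisely to fill. The first and most serious is that you never specify the object of $\Sh_!(\Bun_{\PGL(2)}(\PP^1,S))$ that the structure sheaf $\calO_{\Loc}$ is sent to. ``Convolving the three local equivalences against the geometric Hecke kernel'' does not by itself produce a functor out of $\Coh(\Loc_{\SL(2)}(\PP^1,S))$: what one actually gets (after proving that the spherical Hecke action is locally constant in the unramified point, that its monodromies around the three punctures compose to the identity so that the relation $\prod=1$ can be imposed, and that the three Wakimoto actions and the Satake action glue via Gaitsgory's central functor) is an action of $\Perf(\Loc_{\SL(2)}(\PP^1,S))$ on the automorphic category. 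To turn that action into a functor one must act on a chosen object, and the choice matters enormously: the paper acts on the Whittaker sheaf $\Wh_S = i_!j_*\QQ$ supported on the $\calO(1)\oplus\calO$ locus. Its asphericity (killing all non-closed finite Hecke orbits) is what makes the functor intertwine the full affine Hecke actions, makes $\Hom(\Wh_S,-)$ factor through the quotient by old forms, and normalizes the whole correspondence. Acting on a generic object gives nothing.

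The second gap is in full faithfulness. The appeal to ``base change to Bezrukavnikov's local theorem plus a global Koszul-duality/Calabi--Yau input'' is not a reduction to anything checkable. The paper's route is: (i) prove the functor is equivariant for induction from two points of ramification ($\eta^\ell_s$ versus $\pi_s^*$), so that old forms match; (ii) show the quotient categories of new forms on both sides are generated by the Eisenstein objects $\calO_{\tdN_\Delta}(n+1)$ and $\Eis_n$ for $n\ge -1$ (an explicit stratification argument, which on the spectral side must contend with the non-reduced component of $(\tdN)^{S,\prod=1}$); and (iii) reduce by Wakimoto symmetry and continuity to the single computation $\Hom(\calO,\calO_{\tdN_\Delta}(n+1)) \simeq \Hom(\Wh_S,\Eis_n)$, which vanishes on both sides for $n\ge 0$ and is a line in degree $0$ for $n=-1$. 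Relatedly, your claim that parabolic-induction compatibility ``degenerates to the torus case'' is not a proof: matching $\calO_{\tdN_\Delta}(1)$ with $\Eis_0$ is one of the hardest points of the paper, requiring an explicit comparison of filtrations on $\cO_Y(0,1,0)$ and $b_!T_B$ and a passage to infinite resolutions in quotient categories, because $\calO_{\tdN_\Delta}(1)$ is coherent but not perfect. Without the Whittaker normalization and these reductions, the proposal does not close.
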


\begin{remark}
One can choose an equivalence
\begin{equation*}
\xymatrix{
 \Coh^{\SL(2)-\alt} (\Loc_{\PGL(2)}(\PP^1, S)) \simeq  \Coh (\Loc_{\PGL(2)}(\PP^1, S))
  }
\end{equation*}
for example by tensoring with a line bundle with an odd total twist, and thus reformulate  the second assertion
of the theorem in a more traditional form, but the formulation given in the theorem is more canonical
and independent of choices.
\end{remark}

\begin{remark}
It is also straightforward to use the theorem to deduce a similar result for $GL(2)$. 
\end{remark}

\begin{remark}
One can view the theorem as an instance of the traditional 
de Rham Geometric Langlands correspondence (see for example~\cite{BD}) or alternatively of the topological Betti Geometric Langlands correspondence (see~\cite{BNbetti} for an outline of  expectations).

On the automorphic side, the moduli $\Bun_{\PGL(2)}(\PP^1, S)$ has discretely many isomorphism classes
of objects, hence all of their codirections are nilpotent. Thus, if we work specifically with $\CC$-coefficients, via the Riemann-Hilbert correspondence, there is no difference in considering $\calD$-modules or complexes
of $\CC$-modules (with nilpotent singular support).
 
Similarly, on the spectral side, the Betti moduli $\Loc_{\SL(2)}(\PP^1, S))$ is algebraically isomorphic to the analogous de Rham moduli $\dRLoc_{\SL(2)}(\PP^1, S)$ of parabolic connections (see Corollary~\ref{c:spec mod same}).
Thus their  coherent complexes coincide.
\end{remark}

\begin{remark}
One can further impose nilpotent singular support (in the sense of ~\cite{AG}) on the coherent complexes on the spectral side. Under the equivalences of the theorem, this will correspond to requiring the stalks of the automorphic complexes to be torsion over the equivariant cohomology of automorphism groups. If one then passes to the ind-completions of these categories, what results are equivalences for all automorphic complexes without any constructibility or support restrictions
\begin{equation*}
\xymatrix{
 \Ind\Coh_\calN ( \Loc_{\SL(2)}(\PP^1, S)) \ar[r]^-\sim & \Sh(\Bun_{\PGL(2)}(\PP^1, S))
}
\end{equation*}
\begin{equation*}
\xymatrix{
\Ind \Coh_{\calN}^{\SL(2)-\alt} (\Loc_{\PGL(2)}(\PP^1, S)) \ar[r]^-\sim & \Sh(\Bun_{\SL(2)}(\PP^1, S))
}
  \end{equation*}

\end{remark}

\begin{remark}
One can also pass on the automorphic side to monodromic complexes of any specified monodromy at the three ramification points.
It is possible to find an equivalence with coherent complexes on the corresponding spectral stack of local systems with the same specified monodromy around the three ramification points. In the final section, we sketch the form this takes in the case
of unipotent monodromy at all three ramification points. For monodromy with more general semisimple part, the geometry only simplifies. 
\end{remark}


\subsection{Sketch of proof}

We highlight here some of the key structures in the proof of Theorem~\ref{thm: intro}. The second equivalence~\eqref{eq: 2} follows closely from the first~\eqref{eq: 1} so we will focus on the first.

\subsubsection{Spectral action}

The category $ \Sh_!(\Bun_{\PGL(2)}(\PP^1, S))$ is naturally acted upon by a large collection of commuting Hecke operators.

First, at each unramified point $x\in \PP^1 \setminus S$, the symmetric monoidal Satake category $\Sat_{\PGL(2)}  \simeq \Rep(\SL(2))$
of spherical perverse sheaves on the affine Grassmannian $\Gr_{\PGL(2)}$ acts via bundle modifications. 
It is a simple verification that the action is locally constant in $x\in \PP^1 \setminus S$, and hence factors through
the chiral homology
\begin{equation*}
\xymatrix{
 \int_{\PP^1 \setminus S} \Rep(\SL(2)) \simeq \Perf(\Loc_{\SL(2)}(\PP^1 \setminus S))
}
  \end{equation*}

Second, at each point $s\in S$, the monoidal affine Hecke category 
of Iwahori-equivariant constructible complexes on the affine flag variety
$\Fl_{\PGL(2)}$ acts  via modifications of bundles with flags.
In particular, its symmetric monoidal subcategory of Wakimoto operators acts, and hence via Bezrukavnikov's tame local Langlands correspondence~\cite{B}, the tensor category $\Perf(\tdN/\SL(2))$ of equivariant perfect complexes on the Springer resolution  acts at each point $s\in S$.

Thanks to the compatibility of Gaitsgory's central functor~\cite{G}, the above actions assemble into an action 
of the tensor category
of perfect complexes on the spectral stack
\begin{equation}\label{eq: action}
\xymatrix{
\Perf(\Loc_{\SL(2)}(\PP^1, S)) \otimes \Sh_!(\Bun_{\PGL(2)}(\PP^1, S))\ar[r] & \Sh_!(\Bun_{\PGL(2)}(\PP^1, S))
}
  \end{equation}
 By continuity, this can be extended to an action of quasi-coherent complexes
on all automorphic complexes and then further restricted to coherent complexes.

\begin{remark}
In the de Rham Geometric Langlands program, the construction of such an action is a deep ``vanishing theorem"~\cite{Gvanish}.
In the Betti Geometric Langlands program, it is a geometric consequence of requiring automorphic complexes to have nilpotent singular support~\cite{NYhecke}.
\end{remark}

\subsubsection{Whittaker sheaf}

To construct the functor~\eqref{eq: 1} from the  action~\eqref{eq: action}, 
we must choose an automorphic complex to act upon. It will be the object that the spectral structure sheaf $\calO \in \Perf(\Loc_{\SL(2)}(\PP^1, S))$  maps to, and there is a well known candidate given by the Whittaker sheaf $\Wh_{S}
\in  \Sh_!(\Bun_{\PGL(2)}(\PP^1, S))$.

In the situation at hand, the Whittaker sheaf takes the following simple form.
Consider the open substack of $\Bun_{\PGL(2)}(\PP^1, S)$ where the underlying bundle is $\calE \simeq \calO_{\PP^1}(1) \oplus \calO_{\PP^1}$. Consider the further open substacks where the lines take the form
\begin{equation*}
\xymatrix{
 \{\ell_s, s\in S, \mbox{ generic}\}  \ar@{^(->}[r]^-j & 
  \{\ell_s\not \subset \calO_{\PP^1}(1), s\in S\}    \ar@{^(->}[r]^-i & 
\Bun_{\PGL(2)}(\PP^1, S).
}\end{equation*}
Here ``generic'' in the first item means that, in addition to $\ell_{s}\not \subset \calO_{\PP^1}(1)$ for all $s\in S$, the three lines do not simultaneously lie in the image of any map $\cO_{\PP^{1}}\to \cE$. Then the Whittaker sheaf 
is given by the simple topological construction
\begin{equation*}
\xymatrix{
\Wh_{S} \simeq i_! j_* \QQ\in  \Sh_!(\Bun_{\PGL(2)}(\PP^1, S)).
}\end{equation*}

\begin{remark}
The most salient property of the Whittaker sheaf $\Wh_{S}$, and indeed the only property we use, is that it corepresents the functor of vanishing cycles
for a non-zero covector at the point given by the image of the natural induction map
\begin{equation*}
\xymatrix{
\Bun_B^{-1} (\PP^1)\ar[r]^-\sim &  \{\calE \simeq \calO_{\PP^1}(1) \oplus \calO_{\PP^1}, \ell_s   \subset \calO_{\PP^1}, s\in S\}     \subset   \Bun_G(\PP^1, S).
}
\end{equation*}
\end{remark}

\subsubsection{Compatibilities} 

With the functor~\eqref{eq: 1} in hand, to prove it is an equivalence, we first check that it behaves as expected with respect 
to certain distinguished objects.

First, we check that the functor~\eqref{eq: 1} is compatible with induction from two points of tame ramification. (In fact, we check that is equivariant for all affine Hecke symmetries at the ramification points.)
 Namely, for $s\in S$, we show that the functor~\eqref{eq: 1} fits as the top arrow in a commutative diagram
\begin{equation}\label{eq: intro change level}
\xymatrix{
    \Coh( \Loc_{\SL(2)}(\PP^1,S))  \ar[r] & 
      \Sh_!(\Bun_{\PGL(2)}(\PP^1, S))  
\\
  \ar[u]^-{\eta_s^\ell}  \Coh( \Loc_{\SL(2)}(\PP^1, S\setminus s))  \ar[r]^-\sim & 
      \Sh_!(\Bun_{\PGL(2)}(\PP^1, S\setminus s))  \ar[u]^-{\pi_s^*}
}
\end{equation}

Here the bottom arrow is  the Geometric Langlands correspondence
for  two points of tame ramification. A form of the Radon transform identifies it with Bezrukavnikov's tame local Langlands correspondence.  
The automorphic induction $\pi^*_s$ is pullback along the natural $\PP^1$-fibration
 \begin{equation*}
\xymatrix{ 
\pi_s: \Bun_{\PGL(2)}(\PP^1, S) \ar[r] & \Bun_{\PGL(2)}(\PP^1, S\setminus s)
}
\end{equation*}
where we forget the line at $s\in S$.
The spectral induction $\eta^*_s$ is the twisted integral transform 
 \begin{equation*}
\xymatrix{ 
 \eta^\ell_s(\calF) = q_{s*}(p_{s}^*(\calF) \otimes \calO_{\PP^1_s}(-1)[-1])
 }
\end{equation*}
associated to the correspondence
 \begin{equation*}
\xymatrix{ 
 \Loc_{\SL(2)}(\PP^1, S\setminus s) & \ar[l]_-{p_{s}}    \Loc_{\SL(2)}(\PP^1, S\setminus s)  \times_{\{s\}/\SL(2)} \PP_s^1/\SL(2)  \ar[r]^-{q_{s}} &  \Loc_{\SL(2)}(\PP^1, S) 
}
\end{equation*}
Alternatively, under the identification $\Loc_{\SL(2)}(\PP^1, S) \simeq T^* ((\PP^1)^S/\SL(2))$, the correspondence is 
simply the Lagrangian correspondence associated  to the projection $(\PP^1)^S \to (\PP^1)^{S\setminus s}$.

Next,  we check that the functor~\eqref{eq: 1} is compatible with parabolic induction in the form of Eisenstein series.
Namely, on the spectral side,
consider  the natural induction map 
\begin{equation*}
\xymatrix{
\Loc_{B^\vee}(\PP^1, S) \ar[r]^-\sim  & 
\tdN_\Delta/\SL(2) \subset \Loc_{\SL(2)}(\PP^1, S)
}
\end{equation*}
with image the reduced total diagonal where all lines coincide.
On the automorphic side, 
consider  the natural induction map 
\begin{equation*}
\xymatrix{
q_{-1}:\Bun^{-1}_{B}(\PP^1) \ar[r]^-\sim &  \{\calE \simeq \calO_{\PP^1}(1) \oplus \calO_{\PP^1}, \ell_s   \subset \calO_{\PP^1}, s\in S\}   \subset \Bun_{\PGL(2)}(\PP^1, S).
}
\end{equation*}
Then we show the functor~\eqref{eq: 1} matches the objects
\begin{equation*}
\xymatrix{
\calO_{\tdN_\Delta/\SL(2)}\ar@{|->}[r] &  \Eis_{-1} := q_{-1!} \const{ \Bun^{-1}_B(\PP^1)}[-1].
}\end{equation*}
By applying Wakimoto operators on both sides, it follows that the functor~\eqref{eq: 1} matches all Eisenstein objects $\calO_{\tdN_\Delta/\SL(2)}(n+1)\mapsto \Eis_{n}$, for all $n\in\ZZ$.

\subsubsection{New forms}

With the preceding compatibilities in hand, we are able to readily deduce that 
 the functor~\eqref{eq: 1} is an equivalence. The key idea is to focus on objects that are ``new forms" in that
 they do not come via  induction from two points of tame ramification.

We introduce the full subcategories of ``old forms" as the images
\begin{equation*}
\xymatrix{
C^\old = \langle \Im(\eta^\ell_s), s\in S\rangle \subset  \Coh ( \Loc_{\SL(2)}(\PP^1, S)) 
}
\end{equation*}
\begin{equation*}
\xymatrix{
 \Sh^\old = \langle \Im(\pi^*_s), s\in S\rangle\subset \Sh_!(\Bun_{\PGL(2)}(\PP^1, S))
}
\end{equation*}
and note that the compatibility~\eqref{eq: intro change level} implies the functor~\eqref{eq: 1} maps $C^{\old}$ essentially surjectively to  $\Sh^{\old}$. 

Thus to show that ~\eqref{eq: 1} is essentially surjective, it suffices to show it induces an essentially surjective functor on the quotient categories of new forms
\begin{equation*}
\xymatrix{
C^\new =  \Coh ( \Loc_{\SL(2)}(\PP^1, S))/C^\old 
}
\end{equation*}
\begin{equation*}
\xymatrix{
   \Sh^\new = \Sh_!(\Bun_{\PGL(2)}(\PP^1, S))/\Sh^\old. 
}\end{equation*}

To achieve this, we first check that $C^\new$ and $\Sh^\new$ are generated respectively by the Eisenstein objects $\cO_{\tdN/\SL(2)}(n+1)$ and $\Eis_{n}$ for $n\geq -1$. We do this by an explicit
parameterization of objects on both sides.

Finally, to show the functor~\eqref{eq: 1} is fully faithful, it suffices by evident Wakimoto  symmetries  and continuity to check it induces isomorphisms
\begin{equation*}
\xymatrix{
 \Hom^\bullet_{ \Coh(\Loc_{\SL(2)}(\PP^1, S))}(\calO, \calO_{\tdN_\Delta/\SL(2)}(n+1)) \ar[r]^-\sim & \Hom^\bullet_{ \Sh_!(\Bun_{\PGL(2)}(\PP^1, S))}(\Wh_{S},\Eis_{n}) & n\geq -1.
}
\end{equation*}
For $n\geq 0$, we observe that both sides vanish, and for $n=-1$, both sides are scalars in degree $0$ and the induced map is indeed
an isomorphism.

%
%
%
%
%
%
%
%
%
%
%
%
%
%
%
%
%
%


\subsection{Motivations}

While the results of this paper can be viewed as an instance of the traditional de Rham Geometric Langlands correspondence,
our initial motivations grew out of our interest in the topological Betti Geometric Langlands correspondence.

To recall the rough form of the Betti Geometric Langlands correspondence,  
let $X$ be a smooth projective curve,
 and $S\subset X$ be a finite collection of points.

Let $\Bun_G(X, S)$ denote the moduli of $G$-bundles on $X$ with a $B$-reduction along $S$. 

Let $\Sh_{\calN }(\Bun_{G}(X, S))$ denote the dg category of complexes of sheaves 
with nilpotent singular support on $\Bun_{G}(\PP^1, S)$.

Let $\Loc_{\dG}(X, S)$
denote the moduli of $\dG$-local systems on $X\setminus S$ equipped near $S$ with a $B^\vee$-reduction with  unipotent monodromy.

Let $\Ind\Coh_\calN(\Loc_{\dG}(X, S))$ denote the dg category of ind-coherent sheaves with nilpotent
singular support on $\Loc_{\dG}(X, S)$.

\begin{conj}[Rough form of Betti Geometric Langlands correspondence]
There is an equivalence
\begin{equation}\label{eq: betti conj}
\xymatrix{
 \Ind\Coh_\calN ( \Loc_{\dG}(X, S)) \ar[r]^-\sim & \Sh_\calN(\Bun_{G}(X, S))
}
\end{equation}
compatible with Hecke modifications and parabolic induction.
\end{conj}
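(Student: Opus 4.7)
The plan is to extend the argument proving Theorem~\ref{thm: intro} to general $(G, X, S)$, using the current result (rank-one groups on the three-pointed sphere) as a base case in a joint induction on $g(X)$ and $|S|$. The first ingredient, the spectral action of $\Perf(\Loc_{\dG}(X, S))$ on $\Sh_{\calN}(\Bun_G(X, S))$, should be built exactly as in the main theorem: the Satake symmetries at unramified points extend to a locally constant action and thus factor through chiral homology, producing an action of $\Perf(\Loc_{\dG}(X \setminus S))$; at each ramified point $s \in S$ the affine Hecke category acts, and its Wakimoto subcategory contributes the Springer factor $\Perf(\tdN/\dG)$ via Bezrukavnikov's tame local Langlands; and the two types of action glue via Gaitsgory's central functor. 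The nilpotent singular support condition on the automorphic side is precisely what makes the pointwise Satake action globally locally constant, as in~\cite{NYhecke}.

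With the action in hand, I would propose the Whittaker sheaf $\Wh_S$ as the image of the spectral structure sheaf. In the pair-of-pants case $\Wh_S$ has an explicit description as $i_! j_* \QQ$, exploiting that $\Bun_{\PGL(2)}(\PP^1, S)$ is discrete; for general $(X, S)$ one must instead characterize $\Wh_S$ abstractly by its corepresentability of vanishing cycles along an Eisenstein substack. Compatibility of the resulting functor with forgetting a marked point and with Eisenstein induction \textemdash\ the analogues of diagram~\eqref{eq: intro change level} and the matching $\calO_{\tdN_\Delta/\dG}(n+1) \mapsto \Eis_n$ \textemdash\ then reduces essential surjectivity to generation of the "new forms" quotient by explicit Eisenstein objects, and reduces full faithfulness to Hom computations between the Whittaker sheaf and these Eisenstein objects.

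The hardest step will be carrying through the new-forms analysis once $(g, |S|)$ is outside the pair-of-pants range. In the base case, both sides after killing old forms turn out to be generated by a countable, explicit list of Eisenstein objects, and the relevant Hom spaces reduce to elementary topology on $\Bun_G(\PP^1, S)$; neither simplification survives at higher genus or with more marked points. A natural strategy is to cut $X$ along a collection of circles into pairs of pants (with an auxiliary marked point added on each piece), apply a factorization/gluing principle to both categories to reduce to the base case, and then propagate the equivalence using the compatibilities already established. Making this gluing precise on the spectral side for general reductive $\dG$ \textemdash\ in particular, identifying the correct twisted boundary conditions around the cutting circles with categorical traces over $\Loc_{\dG}$ of a circle \textemdash\ is the essential obstacle, and remains intertwined with the still-conjectural chiral structure on the Betti spectral side; for this reason I would expect the full conjecture to yield only after this gluing formalism is in place, with this paper's base case remaining the only fully verified nontrivial instance in the meantime.
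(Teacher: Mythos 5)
The statement you are addressing is a \emph{conjecture}, and the paper offers no proof of it; it is stated in the Motivations subsection precisely to explain why the authors prove the special case $G=\PGL(2),\SL(2)$, $X=\PP^1$, $\#S=3$. Your text is accordingly not a proof but a restatement of the paper's own two-step program: (1) a ``Verlinde formula'' gluing the automorphic category from atomic building blocks ($\PP^1$ with $0,1,2,3$ marked points), and (2) the correspondence for those building blocks. You correctly identify the main obstruction, but identifying it does not remove it, and several of the steps you describe as routine extensions are themselves open.

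Concretely, the gaps are these. First, your induction on $(g,\#S)$ by cutting $X$ into pairs of pants presupposes that $\Sh_\calN(\Bun_G(X,S))$ is a topological invariant of $(X,S)$ satisfying a factorization/gluing property; the gluing paradigm is established only on the spectral side (\cite{BNglue}), and on the automorphic side it is exactly the conjectural ``Verlinde formula'' --- you cannot use it as an inductive mechanism. Second, even granting gluing, your base case would have to be the pair of pants for \emph{arbitrary} reductive $G$, whereas the paper proves it only in rank one; the new-forms analysis there depends on the discreteness of $\Bun_{\PGL(2)}(\PP^1,S)$, the explicit five-component description of $(\tdN)^{S,\prod=1}$, and elementary Hom computations, none of which survive for higher-rank $G$ or larger $(g,\#S)$ (the paper itself notes that for $\#S\geq 4$ there are continuous moduli and nontrivial global functions on $\Loc$). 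Third, the construction of the spectral action of $\Perf(\Loc_{\dG}(X,S))$ in general rests on the local constancy of Hecke modifications for sheaves with nilpotent singular support, which is deferred to \cite{NYhecke} (in preparation); in this paper it is verified only by hand using the explicit parameterization of points of $\Bun_{\PGL(2)}(\PP^1,S)$. What you have written is a reasonable research outline consistent with the authors' stated strategy, but it does not establish the conjecture, and you should present it as such.
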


Note that $\Loc_{\dG}(X, S)$ and hence the spectral side \eqref{eq: betti conj} depends only on the topological structure of the curve $X$ and not its algebraic structure. Thus the automorphic side of \eqref{eq: betti conj} is also conjecturally a topological invariant, and hence the fiber at $X$  of a locally constant family of categories over the moduli of curves. In particular, it makes sense to  try to 
produce a ``Verlinde formula" calculating   the automorphic side of \eqref{eq: betti conj} by degenerating to the boundary of the moduli of curves
and replacing $X$ with a nodal graph of genus zero curves. Such a gluing paradigm  for the spectral side of~\eqref{eq: betti conj} was established in~\cite{BNglue}.

Thus  the Betti Geometric Langlands correspondence admits the following two-step strategy:

\begin{enumerate}

\item Produce a ``Verlinde formula" describing  the automorphic category $\Sh_\calN(\Bun_{G}(X, S))$ in terms of the atomic building blocks where $X= \PP^1$, and $S$ comprises $0, 1, 2$, or $3$ points.

\item Establish the Betti Geometric Langlands correspondence for  the atomic building blocks where $X= \PP^1$, and $S$ comprises $0, 1, 2$, or $3$ points.\footnote{There is also a non-orientable version for real reductive groups which leads to the additional atomic building blocks
where $``X= \RR\PP^2"$, and $S$ comprises $0$ or $1$ point.}

\end{enumerate}

For $X= \PP^1$, and $S$ comprising $0, 1$, and $2$, the Betti Geometric Langlands correspondence is equivalent via Radon transforms with the derived Satake correspondence and Bezrukavnikov's tame local Langlands correspondence.  
Thus the remaining challenge for step (2) is to 
establish the Betti Geometric Langlands correspondence for the ``pair of pants" where  $X= \PP^1$, and $S$ comprises $3$ points. This was our original motivation for pursuing the results of this paper.

Independently of the above considerations,  the techniques of this paper also have immediate consequences for the 
 Geometric Langlands correspondence when $G= \PGL(2)$, $X=\PP^1$, and $S$ comprises four or more points.
 Note that $\Bun_{\PGL(2)}(\PP^1, S)$ and  $\Loc_{\SL(2)}(\PP^1, S)$ are of dimension  $\#S-3$ and $2(\#S-3)$ respectively,
 and  when $\#S\geq 4$, there are continuous  moduli of objects  within $\Bun_{\PGL(2)}(\PP^1, S)$ and nontrivial global functions on $\Loc_{\SL(2)}(\PP^1, S)$.  The techniques of this paper most directly apply 
 to the expected correspondence between the full subcategory of $\Sh_{\calN}(\Bun_{\PGL(2)}(\PP^1, S))$
 generated by complexes with unipotent monodromies,
 and  the full subcategory of $\Coh_{\calN}(\Loc_{\SL(2)}(\PP^1, S))$ of coherent complexes supported on local systems with
 global unipotent reductions. We hope to expand upon this in a subsequent paper.


\subsection{Conventions}


On the automorphic side, we will work with moduli stacks defined over the complex numbers $\CC$
and sheaves of $\QQ$-modules on them with respect to the classical topology.

Given a stack  $\frX$ over $\CC$, we write $\Sh(\frX)$, respectively $\Sh_c(\frX)$, for the $\QQ$-linear dg category of  complexes,
respectively constructible complexes,
of $\QQ$-modules on $\frX$. When $\frX$ is  locally of finite-type, we write $\Sh_!(\frX)$ for the $\QQ$-linear dg category of constructible complexes
of $\QQ$-modules on $\frX$ that are extensions by zero off of finite-type substacks.
Given an ind-stack $\frX$, we write $\Sh_c(\frX)$ for the $\QQ$-linear dg category of constructible complexes
of $\QQ$-modules on $\frX$ that are extensions by zero off of substacks.

On the spectral side, we will work with coherent sheaves over  moduli stacks defined over $\QQ$.
All of our categories will be stable (=pre-triangluated) $\QQ$-linear dg categories, and all of our functors will be derived.

\subsection{Acknowledgements} 
We thank David Ben-Zvi for sharing his ideas, and Dennis Gaitsgory for pointing out the role of the Whittaker sheaf.

DN is grateful for the support of NSF grant DMS-1502178.
ZY is  grateful for the support of NSF grant DMS-1302071 and the Packard Foundation.


\section{General constructions}\label{prelims}

In this section, we collect  standard structures from the Geometric Langlands program. Most of the materials in this section are known to experts.


\subsection{Group theory}

Let $G$ be a reductive group, $B\subset G$ a Borel subgroup, $N \subset B$ its unipotent radical,
and $T = B/N$ the universal Cartan. 
Let $\calB\simeq G/B$ be the flag variety of $G$.


Let $(\Lambda_T, R^{\vee}_+, \Lambda_T^\vee, R_+)$ be the associated based root datum, where 
$\Lambda_T =\Hom(\GG_m, T)$ is the coweight lattice,
$R^{\vee}_+\subset \Lambda_T$ the positive coroots, $\Lambda_T^\vee = \Hom(T, \GG_m)$ the weight lattice, and 
$R_+\subset \Lambda_T^\vee$ the positive roots.
Let $\Wf$ denote the Weyl group of $G$, and $W^\aff \simeq \Wf\ltimes \Lambda_T$ its affine Weyl group. Let $\rho\in \L^{\vee}_{T}$ (resp. $\rho^{\vee}\in \L_{T}$) be half of the sum of elements in $R_{+}$ (resp. $R^{\vee}_{+}$).


Form the dual based root datum
$(\Lambda_T^\vee, R_+, \Lambda_T, R^{\vee}_+)$, and
construct the Langlands dual group $\dG$, with Borel subgroup $B^\vee\subset \dG$,
unipotent radical $N^\vee\subset B^\vee$,
and dual universal Cartan $T^\vee = B^\vee/N^\vee$. 
Let $\calB^\vee\simeq \dG/B^\vee$ be the flag variety of $\dG$.

 Let $\calN^\vee$ be the nilpotent cone in the Lie algebra $\frg^{\vee}$.
We identify $\calN^\vee $ with the unipotent elements in $\dG$ via the exponential map.
 
Let $\mu:  \tilde \calN^\vee \to \calN^\vee$ be the Springer resolution. Recall that $\tilde \calN^\vee\subset \dG \times \calB^\vee$ classifies pairs $(g,\dB_{1})$
such that the class $g$ lies in the unipotent radical of $\dB_{1}$.  Note the isomorphism of adjoint quotients $N^\vee/B^\vee \simeq \tilde \calN^\vee/\dG$. 


\subsection{Hecke kernels}


\subsubsection{Satake category}

Let $D = D_-= D_+ = \Spec k[[t]]$ be copies of the formal disk, $D^\times  = \Spec k((t)) \subset D, D_-, D_+$ the punctured formal disk, 
and $\DD = D_-  \coprod_{D^\times} D_+$ the non-separated disk with two zeros $0_- \in D_-, 0_+\in D_+$.

Let $\Bun_G(\DD)$ be the moduli of $G$-bundles on $\DD$. 

Introduce the Laurent series loop group $G((t))= \Maps(D^\times, G)$, with its parahoric arc subgroup $G[[t]] = \Maps(D, G)$,
and affine Grassmannian $ \Gr_G = G((t))/G[[t]]$.
The gluing presentation $\DD = D_-\coprod_{D^\times} D_+$ induces a double-coset presentation 
$$
\xymatrix{
\Bun_G(\DD) \simeq G[[t]]\backslash G((t))/G[[t]] \simeq G[[t]]\backslash \Gr_G.
}
$$

Let $\calH^\sph_G = \Sh_c(\Bun_G(\DD))$ be the dg spherical Hecke category  of  constructible complexes on $\Bun_G(\DD)$ with proper support, or equivalently $G[[t]]$-equivariant constructible complexes on $\Gr_G$
with proper support.
Convolution and fusion equips $\calH^\sph_G$ with an $E_3$-monoidal structure which
 preserves 
 the heart $\calH^{\sph}_{G, \heartsuit} \subset \calH^\sph_G $ with respect
to the perverse $t$-structure. The $E_3$-monoidal structure on $\calH^{\sph}_{G, \heartsuit}$ naturally lifts to a symmetric monoidal structure. Though we mention it  for clarity, we will not need 
the $E_3$-monoidal structure on $\calH^\sph_G$ but only the symmetric monoidal structure on $\calH^{\sph}_{G, \heartsuit}$.

The geometric Satake correspondence \cite{MV, Gi} provides a symmetric monoidal equivalence 
\begin{equation}\label{Sat}
\xymatrix{
\Phi^{\sph}: \Rep(\dG) \simeq \calH^{\sph}_{G, \heartsuit, \rho^\vee} 
}
\end{equation}
where $\calH^{\sph}_{G, \heartsuit, \rho^\vee}$ denotes the same monoidal category $\calH^{\sph}_{G, \heartsuit}$
but with its twisted commutativity constraint.
There is also a derived geometric Satake correspondence but we will not need it.

\subsubsection{Affine Hecke category}

Let $\Bun_{G}(\DD, \{0_-, 0_+\})$ be the moduli of  $G$-bundles  on $\DD$ with $B$-reductions  at the points $0_-, 0_+\in \DD$.
The natural projection $\Bun_{G}(\DD, \{0_-, 0_+\})\to \Bun_G(\DD)$ is a $\calB\times \calB$-fibration.

Let $I\subset G[[t]]$ be the Iwahori subgroup given by the inverse image of $B\subset G$ under the evaluation map at $0\in D$,
and  $ \Fl_G = G((t))/I$ the corresponding affine flag variety.
The gluing presentation $\DD = D_- \coprod_{D^\times} D_+$ induces a double-coset presentation 
$$
\xymatrix{
\Bun_{G}(\DD, \{0_-, 0_+\}) \simeq I\backslash G((t))/I \simeq I\backslash \Fl_G.
}
$$

Let $\calH_G^\aff = \Sh_c(\Bun_{G}(\DD, \{0_-, 0_+\}))$  be the dg affine Hecke category
of constructible complexes on $\Bun_{G}(\DD, \{0_-, 0_+\})$  with proper support,
or equivalently $I$-equivariant constructible complexes on $\Fl_G$
with proper support.
Convolution  equips $\calH^\aff_G$ with a monoidal structure.

Recall we write $\mu:\tilde \calN^\vee\to \calN^\vee$ for the Springer resolution,
and identify $\calN^\vee $ with the unipotent elements in $\dG$ via the exponential map.
The Steinberg variety $\St_{\dG}$  is the {\em derived scheme} given by the {\em derived} fiber product
$$
\xymatrix{
\St_{\dG} = \tilde \calN^\vee \times_{\dG} \tilde \calN^\vee.
 }
$$
Passing to adjoint quotients, we have
$$
\xymatrix{
\St_{\dG}/\dG = (\tdN \times_{\dG} \tdN)/\dG
\simeq
 \tdN/\dG\times_{\dG/\dG} \tdN/\dG.
 }
$$

Let $ \Coh^{\dG}(\St_\dG)$ be the dg derived category of  coherent complexes
on $\St_{\dG} /\dG$, or equivalently
$\dG$-equivariant  coherent complexes
on $\St_{\dG}$.
Convolution  equips it with a monoidal structure.

Bezrukavnikov's theorem \cite[Theorem 1]{B} provides a monoidal equivalence
\begin{equation}\label{Bez}
\xymatrix{
\Phi^{\aff}: \Coh^{\dG}(\St_{\dG})\ar[r]^-\sim & \calH_G^\aff.
}
\end{equation}


\begin{ex}[Wakimoto sheaves, see {\cite[Section 3.3]{B}}]\label{ex:Waki}
For $\l\in\L_{T}  = \{1\} \ltimes \Lambda_T \subset \Wf\ltimes \Lambda_T =  W^\aff$, we have the $\dG$-equivariant line bundle $\cO_{\calB^{\vee}}(\l)$ on the flag variety $\calB^{\vee} = \dG/B^\vee$. It pulls back under the natural projection
$\pi:  \tdN\to \cB^{\vee}$ to a $\dG$-equivariant line bundle $\cO_{\tdN}(\l) = \pi^*\cO_{\calB^\vee}(\l)$. 

Let $\Delta:\tdN\to \St_{\dG}$ be the diagonal map. Under the equivalence $\Phi^{\aff}$, the coherent sheaf $\D_{*}\cO_{\tdN}(\l)$ corresponds to the {\em Wakimoto sheaf} $J_{\l}$, which can be explicitly constructed as follows. 
Let $j_\l: \Fl_G^{\l}\incl \Fl_G $ be the $I$-orbit indexed by $\l$ regarded in $W^{\aff}$. When $\l$ is dominant, 
$J_{\l}\simeq j_{\l*}\QQ[\jiao{2\rho, \l}]$; when $\l$ is anti-dominant, $J_{\l}\simeq j_{\l!}\QQ[\jiao{2\rho, -\l}]\simeq \DD_{\Fl_G}  \iota J_{-\l}$, where $\iota$ denotes the involution of $I\backslash \Fl_G$ induced by the inverse of $G$.
In general, writing $\l$ as $\l_{1}-\l_{2}$ where $\l_{1}$ and $\l_{2}$ are both dominant, we have 
$J_{\l}\simeq J_{\l_{1}}J_{-\l_{2}}$ independently of the expression of $\l$ as $\l_{1}-\l_{2}$. 
One can check geometrically that the assignment $\l\mapsto J_{\l}$ gives a map of monoids $\L_{T}\to \cH^{\aff}_{G}$.
\end{ex}




\begin{ex}[$G=\SL(2)$]\label{ex:aff hecke for sl2}
The affine Weyl group $W^{\aff}$ can be identified with the infinite dihedral group acting on the real line $\RR$ with fundamental domain $[0,1]$. For $x\in\ZZ$, let $r_{x}\in W^{\aff}$ be the reflection with center $x$, then $W^{\aff}=\jiao{r_{0},r_{1}}$.   

Correspondingly there are two standard monoidal generators $T_{0*}, T_{1*}$ for $\cH^{\aff}$ given by the $*$-extensions of $\const{\Fl^{r_{0}}}[1]$ and $\const{\Fl^{r_{1}}}[1]$. Similarly define $T_{0!}$ and $T_{1!}$ using $!$-extensions instead of $*$-extensions. Then we have monoidal inverses $T_{0*}^{-1} \simeq T_{0!}$, $T_{1*}^{-1} \simeq T_{1!}$.

For $k\in \ZZ$, the Wakimoto sheaf can be expressed as
$
J_{2k} \simeq (T_{0*} T_{1*})^k
$
which corresponds under $\Phi^{\aff}$ to the twist of the
structure sheaf of the relative diagonal  $\calO_{\tdN}(2k)$.

The  finite braid operator  $T_{0*}$ corresponds under $\Phi^{\aff}$ to the {\em classical} structure sheaf  $\calO_{\St_{\dG}}^{cl}$.
Its inverse $T_{0*}^{-1} \simeq T_{0!}$ corresponds to the twist $\calO_{\St_{\dG}}^{cl}(-1,-1)$. 

The affine braid operator $T_{1*}  $ corresponds under $\Phi^{\aff}$ to the twisted classical structure sheaf  $\calO_{\St_{\dG}}^{cl}(-1, 1)$.
Its inverse $T_{1*}^{-1} \simeq T_{1!}$ corresponds to the twist   $\calO_{\St_{\dG}}^{cl}(-2, 0)$.
The conjugate $T_{0*} T_{1*} T_{0*}^{-1} \simeq J_2 T_{1*} J_2^{-1}$ corresponds 
to the twist  $\calO_{\St_{\dG}}^{cl}(1, -1)$, and its inverse $T_{0*} T_{1!} T_{0*}^{-1}\simeq J_2 T_{1!} J_2^{-1}$
to the twist  $\calO_{\St_{\dG}}^{cl}(0, -2)$.

Let $\Avg$ be the IC-sheaf of the closure of $\Fl^{r_{0}}$. Then $\Avg$ corresponds under $\Phi^{\aff}$ to $\calO_{\PP^1 \times \PP^1}(-1, -1)$.
The natural  distinguished triangles 
$$
\xymatrix{
\Avg \ar[r] & T_{0*} \ar[r] & \delta
&
\delta \ar[r] & T_{0!} \ar[r] & \Avg 
}
$$
correspond to the natural  distinguished triangles 
$$
\xymatrix{
\calO_{\PP^1\times \PP^1}(-1, -1) \ar[r] & \calO_{\St_{\dG}}^{cl} \ar[r] & \calO_{\tilde \calN^\vee}
&
 \calO_{\tilde \calN^\vee} \ar[r] & \calO_{\St_{\dG}}^{cl} (-1, -1) \ar[r] & \calO_{\PP^1 \times \PP^1}(-1, -1)
}
$$

\end{ex}

\begin{ex}[$G=\PGL(2)$]\label{ex:aff hecke for pgl2}
The morphism $\SL(2)\to \PGL(2)$ induces a canonical monoidal functor $\cH^{\aff}_{\SL(2)}\to \cH^{\aff}_{\PGL(2)}$. We use the same notation introduced in Example~\ref{ex:aff hecke for sl2} for objects in $\cH^{\aff}_{\SL(2)}$ to denote their images in $\cH^{\aff}_{\PGL(2)}$. The description of $\Phi^{\aff}(\cF)$ for $\cF\in \cH^{\aff}_{\SL(2)}$ given in Example \ref{ex:aff hecke for sl2} is still valid in the case of $\PGL(2)$ for the same-named sheaf $\cF$ but viewed as in $\cH^{\aff}_{\PGL(2)}$ (note that the Steinberg variety is the same for $\SL(2)$ and $\PGL(2)$).

Now $W^{\aff}$ can be identified with the infinite dihedral group generated by $r_{0}$ and $r_{1/2}$ (reflection with center $1/2$).  Correspondingly, $\cH^{\aff}_{G}$ two standard monoidal generators $T_{0*}$ and $T_{1/2}$, where $T_{1/2}$, the {\em Atkin-Lehner involution}, is the skyscraper sheaf of the point $I$-orbit $\Fl^{r_{1/2}}$, and $T_{1/2}^{-1} \simeq T_{1/2}$.


For $k\in \ZZ$,  the Wakimoto sheaf can be expressed as
$
J_k \simeq (T_{0*} T_{1/2})^k
$
which corresponds to the twist of the
structure sheaf of the relative diagonal  $\calO_{\tilde \calN^\vee}(k)$.


The Atkin-Lehner involution $T_{1/2}$  corresponds under $\Phi^{\aff}$ to the twisted classical structure sheaf   $\calO_{\St_{\dG}}^{cl}(-1, 0)$. The conjugate $T_{0*} T_{1/2} T_{0*}^{-1} \simeq J_{1} T_{1/2} J_1^{-1}$ corresponds 
to  the twist $\calO_{\St_{\dG}}^{cl}(0, -1)$.


\end{ex}


\subsubsection{Compatibilty}

Gaitsgory's nearby cycles construction provides a central functor
$$
\xymatrix{
Z:\calH^{\sph}_{G} \ar[r] & \calH_G^\aff. 
}
$$
Under the Satake equivalence \eqref{Sat} and Bezrukavnikov's equivalence $\Phi^{\aff}$, the central functor becomes the natural functor
$$
\xymatrix{
\Rep (\dG) \ar[r] & \Coh^{\dG}(\St_{\dG})
}
$$
given by pullback along the projection $\St_{\dG}/\dG \to \pt/\dG$.  Its monodromy automorphism corresponds to the universal unipotent automorphism of the pullback.


\subsubsection{Finite and aspherical Hecke categories} 
Let $\cH^{f}_{G}=\Sh_{c}(B\bs G/B)$ be the finite Hecke category of $B$-equivariant constructible complexes on 
the flag variety $\cB= G/B$, with monoidal structure defined by convolution.  
Pushforward along the closed embedding $\cB=G/B\incl G((t))/I = \Fl_G$ gives a fully fiathful monoidal functor $\cH^{f}_{G}\to \cH^{\aff}_{G}$.

Let $\Xi\in \Perv_{N}(\cB) \subset \Sh_c( G/B)$ be the tilting extension of the shifted constant sheaf $\const{\cB^{w_0}}[\dim \cB]$ on the  open $N$-orbit  $\cB^{w_0}\subset \cB$. 
Equivalently, in the abelian category $\Perv_{N}(\cB)$,
it is also the projective cover of the skyscraper sheaf on the closed $N$-orbit.  

Consider the functor 
\begin{equation*}
\xymatrix{
\VV= \Hom_{\Sh_c(\cB)}(\Xi, q^*(-))
:     \cH^{f}_{G}\ar[r] &   \Vect 
}
\end{equation*}
where we first forget $B$-equivariance via the pullback $q^*:\cH^{f}_{G} \to\Sh_c(\cB)$ 
along $q:G/B \to B\backslash G/B$.

The functor $\VV$ calculates the vanishing cycles at a generic covector at the closed $N$-orbit.
It is the universal quotient of $\calH^f_G$ with kernel the full monoidal ideal $\jiao{ \IC_w \vert w\not = 1\in \Wf}$
generated by $\IC$-sheaves of  $N$-orbits  $\cB^{w}\subset \cB$, for $w\not = 1\in \Wf$, that are not closed. 
It can be equipped with a monoidal structure (for the usual tensor product on $\Vect$).

The aspherical affine Hecke category is defined to be the tensor product
\begin{equation*}
\cH^{\asp}_{G}:=\cH^{\aff}_{G}\otimes_{\cH^{f}_{G}}\Vect
\end{equation*}
where the $\cH^{f}_{G}$-module structure on $\Vect$ is given by $\VV$.
It has a natural $\cH^{\aff}_{G}$-module structure via convolution on the left.

When the base field has positive characteristic, Bezrukavnikov~\cite{B} realizes $\cH^{\asp}_{G}$ as the dg category of Iwahori-Whittaker sheaves on the affine flag variety with the help of an Artin-Schreier sheaf. By \cite[Theorem 2]{B}, there is an equivalence of dg categories
\begin{equation}\label{Phi asp}
\xymatrix{ \Phi^{\asp}:  \Perf(\tdN/\dG)=\Coh^{\dG}(\tdN)\ar[r]^-\sim & \cH^{\asp}_{G}.
}
\end{equation}
Moreover, the  $\cH^{\aff}_{G}$-action on the left hand side gets intertwined with the $\Coh^{\dG}(\St_{\dG})$-action on the right hand side by left convolution via the equivalence $\Phi^{\aff}$. 

The above equivalence also holds when the base field is $\CC$. One way to see this is to work with $D$-modules (where the exponential $D$-module plays the role of an Artin-Schreier sheaf) to obtain an equivalence between the $\CC$-linearizations of the two sides of \eqref{Phi asp}, and then descend it to $\QQ$. Another way is to use a $\Gm$-averaged version of an Artin-Schreier sheaf, as we do when introducing the Whittaker sheaf in Section~\ref{sss:Gm AS} .


\subsection{Hecke modifications}\label{ss:Hk mod}

Let $X$ be a connected smooth projective curve of genus $g$, and $S\subset X$ a finite subset.

Let $\Bun_G(X, S)$ be the moduli stack of $G$-bundles on $X$ with $B$-reductions at $S$. This is an algebraic stack locally of finite type. Later we will focus on the case $G=\PGL(2)$ and $\SL(2)$. For more concrete modular interpretations of $\Bun_{G}(X,S)$ in these cases, see Section~\ref{ss:mod bun}.

Let $\Sh(\Bun_G(X, S))$ be the dg derived category of all complexes on $\Bun_G(X, S)$.
We will abuse terminology and use the term sheaves
to refer to its objects.

Let $\Sh_{!}(\Bun_G(X, S)) \subset \Sh(\Bun_G(X, S))$ be the full dg subcategory of constructible complexes
 that are extensions by zero off of finite-type substacks. 

Introduce copies of the curve $X = X_- = X_+$, and for any $x\in X$, introduce the non-separated curve 
\begin{equation*}
\xymatrix{
\XX_x = X_- \coprod_{ X \setminus \{x\}} X_+
}
\end{equation*}
with the two distinguished points $x_-\in X_-$, $x_+\in X_+$, and the natural embeddings
\begin{equation*}
\xymatrix{
&  \XX_x &  \\
X_- \ar[ur]^-{i_-}  & \DD_x \ar[u]_-j &\ar[ul]_-{i_+}  X_+  }
\end{equation*}
where $\DD_x = D_{x_-} \coprod_{D^\times_x} D_{x_+}$ is the formal neighborhood of $\{x_- ,x_+\} \subset X$.
Note that for the choice of a local coordinate, we can identify $\DD_x$ with the standard model  $\DD$.


\subsubsection{Spherical Hecke action}

For $x\in X \setminus S$, we may define the moduli stack $\Bun_{G}(\XX_{x}, S)$ of $G$-bundles on $\XX_{x}$ with $B$-reductions at $S$. We have a diagram 
\begin{equation*}
\xymatrix{
& \ar[dl]_-{p_-} \Bun_G(\XX_x, S) \ar[dr]^-{p_+} \ar[d]^-\kappa& \\
\Bun_G(X, S)  & \Bun_G(\DD_x) &   \Bun_G(X, S)  }
\end{equation*}

Passing to sheaves, and choosing a local coordinate to identify $\Bun_{G}(\DD_{x})$ with $G[[t]]\bs G((t))/G[[t]]$, one obtains the spherical Hecke modifications 
\begin{equation*}
\xymatrix{
\Hecke^{\sph}_x: \calH_G^\sph
 \otimes  \Sh(\Bun_G(X, S))
 \ar[r] & 
\Sh(\Bun_G(X, S))}
\end{equation*}
\begin{equation*}
\xymatrix{
\Hecke^{\sph}_x(\calK, \calF) = (p_{+})_!((p_{-})^*\calF \otimes \kappa^*(\calK)). 
}
\end{equation*}
It evidently preserves
the full dg subcategory $\Sh_{!}(\Bun_G(X, S)) \subset \Sh(\Bun_G(X, S))$.

Natural generalizations of the above constructions provide $\Sh(\Bun_G(X, S))$ the requisite coherences of 
an $ \calH_G^\sph$-module.

Restricting to the heart of $\cH^{\sph}_{G}$, one obtains a tensor action 
\begin{equation*}
\xymatrix{
\Rep(\dG) 
 \otimes  \Sh(\Bun_G(X, S))
 \ar[r] & 
\Sh(\Bun_G(X, S)).
}
\end{equation*}

\begin{remark}
It is straightforward to generalize  the above from a point $x\in X\setminus S$ to a family of points $Y \subset X \setminus S$ 
to obtain a functor
\begin{equation*}
\xymatrix{
\Hecke^{\sph}_Y: \calH_G^\sph
 \otimes  \Sh(\Bun_G(X, S))
 \ar[r] & 
\Sh(\Bun_G(X, S) \times Y).
}
\end{equation*}

\end{remark}


\subsubsection{Affine Hecke action}

For $s\in S$, let $S_{\pm}=S\coprod_{S\setminus\{s\}}S\subset \XX_{s}$. We may similarly define the moduli stack $\Bun_G(\XX_s, S_{\pm})$ of $G$-bundles on $\XX_{s}$ with $B$-reductions at $S_{\pm}$, and obtain a diagram 
\begin{equation}\label{eq: aff hecke diag}
\xymatrix{
& \ar[dl]_-{p_-} \Bun_G(\XX_s, S_{\pm}) \ar[dr]^-{p_+} \ar[d]^-\kappa& \\
\Bun_G(X, S)  & \Bun_G(\DD_s, \{s_{-}, s_{+}\}) &   \Bun_G(X, S)  
}
\end{equation}

Passing to sheaves, and choosing a local coordinate to identify $\Bun_G(\DD_s, \{s_{-}, s_{+}\})$ with $I\bs G((t))/I$, one obtains the affine Hecke modifications 
\begin{equation}\label{eq: aff hecke action}
\xymatrix{
\Hecke^{\aff}_s:\calH_G^\aff
 \otimes  \Sh(\Bun_G(X, S))
 \ar[r] & 
\Sh(\Bun_G(X, S))}
\end{equation}

\begin{equation*}
\xymatrix{
\Hecke^{\aff}_s(\calK, \calF) = (p_{+})_!((p_{-})^*\calF \otimes \kappa^*(\calK)).
}
\end{equation*}
More often, we will use the binary notation $\star_{s}$ to denote the affine Hecke action
\begin{equation*}
\cK\star_{s}\cF:=\Hecke^{\aff}_s(\calK, \calF).
\end{equation*}
It evidently preserves
the full dg subcategory $\Sh_!(\Bun_G(X, S)) \subset \Sh(\Bun_G(X, S))$.

Natural generalizations of the above constructions provide $\Sh(\Bun_G(X, S))$ the requisite coherences of an $ \calH_G^\aff$-module
structure. For different $s\in S$, the resulting $ \calH_G^\aff$-actions on $\Sh(\Bun_G(X, S))$ commute with each other.

In particular, restricting the action of $\cH^{\aff}_{G}$ to $\Perf(\tdN/\dG)$ via the monoidal functor
\begin{equation}\label{eq:Waki to Haff}
\xymatrix{ \Perf(\tdN/\dG) \ar[r]^{\D_{*}} &\Coh^{\dG}(\St_{\dG})\ar[r]^-{\Phi^{\aff}} & \cH^{\aff}_{G}}
\end{equation}
where $\D: \tdN/\dG\to \St_{\dG}/\dG$ is the diagonal map,  one obtains commuting tensor actions 
\begin{equation}\label{eq:PerfN S action}
\xymatrix{
\Perf(\tdN/\dG)^{\otimes S}
\otimes  \Sh(\Bun_G(X, S))
\ar[r] & \Sh(\Bun_G(X, S))
}
\end{equation}


\subsubsection{Compatibility}

For $s\in S$, with punctured disk $D^\times_s \subset X$,
there is a natural equivalence of actions
\begin{equation}\label{eq:Z comp}
\xymatrix{
\Hecke_{s}^\aff  \circ Z
\simeq 
\Psi_s \circ \Hecke_{D^\times_s}^\sph
}
\end{equation}
where $\Psi_{s}: \Sh(\Bun_G(X, S) \times D^\times_s) \to \Sh(\Bun_G(X, S))$ denotes nearby cycles towards the central fiber of $\Bun_{G}(X,S)\times D_{s}\to D_{s}$.
Moreover, the monodromy of the central functor $Z$ coincides with the monodromy of $\Psi_{s}$.


\subsection{Eisenstein series}

Consider the induction diagram
\begin{equation}\label{B ind all}
\xymatrix{
\Bun_T(X) & \ar[l]_-p \Bun_B(X)  \ar[r]^-q & \Bun_{G}(X, S) 
}\end{equation}
where $p$ is the usual projection, and $q$ assigns to a $B$-bundle the induced $G$-bundle with its given $B$-reduction
remembered along $S$. Since $\Bun_{T}(X)\simeq \L_{T}\otimes_{\ZZ}\Pic(X)$, for each $\l\in \L_{T}$ we have a corresponding component $\Bun^{\l}_{T}(X)$ of $\Bun_{T}(X)$. Let $\Bun^{\l}_{B}(X)$ be the preimage of $\Bun^{\l}_{T}(X)$ under $p$. Restricting the diagram \eqref{B ind all} to the $\l$-component we get
\begin{equation}\label{B ind lam}
\xymatrix{
\Bun^{\l}_T(X) & \ar[l]_-{p_{\l}} \Bun_B^{\l}(X)  \ar[r]^-{q_{\l}} & \Bun_{G}(X, S) 
}\end{equation}

\begin{ex}[$G=\PGL(2)$] In this case, $T=\Gm$, with $\Lambda_T \simeq \ZZ$, and therefore  $\Bun_T(X)\simeq \Pic(X)$. An object of $\Bun_B(X)$ is an inclusion $(\calL \subset \calE)$ of a line bundle into a rank 2 vector bundle on $X$ up to simultaneous tensoring with a line bundle. Then $p$ is given by $(\cL\subset\cE)\mapsto\calL^{\otimes 2} \otimes (\det \calE)^{-1}$, and $q$ is given by $(\cL\subset\cE)\mapsto(\cE, \calL|_S \subset \calE|_{S})$.
An object $(\calL \subset \calE)\in \Bun_{B}(X)$ lies in the component $\Bun^{n}_{B}(X)$
 if and only if $2\deg(\calL) - \deg(\calE)=n$. 


\end{ex}

\begin{defn} For $\l\in\L_{T}$, we define the (unipotent) {\em Eisenstein sheaf} to be
\begin{equation*}
\xymatrix{
\Eis_\l = q_{\l!} \const{\Bun^\l_B(X)}[\dim B\cdot (g-1)-\jiao{2\rho, \l}] \in \Sh_{!}(\Bun_{G}(X,S)).
}
\end{equation*}
Note that the shift $\dim B\cdot (g-1)-\jiao{2\rho, \l}$ is the dimension of $\Bun^{\l}_{B}(X)$.
\end{defn}

\begin{ex}[$X=\PP^{1}, \l=0$]\label{ex:Eis 0} In this case, using that $\upH^{1}(\PP^{1},\cO_{\PP^{1}})=0$, we see that $\Bun^{0}_{B}(\PP^{1})\simeq \pt/B$. The map $q_0:\Bun^{0}_{B}(\PP^{1})\to \Bun_{G}(\PP^{1},S)$ is an isomorphism to its image, which is the point classifying the trivial $G$-bundle over $\PP^{1}$ with the same $B$-reduction at all $s\in S$. The
Eisenstein series sheaf 
 $\Eis_{0}$ is the constant sheaf $\QQ[-\dim B]$ on this point extended by zero. 
\end{ex}

The next lemma shows that the Eisenstein sheaves are translated by Wakimoto sheaves.

\begin{lemma}\label{l:J on Eis}
For $\l,\mu\in \L_{T}$, $s\in S$, there is a canonical isomorphism
\begin{equation*}
J_{\mu}\star_{s}\Eis_{\l}\simeq  \Eis_{\mu+\l}.
\end{equation*} 
\end{lemma}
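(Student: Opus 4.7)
The plan is to reduce to a tractable case for $\mu$ and then apply base change along the affine Hecke correspondence at $s$.

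\emph{Reduction.} The multiplicativity $J_{\mu_1+\mu_2}\simeq J_{\mu_1}J_{\mu_2}$ recorded at the end of Example~\ref{ex:Waki} shows that the Wakimoto sheaves form a group under convolution, with $J_\mu^{-1}\simeq J_{-\mu}$. Hence, if the identity $J_\mu\star_s\Eis_\l\simeq\Eis_{\mu+\l}$ is established for all antidominant $\mu$, then applying $J_\mu\star_s(-)$ to the antidominant statement $J_{-\mu}\star_s\Eis_{\mu+\l}\simeq\Eis_\l$ gives the dominant case, and any $\mu$, written as a sum of a dominant and an antidominant coweight, then follows by iterating. It thus suffices to treat $\mu$ antidominant.

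\emph{Base change.} For $\mu$ antidominant, $J_\mu\simeq(j_\mu)_!\QQ[\jiao{2\rho,-\mu}]$, so $\kappa^*J_\mu$ is the $!$-extension of the shifted constant sheaf from the locally closed substack $\Hk^\mu\subset\Bun_G(\XX_s,S_\pm)$ of Hecke modifications of pure translation type $\mu$ at $s$. Combining projection formula and proper base change in the Cartesian square
$$\xymatrix{
E^\mu_\l\ar[r]^-{\tilde p_-}\ar[d]^-{\tilde q_\l} & \Bun_B^\l(X)\ar[d]^-{q_\l}\\
\Hk^\mu\ar[r]^-{p_-|_{\Hk^\mu}} & \Bun_G(X,S)
}$$
where $E^\mu_\l:=\Bun_B^\l(X)\times_{\Bun_G(X,S)}\Hk^\mu$, one obtains
$$J_\mu\star_s\Eis_\l\;\simeq\;(p_+|_{\Hk^\mu}\circ\tilde q_\l)_!\,\const{E^\mu_\l}\bigl[\dim\Bun_B^\l(X)+\jiao{2\rho,-\mu}\bigr].$$

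\emph{Geometric identification.} The crucial point is that because $\mu\in\L_T\subset W^{\aff}$ is a pure translation, $\mu(t)\in T((t))$ normalizes $B$. Consequently, a Hecke modification by $\mu$ of an induced $G$-bundle at $s$ preserves the $B$-structure, and hence is itself induced, from the $B$-bundle obtained by tensoring the $T$-part by $\cO(\mu\cdot s)$. A careful analysis then identifies $E^\mu_\l$ with $\Bun_B^{\l+\mu}(X)$ via this tensor shift, under which the composite $p_+|_{\Hk^\mu}\circ\tilde q_\l$ becomes the induction map $q_{\l+\mu}$. The dimensions match:
$$\dim E^\mu_\l=\dim\Bun_B^\l(X)+\ell(\mu)=\dim\Bun_B^\l(X)+\jiao{2\rho,-\mu}=\dim\Bun_B^{\l+\mu}(X).$$

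\emph{Conclusion and main obstacle.} Substituting this identification yields
$$J_\mu\star_s\Eis_\l\;\simeq\;(q_{\l+\mu})_!\,\const{\Bun_B^{\l+\mu}(X)}[\dim\Bun_B^{\l+\mu}(X)]\;=\;\Eis_{\l+\mu},$$
as required. The main obstacle is justifying the geometric identification $E^\mu_\l\simeq\Bun_B^{\l+\mu}(X)$ and the recognition of the structure map as $q_{\l+\mu}$: this requires a careful local-to-global analysis of how an affine Hecke modification of pure translation type interacts with the global $B$-reduction coming from parabolic induction, accounting in particular for how the $\ell(\mu)$-dimensional family of choices in $\Fl^\mu$ matches the extra dimensions of $\Bun_B^{\l+\mu}(X)$ relative to $\Bun_B^\l(X)$.
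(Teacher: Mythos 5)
Your strategy coincides with the paper's own: reduce to $\mu$ antidominant via the group law $J_{\mu_1+\mu_2}\simeq J_{\mu_1}J_{\mu_2}$, base change along the Hecke correspondence restricted to the stratum of relative position exactly $\mu$, and identify the resulting fiber product with $\Bun_B^{\l+\mu}(X)$ compatibly with the induction maps. The reduction, the base-change computation, and the dimension count are all correct. The problem is that the step you defer as ``the main obstacle'' --- the identification $E^\mu_\l\simeq\Bun_B^{\l+\mu}(X)$ under which $p_+|_{\Hk^\mu}\circ\tilde q_\l$ becomes $q_{\l+\mu}$ --- is the entire mathematical content of the lemma, and invoking ``a careful analysis'' is not a proof. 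The heuristic you give (that $\mu(t)$ normalizes $B$, so the modification preserves the $B$-structure) explains why a translation-type modification of an induced bundle is again induced, but it does not by itself show that the modifications lying in the open $I$-orbit $\Fl^\mu$ (as opposed to its closure) are parametrized precisely by $\Bun_B^{\l+\mu}(X)$; matching the $\jiao{2\rho,-\mu}$-dimensional family of choices in $\Fl^\mu$ with the extra moduli of $\Bun_B^{\l+\mu}(X)$ is exactly what has to be checked.

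The paper closes this gap in two steps. First it constructs a canonical morphism $b:\Bun_B^{\l+\mu}(X)\to\Bun_B^{\l}(X)$ by a Tannakian ``pushout'' for filtered bundles: a point of $\Bun_B(X)$ over $\cL\in\Bun_T(X)$ is a tensor functor $V\mapsto\cE_V$ with weight filtrations $F^\b\cE_V$ and identifications $\Gr_F^\b\cE_V\simeq\cL_\b^{\oplus\dim V(\b)}$, and one modifies the associated graded by tensoring with the line bundles $\b\mapsto\cO_X(\jiao{\b,-\mu}\cdot s)$; the antidominance of $\mu$ guarantees the inclusions $\cO_X(\jiao{\b,-\mu}\cdot s)\incl\cO_X(\jiao{\b',-\mu}\cdot s)$ for $\b\le\b'$ needed for the pushout to exist. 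Second, it checks (by a local computation at $s$, spelled out explicitly for $\PGL(2)$ in the example following the proof: the saturation condition $\cE_{-1}(s)\ne\cE_1$ cuts out exactly the open orbit, and the chain of modifications is then recovered from $\cL\subset\cE_n$) that the fiber product is homeomorphic to $\Bun_B^{\l+\mu}(X)$ over $\Bun_G(X,S)$. To make your argument complete you would need to supply this construction, or an equivalent one.
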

\begin{proof}
We first treat the case when $\mu$ is anti-dominant. To make notation more convenient, let $\mu$ be dominant and consider the action of $J_{-\mu}$ on $\Eis_{\l}$. By definition, $J_{-\mu}$ is the $!$-extension of the constant sheaf $\QQ[\jiao{2\rho, \mu}]$ on $\Fl^{-\mu}_G$. Unravelling the definitions, in particular of the action~\eqref{eq: aff hecke action}, we may describe the Hecke operator $J_{-\mu}\star_{s}$ using the Hecke correspondence 
\begin{equation}\label{Ga n}
\xymatrix{\Bun_{G}(X,S) & \Gamma_{-\mu} \ar[l]_-{{p_-}}\ar[r]^-{{p_+}}  & \Bun_{G}(X,S)}
\end{equation}
given by the subdiagram of \eqref{eq: aff hecke diag} where $\Gamma_{-\mu} \subset  \Bun_G(\XX_s, S)$ classifies pairs of points in $\Bun_{G}(X,S)$ with relative position $-\mu$ at the point~$s$. By definition, we have
\begin{equation}\label{J -mu action}
\xymatrix{
J_{-\mu}\star_{s}\cF={ p}_{+!}{p_-}^{*}\cF[\jiao{2\rho,\mu}], & \textup{ for } \cF\in \Sh(\Bun_{G}(X,S)) 
}\end{equation}

We first assume the following
\begin{claim}
We have a commutative diagram
\begin{equation*}
\xymatrix{  \Bun^{\l}_{B}(X)\ar[d]^{q_{\l}}     &   {}^{\l}\Gamma'_{-\mu}\ar[l]_-{{\gamma_-}}\ar[r]^-{{\gamma_+}}\ar[d]^{h} & \Bun^{\l-\mu}_{B}(X) \ar[d]^{q_{\l-\mu}} \\
\Bun_{G}(X,S) & \Gamma_{-\mu} \ar[l]_-{{p_-}}\ar[r]^-{{p_+}}  & \Bun_{G}(X,S)}
\end{equation*}
with the left square Cartesian and $\gamma_+$ a homeomorphism. 
\end{claim}

From the claim and \eqref{J -mu action}, we can conclude
\begin{eqnarray}
\notag J_{-\mu}\star_{s}\Eis_{\l}&=&{p}_{+!}{p_-}^{*}q_{\l,!}\QQ[\dim B\cdot (g-1)-\jiao{2\rho,\l}][\jiao{2\rho,\mu}]\\
\notag &\simeq& {p_+}_{!}h_{!}\QQ[\dim B\cdot (g-1)-\jiao{2\rho,\l-\mu}]\\
\notag &\simeq & q_{\l-\mu !}{\gamma}_{+!}\QQ[\dim B\cdot (g-1)-\jiao{2\rho,\l-\mu}]\\
\label{J neg}&\simeq & \Eis_{\l-\mu}
\end{eqnarray}
This proves the lemma for $\mu$ anti-dominant.  

Since $J_{\mu}\star_{s}$ is the inverse to $J_{-\mu}\star_{s}$, from \eqref{J neg} we obtain
\begin{equation}\label{J pos}
\xymatrix{
J_{\mu}\star_{s}\Eis_{\l'}\simeq \Eis_{\mu+\l'}  &  \textup{ for } \mu \textup{ dominant, } \l'\in\L_{T}
}\end{equation}

Finally, for general $\mu$, write $\mu$ as $\mu_{1}-\mu_{2}$ where $\mu_{1},\mu_{2}$ are both dominant. Using \eqref{J neg} and \eqref{J pos}, we conclude
\begin{equation*}
J_{\mu}\star_{s}\Eis_{\l}\simeq J_{\mu_{1}}\star_{s}(J_{-\mu_{2}}\star_{s}\Eis_{\l})\simeq J_{\mu_{1}}\star_{s}\Eis_{\l-\mu_{2}}\simeq \Eis_{\l-\mu_{2}+\mu_{1}}=\Eis_{\l+\mu}
\end{equation*}

Now to prove the lemma, it remains to prove the claim. With the choice of $s\in S$, we claim there is  a canonical  morphism
\begin{equation}\label{Bun B add mu}
\xymatrix{
b_{\mu}: \Bun^{\l-\mu}_{B}(X)\ar[r] &  \Bun^{\l}_{B}(X)
}
\end{equation}
Once  this is in hand, a local calculation shows there is a homeomorphism
\begin{equation*}
\xymatrix{
\gamma_+: {}^{\l}\Gamma'_{-\mu} := \Bun^{\l}_{B}(X)\times_{\Bun_{G}(X,S)}\Gamma_{-\mu}
\ar[r] &  \Bun^{\l-\mu}_{B}(X)
}
\end{equation*}
respecting the maps to $\Bun_G(X, S)$.

Thus it remains construct the map~\eqref{Bun B add mu}.  

First, recall the following ``pushout'' construction for filtered vector bundles. Suppose $\cE$ is a vector bundle over $X$ with a finite decreasing filtration $\{F^{i}\cE\}_{i\in \L}$ by subbundles indexed by $i$ in some poset $\L$. Let $i\mapsto \cL_{i}$ be a functor $\L\to \Pic(X)^{\incl}$, where $\Pic(X)^{\incl}$ is the category of line bundles on $X$ with injective sheaf maps as morphisms.  Then there is a canonical vector bundle $\cE'$ equipped with a decreasing filtration $\{F^{i}\cE'\}_{i\in \L}$ such that
\begin{equation*}
\xymatrix{
\Gr^{i}_{F}\cE'\simeq \Gr^{i}_{F}\cE\otimes\cL_{i} &  \textup{ for all } i\in\L.
}
\end{equation*}
The construction is by induction on the number of steps in the filtration, and we omit the details.

Next,  the fiber of the natural projection $\Bun_{B}(X)\to \Bun_{T}(X)$ above a point 
$\calL\in \Bun_T(X)$ classifies the  following data:
\begin{itemize}
\item A tensor functor $\calE:\Rep(G)\to \Vect(X)$ (the tensor category of vector bundles on $X$) denoted by $V\mapsto \cE_{V}$.
\item For $V\in \Rep(G)$, a decreasing filtration $\{F^{\b}\cE_{V}\}_{\b\in\L^{\vee}_{T}}$ indexed by the 
poset
$\L^{\vee}_{T}$ (where $\b\le\b' \in \Lambda_T^\vee$ iff $\b'-\b$ is a $\ZZ_{\ge0}$-combination of simple roots), along with isomorphisms $\Gr^{\b}_{F}\cE_{V}\simeq\cL^{\oplus \dim V(\b)}_{\b}$ (where $V(\b)$ denotes the $\b$-weight space of $V$,  and $\cL_{\b} \in \Pic(X)$ the induction of $\cL \in \Bun_T(X)$ along $\b: \L_{T}\to \ZZ$).
\item Moreover, the filtrations $\{F^{\b}\cE_{V}\}_{\b\in \Lambda_T^\vee}$ and the tensor structure of $V\mapsto \cE_{V}$ are compatible in the following sense: if $V,V'\in \Rep(G)$, then under the isomorphism $\cE_{V\otimes V'}\simeq \cE_{V}\otimes\cE_{V'}$, we have $F^{\b''}\cE_{V\otimes V'}=\sum_{\b+\b'\ge \b''}F^{\b}\cE_{V}\otimes F^{\b'}\cE_{V'}$.
\end{itemize}

Now we are ready to define the map \eqref{Bun B add mu}. Starting with a point $(\cE_{V}; F^{\b}\cE_{V})_{V\in \Rep(G)}$ of $\Bun^{\l-\mu}_{B}(X)$. Let $\cE'_{V}$ be the pushout of $\cE_{V}$ with respect to the line bundles $\b\mapsto \cO_{X}(\jiao{\b,\mu}\cdot s)$. Since $\mu$ is dominant, for $\b\le \b'\in \L^{\vee}_{T}$, we have $\jiao{\b,\mu}\le\jiao{\b',\mu}$ hence a natural inclusion $\cO_{X}(\jiao{\b,\mu}\cdot s)\incl \cO_{X}(\jiao{\b',\mu}\cdot s)$, therefore the pushout is defined. The data $(\cE'_{V}; F^{\b}\cE'_{V})_{V\in\Rep(G)}$ then defines a point in $\Bun^{\l}_{B}(X)$.

This completes the proof of the claim and thus that of the lemma.
\end{proof}

\begin{ex}[$G=\PGL(2)$]  We explain the stacks that appear in the proof above in the case $G=\PGL(2)$. Let $\mu=n\ge0$. The Hecke correspondence  $\Gamma_{-n}$ in the proof above can be described as follows. Let $\wt\Gamma_{-n}$ be  the moduli stack of $(\cE_{-1}\incl \cE_{0}\incl\cdots\incl \cE_{n}; \{\ell_{s'}\}_{s'\in S\setminus\{s\}})$ where each $\cE_{i}$ is a rank two vector bundle on $X$, each arrow $\cE_{i}\incl \cE_{i+1}$ is an upper modification of degree $1$ at $s$, such that $\cE_{i-1}(s)\ne \cE_{i+1}$ for $i=0,1,\cdots, n-1$; finally, for $s'\ne s$, $\ell_{s'}$ is a line of the fiber of $\cE_{0}$ at $s'$. Then we define $\Gamma_{-n}=\wt\Gamma_{-n}/\Pic(X)$ where $\Pic(X)$ acts by simultaneous tensoring on $\cE_{i}$. The map ${p_-}$ sends $(\cE_{-1}\incl \cE_{0}\incl\cdots\incl \cE_{n}; \{\ell_{s'}\}_{s'\in S\setminus \{s\}})$ to $(\cE_{0};\{\ell_{s'}\}_{s'\in S})$ where $\ell_{s}$ is the image of $\cE_{-1}$ in the fiber of $\cE_{0}$ at $s$. The map ${p_+}$ sends $(\cE_{-1}\incl \cE_{0}\incl\cdots\incl \cE_{n}; \{\ell_{s'}\}_{s'\in S\setminus \{s\}})$ to $(\cE_{n}, \{\ell'_{s'}\}_{s'\in S})$ where $\ell'_{s}$ is the image of $\cE_{n-1}$ in the fiber of $\cE_{n}$ at $s$, $\ell'_{s'}$ for $s'\ne s$ is induced from $\ell_{s'}$ after identifying $\cE_{0}|_{X\setminus\{s\}}$ and $\cE_{n}|_{X\setminus \{s\}}$.  

Let $\l=m\in\ZZ$. The stack ${}^{m}\Gamma'_{-n}$ defined in the proof above has the following moduli interpretation. It classifies $(\cL\subset \cE_{0}\incl\cdots\incl \cE_{n})$ where the chain $\cE_{0}\incl \cdots\cE_{n}$ is as before, $\cL$ is a line subbundle of $\cE_{0}$ which is also saturated in $\cE_{1}$ (the last condition is equivalent to $\cE_{-1}(s)\ne \cE_{1}$, if we define $\cE_{-1}$ to be the lower modification of $\cE_{0}$ at $s$ determined by the line $\cL_{s}$ of the fiber of $\cE_{0}$ at $s$).  It is easy to see inductively that $\cL$ is saturated in $\cE_{2},\cdots, \cE_{n}$. Therefore $(\cL\subset \cE_{n})$ defines a point in $\Bun^{m-n}_{B}(X)$. This gives the map $\gamma_+: {}^{m}\Gamma'_{-n}\to \Bun^{m-n}_{B}(X)$ which is an isomorphism: the pair $\cL\subset \cE_{n}$ determines the chain $\cE_{0}\incl\cdots\incl \cE_{n}$ because $\cE_{i-1}$ can be inductively identified with the pullback of $(\cE_{i}/\cL)(-s)$ under the quotient $\cE_{i}\surj \cE_{i}/\cL$. 
\end{ex}


\subsection{Whittaker sheaf}

In this subsection, we assume in addition that 
$\rho^{\vee}\in\L_{T}$, for example $G$ is adjoint.

\subsubsection{Twisted $N$-bundles}
Consider the distinguished $T$-bundle
\begin{equation*}
\om(S) :=\rho^{\vee}\otimes\om_{X}(S) \in\Bun_{T}(X) \simeq \L_{T}\otimes_{\ZZ}\Pic(X).
\end{equation*}

Define the moduli 
\begin{equation*}
\Bun^{\om(S)}_{N}(X, S)=\Bun^{\om(S)}_{N}(X)\times_{\Bun_{G}(X)}\Bun_{G}(X, S)
\end{equation*}
classifying triples $(\calE_{B},\tau, \{\calF_{s}\}_{s\in S})$, where $\calE_{B}$ is a $B$-torsor, 
 $\tau: \calE_{B}/N\to \om(S)$ is an isomorphism of $T$-torsors,  and $\calF_{s}$ is a $B$-reduction of the fiber $\calE_{G}|_s$ of the $G$-bundle induced by $\calE_B$. In other words, the choice of $\calF_{s}$
 is equivalent to the choice of a point of the twisted flag variety
 $\cB_{\calE_B|_s}=\cE_{B}|_{s}\twtimes{B}\cB$.

Observe that there is an open substack
\begin{equation*}
\Bun^{\om(S),\circ}_{N}(X, S)
\subset \Bun^{\om(S)}_{N}(X, S)
\end{equation*}
where  the $B$-reductions $\calE_B|_s$ and $\calF_s$ of the fiber $\calE_G|_s$
are transverse, for each $s\in S$. If we let $\cB^{\circ}\subset \cB$ be the  open $B$-orbit, then  the choice of $\calF_{s}$ is now  equivalent to the choice of a point of the twisted open cell
\begin{equation*}
\cB^{\circ}_{\calE_B|_x}
\subset
\cB_{\calE_B|_s}
\end{equation*}
Note since $B\backslash \cB^{\circ} \to T\backslash pt$ is an equivalence,  the choice of such $\calF_{s}$ is in turn equivalent to a splitting of $\calE_B|_s\to \omega(S)|_s$.
 
Thus the abelianization map $N\to N/[N,N]\simeq  \prod_{i=1}^{r}\GG_{a}$, where $r$ is the rank,
 induces a map 
 \begin{equation}\label{eq:N to Ga}
 \xymatrix{
 \Bun^{\om(S), \circ}_{N}(X, S)\ar[r] & \prod_{i=1}^{r}\Bun^{\om_X(S)}_{\GG_{a}, S}(X)
 }
 \end{equation}
where $\Bun^{\om_X(S)}_{\GG_{a}, S}(X)$ classifies extensions $\om_X(S) \to \calE\to \calO_X $ with a splitting at each $s\in S$.

Pushout of extensions along the inclusion $\omega_X \to\omega_X(S)$ provides a canonical equivalence 
 \begin{equation}\label{eq:GaS}
\xymatrix{
\Bun^{\om_X}_{\GG_{a}}(X) \ar[r]^-\sim&  \Bun^{\om_X(S)}_{\GG_{a}, S}(X) 
&
(\om_X \to \calE\to \calO_X)  \ar@{|->}[r] & (\om_X(S) \to \calE' \to \calO_X) 
}
 \end{equation}
 since  the inclusion $\calE_{x}\to \calE'_{x}$ factors through $\calE_{x}\to \calO_{X, x}$, and hence its image gives a splitting of $\calE_x'\to \calO_{X, x}$.

Composing ~\eqref{eq:N to Ga} with the inverse of ~\eqref{eq:GaS} and taking the sum of the canonical evaluations 
\begin{equation*}
\xymatrix{
\Bun^{\om_X}_{\GG_{a}}(X)\simeq H^{1}(X,\om_X)\simeq \GG_{a}
}\end{equation*}
we obtain the total evaluation 
\begin{equation*}
\xymatrix{
\ev:\Bun^{\om(S), \circ}_{N}(X, S)\ar[r] & \GG_{a}.
}\end{equation*}
Note that the total evaluation is $\GG_m$-equivariant for the action on $\Bun^{\om(S), \circ}_{N}(X, S)$ induced via $\rho^\vee:\GG_m\to T$ from the adjoint $T$-action
and the usual rotation action on $\GG_a$.
Therefore it descends to a map
\begin{equation*}
\xymatrix{
\ov{\ev}: \Bun^{\om(S),\circ}_{N}(X)/\Gm\ar[r] &\GG_{a}/\Gm.
}
\end{equation*}

We also have the natural induction map
\begin{equation*}
\xymatrix{
p: \Bun^{\om(S),\circ}_{N}(X, S) \ar[r] & \Bun_{G}(X, S)
}
\end{equation*}
which descends to a map
\begin{equation*}
\xymatrix{
\ov{p}: \Bun^{\om(S),\circ}_{N}(X, S)/\Gm  \ar[r] & \Bun_{G}(X, S)
}
\end{equation*}
where again the $\GG_m$-action on $\Bun^{\om(S), \circ}_{N}(X, S)$ is induced via $\rho^\vee:\GG_m\to T$ from the adjoint $T$-action.

\subsubsection{$\Gm$-averaged Artin-Schreier sheaf}\label{sss:Gm AS} 
Let us write $j:pt = \Gm/\Gm\to \Ga/\Gm$ for the open inclusion. Let
\begin{equation*}
\Psi:=j_{*}\const{\pt}[-1] \in D_{\Gm}(\Ga).
\end{equation*}
This sheaf should be thought of as a $\Gm$-equivariant version of an Artin-Schreier sheaf over $\Ga$ if we worked over a base field of finite characteristic, or a $\Gm$-equivariant version  of the exponential $D$-module over $\Ga$ if we worked in the $D$-modules setting.

\begin{defn} 
The Whittaker sheaf is the object
\begin{equation*}
\xymatrix{
\Wh_{S}= \ov{p}_{!}\ov{\ev}^{*}\Psi[-d_{S}] \in \Sh_{!}(\Bun_G(X, S))
}
\end{equation*}
where
\begin{equation*}
d_{S}=\dim B\cdot (g-1)+\jiao{2\rho, \rho^{\vee}}(2g-2+\#S)
\end{equation*}
is the dimension of $\Bun^{-\om(S)}_{B}(X)$.
\end{defn}

\begin{ex}[$X=\PP^{1}, S=\{0,\infty\}$]\label{ex:Wh 2 pts}
In this case, we have $\omega(S) \simeq \calO_{\PP^1}$, and hence the Whittaker sheaf is supported on the open locus
where the underlying $G$-bundle is semistable or equivalently trivializable
\begin{equation*}
\xymatrix{
\Bun^\triv_G(\PP^1, \{0, \infty\}) \simeq G\bs (\cB\times \cB). 
}
\end{equation*}
On the other hand, let $\cB^{\circ}$ be the open $N$-orbit in $\cB$, then we have
\begin{equation*}
\Bun^{\om(S),\circ}_{N}(\PP^1, \{0, \infty\}) \simeq N\bs (\cB^{\circ}\times \cB^{\circ}).
\end{equation*}
If we choose a point $B^{-}\in \cB^{\circ}$ represented by a Borel opposite to $B$, then we have $G\bs (\cB\times \cB)\simeq B^{-}\bs \cB$ by fixing the first coordinate to be $B^{-}$; similarly, we have $N\bs (\cB^{\circ}\times \cB^{\circ})\simeq \cB^{\circ}$ by fixing the first coordinate to be $B^{-}$. Under the above isomorphisms, the map $p : \Bun^{\om(S),\circ}_{N}(\PP^1, \{0, \infty\}) \to \Bun_{G}(\PP^1, \{0, \infty\})$ is the evident composition
\begin{equation*}
\xymatrix{
\calB^{\circ} \ar@{^(->}[r]^-i & \calB \ar@{->>}[r]^-q & B^{-}\backslash \calB
}
\end{equation*}


Let $\Xi\in \Perv_{N}(\cB)$ be the tilting extension  to $\cB$ of the constant perverse sheaf $\const{\cB^{\circ}}[\dim \cB^{\circ}]$. We claim that
\begin{equation}\label{Wh 2 pts}
\Wh_{\{0,\infty\}}\simeq u_{!}q_{!}\Xi[\dim B].
\end{equation}
Here $u: B^-\bs \cB\simeq \Bun^{triv}_{G}(\PP^1, \{0, \infty\})\incl \Bun_{G}(\PP^1, \{0, \infty\})$ is the open inclusion. To see this, we only need to note that both sides of \eqref{Wh 2 pts}, up to appropriate shifts, corepresent the functor of vanishing cycles at a generic covector at the image of  $\Bun^0_B(\PP^1) \to \Bun_G(\PP^1, \{0, \infty\})$.
\end{ex}

The Whittaker sheaf $\Wh_{S}$ enjoys an asphericity property, as we spell out now. For $s\in S$ and a parabolic subgroup $P\subset G$, we may define a moduli stack $\Bun_{G}(X,S)_{s,P}$ where the level structure at $s$ is changed to a $P$-reduction. We have a proper smooth projection
\begin{equation*}
\pi_{s,P}:\Bun_{G}(X,S)\to \Bun_{G}(X,S)_{s,P}
\end{equation*}
which induces adjoint functors
\begin{equation*}
\xymatrix{
\Sh_!(\Bun_{G}(X, S))
\ar[rr]^-{\pi_{s,P,*}=\pi_{s,P!}} &&
 \Sh_{!}(\Bun_{G}(X, S)_{s,P}) \ar@<-2.5ex>[ll]_-{\pi^{*}_{s,P}}\ar@<2.5ex>[ll]_-{\pi^{!}_{s,P}}}
\end{equation*}

\begin{lemma}\label{l:Avg Wh 0}
Let $s\in S$ and $P\subset G$ be a parabolic subgroup which is not a Borel. Then
\begin{equation*}
\pi_{s,P!}\Wh_{S}\simeq 0.
\end{equation*}
\end{lemma}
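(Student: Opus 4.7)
The plan is to factor $\pi_{s,P}\circ\ov p$ through an intermediate moduli obtained by forgetting the $B$-refinement of the Borel reduction $\cF_s$, and then to use the nontriviality of $\Psi$ as a character sheaf to obtain a fiber integral that vanishes. Concretely, introduce the moduli $\cM_{s,P}$ classifying $(\cE_B,\tau,\{\cF_{s'}\}_{s'\ne s},\cG_s)$, where $(\cE_B,\tau)$ is a twisted $N$-bundle with $\tau\colon\cE_B/N\simeq\om(S)$, the $\cF_{s'}$ are transverse Borel reductions at $s'\ne s$, and $\cG_s$ is a $P$-reduction of the induced $G$-bundle at $s$; forgetting the refinement of $\cF_s$ to its induced $P$-reduction (and passing to $\Gm$-quotients) defines a map $\ov\alpha\colon\Bun^{\om(S),\circ}_N(X,S)/\Gm\to\cM_{s,P}/\Gm$, and the natural forgetful map $\ov\beta\colon\cM_{s,P}/\Gm\to\Bun_G(X,S)_{s,P}$ completes a factorization $\pi_{s,P}\circ\ov p=\ov\beta\circ\ov\alpha$. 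Thus
\[
\pi_{s,P,!}\Wh_S\simeq\ov\beta_!\ov\alpha_!\ov\ev^*\Psi[-d_S],
\]
so it suffices to show $\ov\alpha_!\ov\ev^*\Psi\simeq 0$.

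I would next analyze the fibers of $\ov\alpha$. Over a generic point of $\cM_{s,P}/\Gm$, the fiber is the open subset of $P/B\simeq L_P/B_{L_P}$ (where $L_P$ is the Levi of $P$) consisting of Borel refinements of $\cG_s$ that are transverse to $\cE_B|_s$. Since $P\supsetneq B$, this fiber has positive dimension and contains the direction of a simple root subgroup $U_{-\alpha}\simeq\Ga$ for some simple root $\alpha$ of $L_P$. Along this $\Ga$-direction, the local evaluation $\ev_s$ varies as a nontrivial affine-linear map $\Ga\to\Ga$: the linear part is governed by the boundary map $\om_X(S)|_s/\om_X|_s\to \upH^1(X,\om_X)$ from the long exact sequence of $0\to\om_X\to\om_X(S)\to\om_X(S)|_S\to 0$, which is nontrivial by Serre duality and the residue theorem.

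The result then reduces to the key vanishing $\upH^{*}_{c}(\Ga,\phi^*\Psi)=0$ for any nontrivial affine-linear $\phi\colon\Ga\to\Ga$. This follows directly from $\Psi=j_*\underline{\QQ}[-1]$ (with $j\colon\Gm\incl\Ga$): the triangle $j_!\underline{\QQ}_\Gm\to j_*\underline{\QQ}_\Gm\to i_*i^*j_*\underline{\QQ}_\Gm$ on $\Ga$ yields, upon $\upH^{*}_{c}$, an isomorphism between the outer terms, forcing the middle to vanish. Fubini along $U_{-\alpha}$ then gives $\ov\alpha_!\ov\ev^*\Psi\simeq 0$, hence $\pi_{s,P,!}\Wh_S\simeq 0$. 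The main obstacle will be verifying the precise identification of the fiber of $\ov\alpha$ and the nontrivial affine-linear behavior of $\ev_s$ along the $U_{-\alpha}$ direction; these are local statements at $s$ that unpack the definitions of the twist $\om(S)$, the Whittaker moduli, and the evaluation map, but otherwise do not require new ideas.
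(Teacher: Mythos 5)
Your strategy is the same as the paper's: factor $\pi_{s,P}\circ\ov p$ through the map that forgets the $B$-refinement at $s$ down to a $P$-reduction, and kill the resulting fiber integral using $\cohoc{*}{\AA^{1}, j'_*\const{U}}\simeq 0$ for $U$ the complement of a point (your "key vanishing" is exactly this computation, and your identification of the linear part of $\ev_s$ via the residue/boundary map $\om_X(S)|_s \to \cohog{1}{X,\om_X}$ is the correct justification for what the paper simply observes, namely that $\ev\circ\wt\xi:\AA^1\to\Ga$ is an isomorphism).

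The one substantive difference is that the paper first reduces to the minimal parabolics $P_i$: any non-Borel $P$ contains some $P_i$, and $\pi_{s,P}$ factors as $\Bun_G(X,S)\xr{\pi_{s,P_i}}\Bun_G(X,S)_{s,P_i}\to\Bun_G(X,S)_{s,P}$, so it suffices to treat $P=P_i$, where the forgetful map is an honest $\AA^1$-fibration and $\ev$ restricted to each fiber is an isomorphism onto $\Ga$. You instead work with general $P$, and precisely the step you defer as "the main obstacle" is where the content then sits. Two points there need care. First, you must show that the fiber of $\ov\alpha$ (an open subvariety of $L_P/B_{L_P}$) is stable under the $U_{-\alpha}$-translation you choose, and that each $U_{-\alpha}$-orbit meeting it is \emph{entirely} contained in it: the vanishing $\cohoc{*}{\Ga,\phi^*\Psi}=0$ fails on proper open subsets of $\Ga$ (e.g.\ $\cohoc{*}{\Gm,\QQ}\neq 0$), so a Fubini over partial orbits would not close the argument. (In fact the fiber is the full big cell of $L_P/B_{L_P}$, equivalently a torsor under $N\cap\cG_s$, so this works out, but it has to be said.) Second, you must check that the composite $U_{-\alpha}\incl N\cap\cG_s\incl N\to N/[N,N]\to\Ga$ is nonzero for your chosen $\alpha$, i.e.\ that the direction you integrate along is not absorbed into $[N,N]$. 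Both verifications are routine but they are exactly what the reduction to $P_i$ eliminates; I would recommend adding that reduction rather than carrying out the general-$P$ fiber analysis.
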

\begin{proof} Let $P_{i}$ be the standard parabolic whose Levi only has simple root $\a_{i}$. Then each $P$ which is not a Borel contains some $P_{i}$, and $\pi_{s,P}$ factors as
\begin{equation*}
\Bun_{G}(X,S)\xr{\pi_{s,P_{i}}} \Bun_{G}(X,S)_{s, P_{i}}\to \Bun_{G}(X,S)_{s,P}.
\end{equation*}
Therefore it suffices to show that $\pi_{s,P_{i}!}\Wh_{S}\simeq 0$, for each $P_{i}$.
 
We denote $\Bun_{G}(X,S)_{s,P_{i}}$ simply by $\Bun_{G}(X,S)_{s,i}$, and denote  $\pi_{s,P_{i}}$ similarly by $\pi_{s,i}$, which is a $\PP^{1}$-fibration.

Let us  extend the maps in the  definition of $\Wh_S$
 to a commutative (but not Cartesian) diagram
\begin{equation*}
\xymatrix{
\pt = \GG_m/\GG_m \ar@{^(->}[r]^-j & \GG_{a}/\Gm
 & \ar[l]_-{\ov{\ev}} \Bun^{\om(S),\circ}_{N}(X, S)/\Gm  \ar[r]^-{\ov{p}} \ar[d]^{\pi'_{s, i}} & \Bun_{G}(X, S)
 \ar[d]^{\pi_{s, i}}\\
 &&\Bun^{\om(S),\circ}_{N}(X, S)_{s, i}/\Gm  \ar[r]^-{\ov{p}_{s,i}} &  \Bun_{G}(X, S)_{s, i}
 }
\end{equation*}
 where we denote by $\Bun^{\om(S),\circ}_{N}(X, S)_{s, i}/\Gm$
 the moduli where we replace the $B$-reduction at $s$ with a $P_i$-reduction in general position with the given $N$-structure, and 
  $\pi'_{s, i}$ is the natural $\AA^{1}$-fibration where we forget the $B$-reduction at $s$ to a $P_i$-reduction.

Now returning to the definition of $\Wh_S$, we have 
\begin{equation*}
\xymatrix{
 \pi_{s, i!}\Wh_S =  \pi_{s, i!} \ov{p}_!\ov{\ev}^*j_*\const{\pt}[-1-d_S] \simeq \ov{p}_{s,i!} \pi'_{s, i!}\ov{\ev}^*j_*\const{\pt}[-1-d_S] 
 }
\end{equation*}
and so it suffices to show 
\begin{equation*}
\xymatrix{
\pi'_{s, i!}\ov{\ev}^*j_*\const{\pt}\simeq 0.
 }
\end{equation*}

Fix a point $\xi:\pt\to \Bun^{\om(S),\circ}_{N}(X, S)_{s, i}$, and consider the base-changed Cartesian diagram
\begin{equation*}
\xymatrix{
\Gm \ar@{^(->}[r]^-j & \GG_{a}
 & \ar[l]_-{{\ev}} \Bun^{\om(S),\circ}_{N}(X, S) \ar[d]^{\pi'_{s, i}} & \ar[l]_-{\wt \xi} \AA^{1}
 \ar[d] \\
 &&\Bun^{\om(S),\circ}_{N}(X, S)_{s, i}  & \ar[l]_-{\xi} \pt 
 }
\end{equation*}
Then it suffices to show 
\begin{equation*}
\xymatrix{
 \xi^*\pi'_{s, i!}{\ev}^*j_*\const{\Gm}\simeq 0.
}
\end{equation*}


Finally, observe that $\ev\circ \wt\xi :\AA^{1}\to \GG_a$ is an isomorphism of schemes, and so 
\begin{equation*}
\xymatrix{
\wt\xi^*\ev^*j_*\const{\Gm}\simeq j'_*\const{U}.
 }
\end{equation*} 
where $j': U=(\ev\circ\wt \xi)^{-1}(\Gm)\incl \AA^{1}$ is the complement of one point in $\AA^{1}$.  Thus we have the required vanishing 
\begin{equation*}
\xymatrix{
 \xi^*\pi'_{s, i!}\ev^*j_*\const{\Gm} \simeq
 \pi'_{s, i!}\wt\xi^*\ev^*j_*\const{\Gm} \simeq
 \cohoc{*}{\AA^{1}, j'_*\const{U}}\simeq 0.
}
\end{equation*}
\end{proof}

\begin{cor}\label{c:Wh asp} Let $s\in S$.
\begin{enumerate}
\item For any $w\not = 1\in \Wf$, we have $\IC_{w}\star_{s}\Wh_{S}\simeq 0$.
\item The action of $\cH^{f}_{G} \subset \cH^\aff_G$  on $\Sh(\Bun_G(X, S))$ by Hecke modification at $s$  factors through the monoidal functor $\VV:\cH^f_G \to \Vect$ in that for any $\cK\in \cH^{f}_{G}$, there is a canonical isomorphism
\begin{equation*}
\cK\star_{s}\Wh_{S}\simeq \VV(\cK)\otimes \Wh_{S}
\end{equation*}
compatible with the monoidal structures in the obvious sense.
\end{enumerate}
\end{cor}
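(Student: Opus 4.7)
The plan is to prove (1) first, reducing to the case of a single simple reflection via a Bott--Samelson argument, and then to deduce (2) formally from the universal property of $\VV$. For the base case $w=s_i$ a simple reflection, the IC sheaf $\IC_{s_i}$ is (a shift of) the constant sheaf on the closed Schubert variety $\overline{Bs_iB}/B\simeq P_i/B\simeq\PP^1$. The key geometric observation is that two $B$-reductions of a given $G$-bundle at $s$ have relative position in $\overline{\{1,s_i\}}$ if and only if they induce the same $P_i$-reduction. Consequently, the Hecke correspondence computing $\IC_{s_i}\star_s(-)$ at $s$ is the self-fiber product
$$
\Hecke^{s_i}_s \;=\; \Bun_G(X,S)\times_{\Bun_G(X,S)_{s,P_i}}\Bun_G(X,S),
$$
both projections equal $\pi_{s,P_i}$, and $\kappa^*\IC_{s_i}$ is a shifted constant sheaf. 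Proper base change in the resulting Cartesian square yields, for any $\cF$,
$$
\IC_{s_i}\star_s\cF \;\simeq\; \pi_{s,P_i}^{*}\pi_{s,P_i,!}(\cF)[d]
$$
for an appropriate shift $d$. Applying this to $\cF=\Wh_S$ and invoking Lemma~\ref{l:Avg Wh 0} with $P=P_i$ (which is not a Borel) gives $\pi_{s,P_i,!}\Wh_S\simeq 0$, whence $\IC_{s_i}\star_s\Wh_S\simeq 0$.

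For general $w\neq 1$, I fix a reduced expression $w=s_{i_1}\cdots s_{i_k}$ with $k=\ell(w)\ge 1$. In characteristic zero the decomposition theorem applied to the proper Bott--Samelson map realizes $\IC_w$ as a direct summand of the convolution $\IC_{s_{i_1}}\ast\cdots\ast\IC_{s_{i_k}}$ in $\cH^f_G$. Using the module compatibility of the $\cH^{\aff}_G$-action on $\Sh(\Bun_G(X,S))$,
$$
(\IC_{s_{i_1}}\ast\cdots\ast\IC_{s_{i_k}})\star_s\Wh_S \;\simeq\; \IC_{s_{i_1}}\star_s\bigl(\cdots\star_s(\IC_{s_{i_k}}\star_s\Wh_S)\bigr) \;\simeq\; 0,
$$
since the innermost factor vanishes by the simple reflection case just proved. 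Hence $\IC_w\star_s\Wh_S$ is a direct summand of $0$ and vanishes, completing part (1).

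For (2), the functor $F:=(-)\star_s\Wh_S:\cH^f_G\to\Sh(\Bun_G(X,S))$ is $\QQ$-linear and exact. Part (1), together with the fact that the monoidal ideal $\langle\IC_w\mid w\neq 1\rangle$ is generated under extensions and shifts by the $\IC_w$, shows that $F$ annihilates this entire ideal. The universal property of $\VV:\cH^f_G\to\Vect$ as the quotient by this ideal then factors $F$ uniquely as $\bar F\circ\VV$ for some $\bar F:\Vect\to\Sh(\Bun_G(X,S))$. Since $F(\delta)=\Wh_S$ and $\VV(\delta)=\QQ$, $\QQ$-linearity forces $\bar F(V)\simeq V\otimes\Wh_S$, yielding $\cK\star_s\Wh_S\simeq\VV(\cK)\otimes\Wh_S$. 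Compatibility with the monoidal structures is inherited from the monoidality of $\VV$ and the $\cH^f_G$-module structure of $\star_s$. The main obstacle is the base-change identity for simple reflections, which rests on the identification of the Hecke correspondence for $\IC_{s_i}$ with the self-fiber product over the parabolic level $\Bun_G(X,S)_{s,P_i}$.
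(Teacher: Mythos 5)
Your proposal is correct and follows essentially the same route as the paper: reduce part (1) to simple reflections (the paper asserts this reduction tersely, which your Bott--Samelson/decomposition-theorem argument makes precise), identify $\IC_{\sigma_i}\star_s(-)$ with $\pi_{s,P_i}^{*}\pi_{s,P_i,!}(-)[1]$, invoke Lemma~\ref{l:Avg Wh 0}, and deduce (2) from the universal property of $\VV$ as the quotient by the monoidal ideal $\langle \IC_w \mid w\neq 1\rangle$. No gaps.
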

\begin{proof}
(1) Since any $w\not =1$ can be written as a product of simple reflections $\sigma_i$,  it suffices to show $\IC_{\sigma_i} \star_s \Wh_S \simeq 0$, for the simple reflections $\sigma_i\in  \Wf$. Let $P_{i}$ be the standard parabolic of $G$ whose Levi has only simple root $\a_{i}$. Then 
\begin{equation*}
\IC_{\sigma_{i}}\star_{s}\Wh_{S}\simeq \pi^{*}_{s,P_{i}}\pi_{s,P_{i},!}\Wh_{S}[1]
\end{equation*}
which vanishes by Lemma \ref{l:Avg Wh 0}. Therefore (1) is proved.

Since $\VV:\calH^f_G\to \Vect$ is monoidal and the universal quotient functor with kernel the monoidal ideal $\langle \IC_w \vert w\not = 1 \in \Wf\rangle$, (2) follows from (1). 
\end{proof}


\subsubsection{Wakimoto action on Whittaker sheaf} 


For $s\in S$, recall we have an action of $\Perf(\tdN/\dG)$ on $\Sh_{!}(\Bun_{G}(X,S))$ as the restriction of the affine Hecke action at $s$, see ~\eqref{eq:Waki to Haff} and ~\eqref{eq:PerfN S action}. By acting on $\Wh_{S}$, we  obtain a functor


\begin{equation}\label{eq: act on wh functor}
\xymatrix{
\a_{s}: \Perf(\tdN/\dG) \ar[r] & \Sh_{!}(\Bun_{G}(X,S)) 
}
\end{equation}
such that line bundles go to translations of $\Wh_S$ by Wakimoto operators 
\begin{equation*}
\xymatrix{
\a_{s}( \calO_{\tdN}(\lambda))  = J_\lambda\star_s \Wh_S, & \l\in\L_{T}.
}
\end{equation*}

\begin{prop}\label{p:Hk equiv} The functor $\a_{s}$ intertwines the action of $\Coh^{\dG}(\St_{\dG})$ on the left side and the $\star_{s}$-action of $\cH^{\aff}_{G}$ on the right side under the monoidal equivalence $\Phi^{\aff}$.
\end{prop}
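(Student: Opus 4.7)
The plan is to exploit the aspherical quotient $\cH^{\asp}_G = \cH^{\aff}_G \otimes_{\cH^f_G}\Vect$ as an intermediary between $\Perf(\tdN/\dG)$ and $\Sh_!(\Bun_G(X,S))$, using Bezrukavnikov's aspherical equivalence $\Phi^{\asp}$ from \eqref{Phi asp}. The key observation is that the Whittaker sheaf $\Wh_S$ is an \emph{aspherical} test object for the $\star_s$-action.

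First, by Corollary \ref{c:Wh asp}(2), the $\cH^{f}_{G}$-subcategory acts on $\Wh_S$ through the quotient $\VV: \cH^{f}_{G}\to \Vect$; equivalently, the monoidal ideal $\langle \IC_w \mid w\neq 1\in \Wf\rangle$ annihilates $\Wh_S$ under $\star_s$. Consequently the functor
\begin{equation*}
\cH^{\aff}_{G}\to \Sh_{!}(\Bun_G(X,S)), \qquad K\mapsto K\star_s \Wh_S
\end{equation*}
descends through the projection $\cH^{\aff}_G \to \cH^{\asp}_G$ to a functor $\b_s: \cH^{\asp}_G \to \Sh_!(\Bun_G(X,S))$. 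By construction $\b_s$ is equivariant for left convolution: $\b_s(K\star L)\simeq K\star_s \b_s(L)$ for $K\in\cH^{\aff}_G$ and $L\in\cH^{\asp}_G$, since both sides compute $(K\star \tilde L)\star_s \Wh_S$ for any lift $\tilde L\in\cH^{\aff}_G$ of $L$.

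Next, I will transport this equivariance through Bezrukavnikov's equivalences. The second clause of \eqref{Phi asp} asserts that $\Phi^{\asp}$ intertwines the $\cH^{\aff}_G$-action on $\cH^{\asp}_G$ (by left convolution) with the $\Coh^{\dG}(\St_{\dG})$-action on $\Perf(\tdN/\dG) = \Coh^{\dG}(\tdN)$ transported via $\Phi^{\aff}$. Composing with $\b_s$ therefore produces a functor $\b_s\circ\Phi^{\asp}: \Perf(\tdN/\dG)\to\Sh_!(\Bun_G(X,S))$ which is $\Coh^{\dG}(\St_{\dG})$-equivariant in exactly the sense of the proposition.

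It remains to identify $\b_s\circ\Phi^{\asp}$ with the functor $\a_s$ of \eqref{eq: act on wh functor}. By the definition through \eqref{eq:Waki to Haff}, $\a_s(F)=\Phi^{\aff}(\Delta_{*}F)\star_s\Wh_S$. Since the closed embedding $\Delta: \tdN/\dG\hookrightarrow \St_{\dG}/\dG$ realizes $\Delta_*$ as a section of the restriction $\Delta^*$, the object $\Phi^{\aff}(\Delta_*F)$ is a lift of $\Phi^{\asp}(F)$ along the projection $\cH^{\aff}_G\to\cH^{\asp}_G$ (this is the compatibility of Bezrukavnikov's two equivalences with $\Delta$, e.g. via the cyclic presentation $\cH^{\asp}_G\simeq \cH^{\aff}_G\cdot\delta_{\asp}$ with $\Phi^{\asp}(\calO_{\tdN})\simeq \delta_{\asp}$ and $\Phi^{\aff}(\Delta_*\calO_{\tdN})\simeq \delta_{\aff}$). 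Since $\b_s$ depends only on the image in $\cH^{\asp}_G$, this gives $\a_s(F)\simeq \b_s(\Phi^{\asp}(F))$ canonically, completing the proof.

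The only delicate step is Step 4, i.e.\ the compatibility $\Phi^{\aff}\circ \Delta_*\simeq \text{(lift)}\circ \Phi^{\asp}$; this is essentially repackaging of Bezrukavnikov's results but requires care with conventions and can alternatively be checked on the monoidal generators $\calO_{\tdN}(\lambda)$ where both functors manifestly produce the Wakimoto translates $J_\lambda\star_s\Wh_S$, with the general case following by the $\cH^{\aff}_G$-equivariance established in Step 2.
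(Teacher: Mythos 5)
Your proof is correct and follows essentially the same route as the paper: the paper also factors $\cK\mapsto\cK\star_s\Wh_S$ through the aspherical quotient via Corollary~\ref{c:Wh asp} (your $\b_s$ is its $\a'_s$), and identifies $\a_s$ with $\a'_s\circ\Phi^{\asp}$ using the fact that $\Phi^{\asp}$ is by construction the composite $\cK\mapsto\ov{\Delta_*\cK}$. The "delicate step" you flag is exactly this last point, which the paper resolves by citing the construction of $\Phi^{\asp}$ in Bezrukavnikov's work rather than checking it on generators.
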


\begin{proof}
By Corollary \ref{c:Wh asp}, the $\star_{s}$-action of $\cH^{\aff}_{G}$ on the object $\Wh_{S}$ factors through the aspherical quotient $\cH^{\asp}_{G}$, or  in other words, we have a functor
\begin{equation*}
\xymatrix{
\a'_{s}: \cH^{\asp}_{G}\ar[r] &  \Sh_{!}(\Bun_{G}(X,S))
}
\end{equation*} 
and a canonical equivalence $\cK\star_{s}\Wh_{S}\simeq \a'_{s}(\ov \cK)$, where $\cK\in \cH^{\aff}_{G}$, and $\ov\cK\in\cH^{\asp}_{G}$ is its image. 
By construction, the functor $\a'_{s}$ is a $\cH^{\aff}_{G}$-module map.

Now we claim that  $\a_{s}$ and $\a'_{s}$ are the same functors under the equivalence  $\Phi^{\asp}$. By the construction in \cite{B},  $\Phi^{\asp}$ is  the composition $\Perf(\tdN/\dG) \to \cH_G^\aff\to  \cH_G^{\asp}$
given by $\calK \mapsto \ov{\Delta_*\calK}$.
Thus we have canonical equivalences
\begin{equation*}
\xymatrix{
\a_{s}(\cK)\simeq (\D_{*}\cK)\star_{s}\Wh_{S} \simeq \a'_{s}(\ov{\D_{*}\cK}), & \calK\in \Perf(\tdN/\dG).
}\end{equation*}

Finally, since $\Phi^{\asp}$ intertwines the $\cH^{\aff}_{G}$-action and the $\Coh^{\dG}(\St_{\dG})$-action via the monoidal equivalence $\Phi^{\aff}$, and $\a'_{s}$ is a $\cH^{\aff}_{G}$-module map, the lemma follows. 
\end{proof}

\subsection{Two point ramification}\label{ss:2 pts}
In this section, we specialize to the case $X=\PP^{1}$ and $S=\{0,\infty\}$. We elaborate on the principle that ``$\Sh_{!}(\Bun_G(\PP^{1}, \{0,\infty\}))$ is the same as $\cH^{\aff}_G$''. 


We have the two commuting actions $\star_0, \star_\infty$  of $\cH^{\aff}_{G}$ on $\Sh_{!}(\Bun_G(\PP^{1}, \{0,\infty\}))$ by Hecke modifications at respectively $0$, $\infty$. We have the Eisenstein series sheaf $\Eis_{0}$ described in Example~\ref{ex:Eis 0}. 
Acting by   $\cH^{\aff}_{G}$ on $\Eis_{0}$   at $0$, we obtain a functor
\begin{equation*}
\xymatrix{
\Phi'_{0,\infty}: \cH^{\aff}_{G} \ar[r] &  \Sh_{!}(\Bun_{G}(\PP^1, \{0, \infty\}) &
\Phi'_{0,\infty}(\cK) =  \cK\star_{0}\Eis_{0}.
}
\end{equation*}

\begin{lemma}\label{l: radon}
$\Phi'_{0,\infty}$ is an equivalence.
\end{lemma}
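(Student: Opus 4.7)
The strategy is to realize $\Phi'_{0,\infty}$ as the equivalence coming from Iwahori uniformization of $\Bun_{G}(\PP^{1},\{0,\infty\})$, under which $\Eis_{0}$ corresponds to the monoidal unit of $\cH^{\aff}_{G}$.

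First, trivializing a $G$-bundle on $\PP^{1}\setminus\{\infty\}=\AA^{1}$ and noting that an Iwahori reduction at $0$ is the same as a Borel in the fiber of the trivial bundle, Beauville--Laszlo gives an identification
\begin{equation*}
\Bun_{G}(\PP^{1},\{0,\infty\}) \simeq I_{0}\backslash G((t^{-1}))/I_{\infty},
\end{equation*}
where $I_{0}\subset G[t]$ and $I_{\infty}\subset G[[t^{-1}]]$ are the Iwahori subgroups determined by the fixed Borels at $t=0$ and $t=\infty$ respectively. Since prounipotent radicals of Iwahoris act trivially on constructible complexes, this yields an equivalence of dg categories
\begin{equation*}
\Sh_{!}(\Bun_{G}(\PP^{1},\{0,\infty\})) \simeq \cH^{\aff}_{G}
\end{equation*}
intertwining the Hecke modification action $\star_{0}$ with the left convolution action of $\cH^{\aff}_{G}$ on itself (after identifying the two Iwahoris via the inversion $t\leftrightarrow t^{-1}$).

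Next, by Example~\ref{ex:Eis 0}, $\Eis_{0}$ is the shifted constant sheaf $\QQ[-\dim B]$ on the closed stratum $\pt/B\hookrightarrow \Bun_{G}(\PP^{1},\{0,\infty\})$ classifying the trivial $G$-bundle with equal $B$-reductions at $0$ and $\infty$. Under the uniformization above, this stratum is the identity double coset, whose stabilizer $I_{0}\cap I_{\infty}=B$ matches the reduction of the Iwahori stabilizer modulo its prounipotent radical, and the shift by $-\dim B$ agrees with the standard normalization of the monoidal unit $\delta_{1}\in\cH^{\aff}_{G}$. Hence $\Eis_{0}\simeq \delta_{1}$ under the uniformization, and by associativity of convolution
\begin{equation*}
\Phi'_{0,\infty}(\cK) = \cK\star_{0}\Eis_{0} \simeq \cK\star \delta_{1} \simeq \cK,
\end{equation*}
so $\Phi'_{0,\infty}$ is the identity functor on $\cH^{\aff}_{G}$ under the uniformization, and in particular an equivalence.

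The hard part will be making the uniformization compatibility precise: one must check that the Hecke action defined in Section~\ref{ss:Hk mod} through the formal neighborhood $\DD_{0}$ and a choice of local coordinate is intertwined with left convolution on the global double Iwahori quotient, and that the shifts, twists, and orientation conventions for the inversion $t\leftrightarrow t^{-1}$ all conspire to make $\Eis_{0}$ match the strict monoidal unit rather than a twist by some invertible Wakimoto line bundle (the latter would still yield an equivalence, merely post-composed with $J_{\l}\star_{0}(-)$ for some $\l\in\L_{T}$).
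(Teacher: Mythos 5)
Your overall strategy is the same as the paper's — reduce to the known relation between $\Sh_!(\Bun_G(\PP^1,\{0,\infty\}))$ and $\cH^{\aff}_G$ — but the step you present as a tautology is precisely the nontrivial theorem being invoked, so as written there is a genuine gap. In the double-coset presentation the two subgroups are not both genuine Iwahoris of the same loop group: one of them ($I_\infty\subset G[[t^{-1}]]$, say) is an arc-group Iwahori, while the other ($I_0\subset G[t]$) comes from the global sections over $\AA^1$ and is an \emph{opposite} Iwahori. Its orbits on $\Fl$ give the Birkhoff-type stratification, whose closure relations are reversed relative to the Schubert stratification defining $\cH^{\aff}_G=\Sh_c(I\backslash \Fl_G)$. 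Passing between these two categories is not a matter of "prounipotent radicals acting trivially" (both groups are ind-pro-unipotent modulo $B$; that is not the issue); it is the Radon transform / long intertwining equivalence, which is exactly what the paper cites from \cite{Ytilt}. No change of variable $t\leftrightarrow t^{-1}$ removes this asymmetry: one side of the uniformization is always "global/polynomial" and the other "local/formal."

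Relatedly, your identification $\Eis_0\simeq\delta_1$ is incompatible with your claim that the identification intertwines $\star_0$ with left convolution. The correct (Radon) equivalence $R(\cK)=\cK\star_0 j_!\QQ[-\dim T]$ sends the unit $\delta$ to the shifted constant sheaf on the \emph{open} point $\pt/T$ (trivial bundle with transverse reductions), not to $\Eis_0$, which lives on the closed point $\pt/B$; a stratum-by-stratum "identity" matching cannot be a convolution-equivariant equivalence because the Ext algebras on the two sides glue according to opposite closure orders. The actual discrepancy between $\Phi'_{0,\infty}$ and $R$ is right convolution by $T_{w_0*}$ — an invertible object of $\cH^f_G$, but not a Wakimoto sheaf as you hedge — so your conclusion survives once the two points above are repaired: $\Phi'_{0,\infty}(\cK)=R(\cK\star T_{w_0*})$ is a composite of two equivalences. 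That is the paper's proof.
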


\begin{proof}
Let us relate $\Phi'_{0,\infty}$ to the Radon transform.

Let $j: \pt/T\incl \Bun_G(\PP^{1}, \{0,\infty\})$ be the open substack where the underlying bundle is trivial and the two Borel reductions at $0,\infty$ are transverse.  Acting by   $\cH^{\aff}_{G}$ on $j_{!}\QQ[-\dim T]$   at $0$
we recover the  Radon transform
\begin{equation*}
\xymatrix{
R: \cH^{\aff}_{G}\ar[r] &  \Sh_{!}(\Bun_{G}(\PP^1, \{0, \infty\}) & 
R(\cK)  = \cK\star_{0} j_{!}\QQ[-\dim T].
}
\end{equation*}
It is well-known that $R$ is an equivalence (see \cite[Corollary 4.1.5 and Section 5.2]{Ytilt} for example). Let $T_{w_{0}*}\in \cH^{f}_{G}$ denote the perverse sheaf which is the $*$-extension of the shifted constant sheaf from the open $B$-orbit in $\cB$. Then $T_{w_{0}*}\star_{0}j_{!}\QQ[-\dim T]\simeq \Eis_{0}$. Therefore
\begin{equation*}
\Phi'_{0,\infty}(\cK)=\cK\star_{0}\Eis_{0}\simeq (\cK\star T_{w_{0}*})\star_{0}j_{!}\QQ[-\dim T] = R(\cK\star T_{w_{0}*}).
\end{equation*}
In other words, $\Phi'_{0,\infty}$ is the composition of first convolution on $\cH^{\aff}$ on the right by $T_{w_{0}*}$ (which is an equivalence with inverse given by  convolution on the right by $T_{w_{0}!}$),  and then the Radon transform $R$ (which is again also an equivalence). This shows that $\Phi'_{0,\infty}$ is an equivalence.
\end{proof}

Let $\Loc_{\dG}(\PP^{1},\{0,\infty\})$ denote the (derived) moduli stack (over $\QQ$) of $\dG$-local systems on $\PP^{1}\setminus \{0,\infty\}$ equipped near $\{0, \infty\}$ with a Borel reduction with unipotent monodromy. Then $\Loc_{\dG}(\PP^{1},\{0,\infty\})$ admits  the  presentation as the substack of $(\tdN\times \tdN)/\dG$  given by imposing on pairs $(\tilde A_0, \tilde A_\infty)\in\tdN\times\tdN$ the equation $A_{0}A_{\infty}=1$ on the underlying group elements  inside of $\dG$. Therefore we have an isomorphism
\begin{equation*}
\xymatrix{
\iota: \St_{\dG}/\dG = (\tdN \times_{\dG} \tdN)/\dG \ar[r]^-\sim &  \Loc_{\dG}(\PP^{1},\{0,\infty\})
&
\iota(\wt A_{0}, \wt A_{\infty}) =(\wt A_{0}, \wt A_{\infty}^{-1})  
}\end{equation*}
where $\wt A_{\infty}^{-1}$ means we invert the group element $A_{\infty}$ while keeping the Borel containing it unchanged. 

Now introduce the equivalence given by the composition of equivalences
\begin{equation*}
\xymatrix{
\Phi_{0,\infty}: \Coh(\Loc_{\dG}(\PP^{1},\{0,\infty\}))\ar[r]^-{\iota^{*}} & \Coh^{\dG}(\St_{\dG})\ar[r]^-{\Phi^{\aff}}  & \cH^{\aff}_{G} 
}
\end{equation*}
$$
\xymatrix{
\ar[r]^-{\Phi'_{0,\infty}} & \Sh_{!}(\Bun_{G}(\PP^1, \{0, \infty\}).
}
$$

By construction,  $\Phi_{0,\infty}$ intertwines the $\Coh^{\dG}(\St_{\dG})$-action on $\Coh(\Loc_{\dG}(\PP^{1},\{0,\infty\}))$ by convolution at $0$  and the $\cH^{\aff}_{G}$-action on $\Sh_{!}(\Bun_{G}(\PP^1, \{0, \infty\})$ by the Hecke modifications $\star_{0}$, under the monoidal equivalence $\Phi^{\aff}$.
One can also show that $\Phi_{0,\infty}$ similarly  intertwines the the $\Coh^{\dG}(\St_{\dG})$-action on $\Coh(\Loc_{\dG}(\PP^{1},\{0,\infty\}))$ by convolution at $\infty$ and the  $\cH^{\aff}_{G}$-action on $\Sh_{!}(\Bun_{G}(\PP^1, \{0, \infty\})$ by the
Hecke modifications $\star_{\infty}$. We will not use this statement in the rest of the paper,
 only the following compatibilities.

\begin{lemma}\label{l:2 pts cal}
\begin{enumerate}
\item Let $\D^{-}: \tdN/\dG\to \Loc_{\dG}(\PP^{1},\{0,\infty\})$ be the anti-diagonal $\Delta^-(\wt A_{0}) = (\wt A_{0}, \wt A_{0}^{-1})$.  Then we have
 \begin{equation*}
 \xymatrix{
\Phi_{0,\infty} (\Delta^-_* \cO_{\tcN})\simeq \Eis_{0}
}
 \end{equation*}

\item For 
 $\cO_{\Loc}$  the derived structure sheaf of $\Loc_{\dG}(\PP^{1},\{0,\infty\})$, 
 we have 
 \begin{equation*}
 \xymatrix{
 \Phi_{0,\infty}(\cO_{\Loc})\simeq \Wh_{0,\infty}
 }
 \end{equation*}
\end{enumerate}
\end{lemma}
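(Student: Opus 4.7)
Part~(1) is a direct computation: the composition $\iota\circ\Delta$, with $\Delta:\tcN/\dG\to\St_\dG/\dG$ the usual diagonal, sends $\wt A$ to $(\wt A,\wt A)$ and then to $(\wt A,\wt A^{-1})=\Delta^-(\wt A)$. Hence $\iota^*\Delta^-_*\cO_{\tcN}\simeq\Delta_*\cO_{\tcN}$, which by Example~\ref{ex:Waki} corresponds under $\Phi^{\aff}$ to the Wakimoto sheaf $J_0$, i.e.\ the monoidal unit $\delta\in\cH^{\aff}_G$. Applying $\Phi'_{0,\infty}$ gives $\delta\star_0\Eis_0\simeq\Eis_0$.

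For part~(2), the strategy is to invoke the spectral description of Gaitsgory's central functor to identify $\Phi^{\aff}(\cO_{\St_\dG/\dG})$ with the central sheaf $Z(\delta_{\Gr})$, where $\delta_{\Gr}$ is the Satake object for the trivial representation of $\dG$. Indeed, pullback along $\pi:\St_\dG/\dG\to\pt/\dG$ of the trivial representation is precisely $\cO_{\St_\dG/\dG}$, matching the paper's formula for $Z$ on the spectral side. The compatibility \eqref{eq:Z comp} then yields
\begin{equation*}
\Phi^{\aff}(\cO_{\St_\dG/\dG})\star_0\Eis_0\;\simeq\;\Psi_0\bigl(\Hecke^{\sph}_{D_0^\times}(\delta_{\Gr},\Eis_0)\bigr),
\end{equation*}
reducing the claim to a nearby cycles computation for a Beilinson--Drinfeld family over $D_0^\times$. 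While the underlying modification is geometrically trivial (as $\delta_{\Gr}$ is the spherical unit), the derived structure of $\St_\dG$ is reflected in the dg enhancement of $Z(\delta_{\Gr})$, and the BD nearby cycles captures this via the universal unipotent monodromy as the spherical modification point collides with the Iwahori point $0\in S$.

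To finish the identification with $\Wh_{0,\infty}$, I would use the explicit formula $\Wh_{0,\infty}\simeq u_!q_!\Xi[\dim B]$ of Example~\ref{ex:Wh 2 pts}. On the open trivial-bundle locus $B^-\backslash\cB\hookrightarrow\Bun_G(\PP^1,\{0,\infty\})$, the sheaf $\Eis_0$ is the shifted skyscraper at $[B]$, which is the simple object in $\Perv_N(\cB)$ supported at the closed $N$-orbit. The BD nearby cycles should resolve this simple object into its projective cover in $\Perv_N(\cB)$, which is precisely the tilting sheaf $\Xi$; pushed through $u_!q_!$ this produces $\Wh_{0,\infty}$. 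Both sides being extensions by zero off of the trivial-bundle locus, this open identification suffices.

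The main obstacle, in my view, is rigorously tracking the monodromy structure on the nearby cycles $\Psi_0$ and matching it with the tilting structure of $\Xi$. An alternative approach that avoids the central functor altogether would proceed via a Wakimoto-type resolution: resolve $\cO_{\St_\dG/\dG}$ on the spectral side using $\Delta_*\cO_{\tcN}(\lambda)$-type objects, transport through $\Phi^{\aff}$ to a resolution by Wakimoto sheaves $J_\lambda$, convolve with $\Eis_0$ via Lemma~\ref{l:J on Eis} to obtain an explicit resolution of the left-hand side by Eisenstein sheaves $\Eis_\lambda$, and match this with an Eisenstein filtration of $\Wh_{0,\infty}$ obtained from the composition series of $\Xi$ in $\Perv_N(\cB)$.
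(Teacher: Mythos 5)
Your part~(1) is correct and is essentially the paper's argument. Part~(2), however, breaks at its first step: the identification $\Phi^{\aff}(\cO_{\St_{\dG}})\simeq Z(\delta_{\Gr})$ is false. Gaitsgory's functor $Z$ is monoidal and sends the Satake unit $\delta_{\Gr}$ to the monoidal unit $\delta\in\cH^{\aff}_G$ (over the support of $\delta_{\Gr}\boxtimes\delta$ the nearby-cycles family is a constant section of $\Fl_X\to X$, so there is no hidden derived or monodromic structure to extract), and under $\Phi^{\aff}$ the unit is $\D_*\cO_{\tdN}$, not $\cO_{\St_{\dG}}$. The paper's phrase ``pullback along $\St_{\dG}/\dG\to\pt/\dG$'' has to be read as $V\mapsto \D_*(\cO_{\tdN}\otimes V)$, i.e.\ tensoring the unit with the tautological bundle attached to $V$; reading it literally as $V\mapsto V\otimes\cO_{\St_{\dG}}$ is exactly what produces your identification. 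Your own remark that the modification by $\delta_{\Gr}$ is ``geometrically trivial'' is the tell: pushed through the compatibility~\eqref{eq:Z comp}, your route computes $\Psi_0$ of the constant family $\Eis_0\boxtimes\const{D_0^\times}$ and returns $\Eis_0$, which would contradict both part~(1) and the statement ($\Eis_0$ is a skyscraper at the closed point, while $\Wh_{0,\infty}$ is nonzero on the open stratum). The hoped-for resolution of the simple object into its projective cover $\Xi$ by nearby cycles is a real phenomenon, but it occurs for central sheaves of nontrivial representations (or for free-monodromic tilting constructions), not for the unit.

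The input the paper actually uses is a different part of Bezrukavnikov's theory: the equivalence $\Phi_{I^{0}I}$ between $\Sh_c(I^0\bs\Fl_G)$ and $\Coh^{\dG}(\St'_{\dG})$, $\St'_{\dG}=\wt{\dG}\times_{\dG}\tdN$, under which $\cO_{\St'_{\dG}}$ corresponds to the tilting sheaf $\Xi$, together with the compatibility matching $i^*$ (for $i:\St_{\dG}\incl\St'_{\dG}$) with the left adjoint $q_![2\dim B]$ of the forgetful functor $\cH^{\aff}_G\to\Sh_c(I^0\bs\Fl_G)$. This yields $\Phi^{\aff}(\cO_{\St_{\dG}})\simeq q_!\Xi[2\dim B]$, and then a direct convolution computation gives $q_!\Xi[2\dim B]\star_0\Eis_0\simeq u_!q_!\Xi[\dim B]\simeq\Wh_{0,\infty}$. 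Your fallback via Wakimoto resolutions also fails as stated: $\cO_{\St_{\dG}}$ is not set-theoretically supported on the diagonal copy of $\tdN$ inside $\St_{\dG}$, so it admits no resolution by the objects $\D_*\cO_{\tdN}(\l)$.
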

\begin{proof}

(1) Under the equivalence $\Phi^{\aff}$, the monoidal unit $\d \in \calH^\aff_G$, given by the constant sheaf on the closed $I$-orbit in $\Fl_G =  G((t))/I$, 
corresponds to $\D_{*}\cO_{\tdN}\in \Coh^{\dG}(\St_\dG)$ (see Section \ref{ex:Waki}). By construction, we also have $\iota^{*}(\D_{*}\cO_{\tdN})\simeq \D^{-}_{*}\cO_{\tdN}$, and $\Phi'_{0,\infty}(\d)=\d\star_{0}\Eis_{0}\simeq \Eis_{0}$, therefore $\Phi_{0,\infty}(\D^{-}_{*}\cO_{\tcN})\simeq \Eis_{0}$.

(2)  First, we claim that under the equivalence $\Phi^{\aff}$, the derived structure sheaf $\cO_{\St_{\dG}} \in \Coh^{\dG}(\St_\dG)$ corresponds to $q_{!}\Xi[2\dim B]\in \cH^{f}_{G} \subset \cH^\aff_G$ (see Example \ref{ex:Wh 2 pts} for notation). To see this, note that by \cite[Theorem 1 and beginning of Section 6]{B}, there is an equivalence 
\begin{equation*}
\xymatrix{
\Phi_{I^{0}I}: \Sh_{c}(I^{0}\bs \Fl_G)  \ar[r]^-\sim & \Coh^{\dG}(\St'_{\dG})
}
\end{equation*}
where $I^0 = I\times_B N \subset I$, and $\St'_{\dG}=\wt{\dG}\times_{\dG}\tdN$. Moreover, $\Phi_{I^0 I}(\cO_{\St'}) = \Xi$. On the other hand,  the equivalences $\Phi_{I^{0}I}$ and $\Phi^{\aff}$ are compatible:  the forgetful functor $\Forg:\cH^{\aff}_{G}\to \Sh_{c}(I^{0}\bs \Fl_G)$ corresponds to pushforward along $i:\St_{\dG}\incl \St'_{\dG}$. Therefore, $i^{*}$ corresponds to the left adjoint of $\Forg$, and this is given by $q_{!}[2\dim B]$ when restricted to $\Sh_{c}(N\bs \cB)\subset \Sh_{c}(I^{0}\bs \Fl_G)$. Hence $\cO_{\St_{\dG}}\simeq i^{*}\cO_{\St'_{\dG}}$ corresponds to $q_{!}\Xi[2\dim B]$
under the equivalence $\Phi^{\aff}$.

Therefore we have
\begin{equation*}
\Phi_{0,\infty}(\cO_{\Loc})=\Phi'_{0,\infty}(\Phi^{\aff}(\cO_{\St_{\dG}}))\simeq \Phi'_{0,\infty}(q_{!}\Xi[2\dim B])=q_{!}\Xi[2\dim B]\star_{0}\Eis_{0}.
\end{equation*}

Finally, if we view $q_{!}\Xi[2\dim B]\star_{0}\Eis_{0}$  as an object of $\Sh_{c}(B\bs G/B) \xr{u_{!}} \Sh_{!}(\Bun_{G}(\PP^1, \{0, \infty\})$,  it is equivalent to $q_{!}\Xi[2\dim B]\star \d[-\dim B]\simeq q_{!}\Xi[\dim B]$. Thus  $\Phi_{0,\infty}(\cO_{\Loc}) \simeq u_{!}q_{!}\Xi[\dim B]$,
and in turn  $u_{!}q_{!}\Xi[\dim B] \simeq \Wh_{0,\infty}$ as seen in \eqref{Wh 2 pts}.
\end{proof}


\section{Automorphic side: $\PP^1$, 3 ramification points, $G = \PGL(2), \SL(2)$}\label{automorphic}

Let $\PP^1 =\Proj \CC[x, y]$ be the projective line with  homogeneous coordinates $[x, y]$ and 
  coordinate $t = y/x$. 
 
 Fix the three points $S = \{0, 1, \infty \} \subset \PP^1$ where the coordinate $t$ takes the respective value.


\subsection{Moduli of bundles}\label{ss:mod bun}


Let $\Pic(\PP^1) \simeq \Bun_{\GL(1)}(\PP^1)$ denote the Picard stack of line bundles on $\PP^1$, and $\Vect_2(\PP^1) \simeq \Bun_{\GL(2)}(\PP^1)$ the moduli of rank $2$ vector bundles on $\PP^1$.

\subsubsection{$G= \PGL(2)$}\label{sss:Bun PGL2 pts}

By the exact sequence $1 \to \GL(1) \to \GL(2) \to \PGL(2) \to 1$  and the vanishing of the Brauer group of a curve over $\CC$, we have an isomorphism 
 $$
 \xymatrix{
\Vect_2(\PP^1)/\Pic(\PP^1) \ar[r]^-\sim &  \Bun_{\PGL(2)}(\PP^1). 
 }
 $$
Thus we can  represent $\PGL(2)$-bundles  by rank $2$ vector bundles up to
tensoring with a line bundle. 
There is a disjoint union decomposition 
$$
\xymatrix{
\Bun_{\PGL(2)}(\PP^1) = \Bun^{\ov{0}}_{\PGL(2)}(\PP^1)  \coprod \Bun^{\ov{1}}_{\PGL(2)}(\PP^1)
}
$$
given by the parity of the degree of a rank $2$ vector bundle.


The stack $\Bun_{\PGL(2)}(\PP^1, S)$ is the moduli of $\PGL(2)$-bundles on $\PP^1$ with $B$-reductions at the points
of $S = \{0, 1, \infty\}$.
 We can  represent objects of $\Bun_{\PGL(2)}(\PP^1, S)$ by $(\cE, \{\ell_{s}\}_{s\in S})$, where $\cE$ is a rank $2$ vector bundles on $\PP^{1}$ up to
tensoring with a line bundle, and $\ell_{s}$ is a line in the fiber $\cE_{s}$ for each  $s\in S$.

Let us list the isomorphism classes of objects of $\Bun_{\PGL(2)}(\PP^1, S)$. For each isomorphism class of $\cE\in \Bun_{\PGL(2)}(\PP^{1})$, we describe the poset of points in $\Bun_{\PGL(2)}(\PP^1, S)$ over it, where an arrow $x\to y$ means $y$ lies in the closure of $x$.

\begin{enumerate}

\item $\calE =  \calO_{\PP^1} \oplus \calO_{\PP^1}$,
$\Aut(\calE) \simeq \PGL(2)$, with  the
poset of configurations of lines
$$
\xymatrix{
& \ar[dl]\ar[d] c_0(\vn) \ar[dr] &&  \Aut \simeq \{1\} \\
\ar[dr] c_0(0, 1)   &\ar[d] c_0(0, \infty)  & \ar[dl] c_0(1, \infty) & \Aut \simeq T\simeq \GG_m   \\
&  c_0(S) & & \Aut \simeq B\simeq \GG_m \ltimes \GG_a
}
$$
where $c_0(R)$ denotes where the lines $\ell_r$ coincide, for $r\in R \subset S$.

%
%
%
%
%
%
%

%

\item $\calE =  \calO_{\PP^1}(1) \oplus \calO_{\PP^1}$, $ \Aut(\calE) \simeq  \GG_m \ltimes \GG_a^2$, with the
poset of  configurations of lines

$$
\xymatrix{
& & \ar[dll]\ar[dl]\ar[d] c_1(*) \ar[dr] &&  \Aut \simeq \{1\} \\
\ar[dr]\ar[drr]\ar[drrr]c_1(\vn) & \ar[dr]\ar[d] c_1(0)   &\ar[dr]\ar[dl] c_1(1)  & \ar[dl]\ar[d] c_1(\infty)&  \Aut \simeq T \simeq \GG_m  \\
& \ar[dr] c_1(0, 1)   &\ar[d] c_1(0, \infty)  & \ar[dl] c_1(1, \infty) &  \Aut \simeq B \simeq \GG_m \ltimes \GG_a \\
& &  c_1(S) & &  \Aut \simeq   \GG_m \ltimes \GG_a^2
}
$$
where $c_1(R)$ denotes where the lines $\ell_r$ lie in the summand $\calO_{\PP^1}(1)$,  for $r\in R \subset S$,
and in the summand $\calO_{\PP^1}$,  for $r\not \in R \subset S$. The generic configuration  $ c_1(*)$
denotes where none of the lines $\ell_s$ lie in $\calO_{\PP^1}(1)$, for $s\in S$, and also  they do not all lie in the image of any map $\cO_{\PP^{1}}\to \cE$ (as in the configuration $c_1(\vn)$).

\item $k\geq 2$, $\calE =  \calO_{\PP^1}(k) \oplus \calO_{\PP^1}$,  and  we have an exact  sequence
$$
\xymatrix{
1 \ar[r] & \GG_a^{k-2} \ar[r] & \Aut(\calE) \ar[r]^-{\ev_S} &  \GG_m \ltimes (\GG_a)^S \ar[r] & 1
}
$$
The poset of  configurations of lines is the product
$$
\xymatrix{
\prod_{s \in S} (\{\ell_s \subset  \calO_{\PP^1}\} \ar[r] &  \{\ell_s \subset \calO_{\PP^1}(k)\} )
}
$$
with automorphisms 
$$
\xymatrix{
1 \ar[r] & \GG_a^{k-2} \ar[r] & \Aut \ar[r]^-{\ev_S} &  \GG_m \ltimes (\GG_a)^{R} \ar[r] & 1
& R = \{s\in S \mid \ell_s \subset \calO_{\PP^1}(k)\}.
}
$$
Let us denote by $c_k(R)$ where
  the lines $\ell_r$ lie in the summand $\calO_{\PP^1}(k)$,  for $r\in R \subset S$,
  and in the summand $\calO_{\PP^1}$,  for $r\not \in R \subset S$.

\end{enumerate}

\subsubsection{$G= \SL(2)$}

Note that $1 \to \SL(2) \to \GL(2) \to \GL(1) \to 1$ allows us to represent $\SL(2)$-bundles  by rank $2$ vector bundles 
with trivialized determinant.

Let $\Bun_{\SL(2)}(\PP^1, S)$ denote the moduli of $\SL(2)$-bundles on $\PP^1$ with $B$-reductions at the points
of $S = \{0, 1, \infty\}$.
   We can  represent objects of $\Bun_{\SL(2)}(\PP^1, S)$ by  $(\cE,\tau, \{\ell_{s}\}_{s\in S})$ where $\cE$ is a rank $2$ vector bundle on $\PP^{1}$, $\tau:\cO_{\PP^{1}}\isom \det(\cE)$, and $\ell_{s}$ is a line in the fiber $\cE_{s}$ for $s\in S$.

Let us list the isomorphism classes of objects of $\Bun_{\SL(2)}(\PP^1, S)$ according to the isomorphism type of the underlying rank 2 bundles.

\begin{enumerate}

\item $\calE =  \calO_{\PP^1} \oplus \calO_{\PP^1}$
$\Aut(\calE) \simeq \SL(2)$, with the poset of configurations of lines
$$
\xymatrix{
& \ar[dl]\ar[d] c_0(\vn) \ar[dr] &&  \Aut \simeq Z(\SL(2)) \simeq \mu_2 \\
\ar[dr] c_0(0, 1)   &\ar[d] c_0(0, \infty)  & \ar[dl] c_0(1, \infty) & \Aut \simeq T\simeq \GG_m   \\
&  c_0(S) & & \Aut \simeq B\simeq \GG_m \ltimes \GG_a
}
$$
where $c_0(R)$ denotes where the lines $\ell_r$ coincide, for $r\in R \subset S$.

\item $k\geq 1$, $\calE =  \calO_{\PP^1}(k) \oplus \calO_{\PP^1}(-k)$,  and we have an exact sequence
$$
\xymatrix{
1 \ar[r] & \GG_a^{2k-2} \ar[r] & \Aut(\calE) \ar[r]^-{\ev_S} &   \GG_m \ltimes (\GG_a)^S \ar[r] & 1
}
$$
The poset of  of configurations of lines is product
$$
\xymatrix{
\prod_{s \in S} (\{\ell_s \subset \calO_{\PP^1}(-k)\} \ar[r] &  \{\ell_s \subset \calO_{\PP^1}(k)\} )
}
$$
with automorphisms
$$
\xymatrix{
1 \ar[r] & \GG_a^{2k-2} \ar[r] & \Aut \ar[r]^-{\ev_S} &  \GG_m \ltimes (\GG_a)^R \ar[r] & 1
& R= \{s\in S \mid \ell_s \subset \calO_{\PP^1}(k)\}.
}
$$
Let us denote by $c_{2k}(R)$ where
  the lines $\ell_r$ lie in the  summand $\calO_{\PP^1}(k)$,  for $r\in R \subset S$,
  and  in the summand $\calO_{\PP^1}(-k)$,  for $r\not \in R \subset S$.


\end{enumerate}


\subsection{Coarse symmetries}

\subsubsection{Atkin-Lehner modifications for $G= \PGL(2)$}

Atkin-Lehner modifications provide involutions exchanging the two connected components of $\Bun_{\PGL(2)}(\PP^1, S)$.
For $r\in S$, define the involution
$$
\xymatrix{
AL_r:\Bun_{\PGL(2)}(\PP^1, S) \ar[r] &\Bun_{\PGL(2)}(\PP^1, S) & AL_r(\calE, \{\ell_s\}_{s\in S})  = (\calE', \{\ell'_s\}_{s\in S})  
}
$$
where $\calE' \subset \calE$ is the  lower modification at $r\in \PP^1$ so that $\ell_r \subset \calE_r$ factors through $\calE'_r \subset \calE_r$,  the resulting map $\calE\to \calE_r$ induces  an isomorphism 
$$
\xymatrix{
\calE/\calE' \ar[r]^-\sim &  \calE_r/\ell_r
}$$
and $\ell'_r \subset \calE'_r$ is the image of the map $\calE(- r)_r  \to \calE'_r$,
and the other lines are unchanged $\ell'_s = \ell_s \subset \calE'_s = \calE_s$, for $s\not = r\in S$ .
Note the involution $AL_r$ exchanges the open points
$$
\xymatrix{
c_0(\vn) \ar@{<->}[r] & c_1(*).
}
$$

The Atkin-Lehner modifications generate a group $(\ZZ/2\ZZ)^{S}$  of order $8$. 
For $R\subset S$ of even size, the Atkin-Lehner modifications
$AL_{R}=\prod_{r\in R} AL_{r}$ preserve the two connected components, and generate a subgroup 
$(\ZZ/2\ZZ)^{S,ev}$ of order $4$.


\subsubsection{Central automorphisms for $G= \SL(2)$}

The inclusion $\mu_2 \simeq Z(\SL(2)) \subset \SL(2)$ of the center induces an automorphisms of the identity functor 
 of $\Bun_{\SL(2)}(\PP^1, S)$.


\subsection{Constructible sheaves}

%
%
%
%
%
%
%
%


\subsubsection{$G= \PGL(2)$}

Recall the points of  $\Bun_{\PGL(2)}(\PP^1, S)$ are discretely parameterized and
their automorphism groups are connected. We have the corresponding basis of objects of $\Sh_!(\Bun_{\PGL(2)}(\PP^1, S))$
given by the respective extensions by zero of constant sheaves:
\begin{equation*}
\xymatrix{
\calF_0(R) = j_! \const{c_0(R)} & R = \vn, \{0,1\}, \{0, \infty\}, \{1, \infty\}, S;
}
\end{equation*}
\begin{equation*}
\xymatrix{
\calF_1(R) = j_! \const{c_1(R)} & R \subset S \mbox{ or } R = *;
}
\end{equation*}
\begin{equation*}
\xymatrix{
\calF_k(R) = j_! \const{c_k(R)} & R \subset S, k\geq 2.
}\end{equation*}

We will also use the following notation
\begin{equation}\label{eq:IC k R}
\IC_{k}(R):=j_{!*}\const{c_{k}(R)}[-\dim \Aut(c_{k}(R))]
\end{equation}
to denote the IC-sheaf of the closure of $c_{k}(R)$. 

The decomposition 
into connected components
$$
\xymatrix{
\Bun_{\PGL(2)}(\PP^1, S) = \Bun^{\ov{0}}_{\PGL(2)}(\PP^1, S)  \coprod \Bun^{\ov{1}}_{\PGL(2)}(\PP^1, S)
}
$$
provides a direct sum decomposition
$$
\xymatrix{
\Sh_!(\Bun_{\PGL(2)}(\PP^1, S)) \simeq \Sh_!(\Bun^{\ov 0}_{\PGL(2)}(\PP^1, S))  \oplus \Sh_!(\Bun^{\ov 1}_{\PGL(2)}(\PP^1, S))
}
$$
The above basis of objects $\cF_{k}(R)$ belongs to $\Sh_!(\Bun^{\ov k}_{\PGL(2)}(\PP^1, S))$, where $\ov k=k\mod 2$.
For $r\in S$, note the Atkin-Lehner involution $AL_r$ exchanges the basis elements
 $$
\xymatrix{
\calF_0(\vn) \ar@{<->}[r] & \calF_1(*).
}
$$

\subsubsection{Whittaker sheaf for $G=\PGL(2)$} Let us record the form of the Whittaker sheaf.
Consider the open substacks of the odd component
\begin{equation*}
\xymatrix{
 c_1(*)    \ar@{^(->}[r]^-j & 
  c_1(*) \cup c_1(\vn)   \ar@{^(->}[r]^-i & 
\Bun^{\ov 1}_{\PGL(2)}(\PP^1, S)
}\end{equation*}
classifying respectively bundles $\calE \simeq \calO_{\PP^1}(1) \oplus \calO_{\PP^1}$ with generic lines $\ell_0, \ell_1, \ell_\infty$,
and more generally, lines $\ell_0, \ell_1, \ell_\infty$ with none
contained within $\calO_{\PP^1}(1)$. Then the Whittaker sheaf 
is given by
\begin{equation*}
\xymatrix{
\Wh_{S} = i_! j_* \const{ c_1(*)} \in  \Sh_!(\Bun^{\ov 1}_{\PGL(2)}(\PP^1, S)).
}\end{equation*}
Note the twist in the definition of  $\Wh_{S}$ disappears because $d_{S}=-1$ in this situation.

\subsubsection{$G= \SL(2)$}\label{sss:auto SL2}

Recall the points of  $\Bun_{\SL(2)}(\PP^1, S)$ are discretely parameterized and
their automorphism groups are connected except for the configuration $c_0(\vn)$ with $\Aut \simeq  Z(\SL(2)) \simeq \mu_2$. 
Let $\QQ^{\alt}_{c_{0}(\vn)}$ denote the rank one local system on $c_{0}(\vn)$ where the automorphism group $\mu_{2}$ acts by the sign character. We have the corresponding basis of objects of $\Sh_!(\Bun_{\PGL(2)}(\PP^1, S))$
given by the respective extensions by zero of constant sheaves and one additional sheaf $\QQ^{\alt}_{c_{0}(\vn)}$:
\begin{equation*}
\xymatrix{
\calF_0(R) = j_! \const{c_0(R)} & R = \vn, \{0,1\}, \{0, \infty\}, \{1, \infty\}, S;
}\end{equation*}
\begin{equation*}
\xymatrix{
\calF_0(\vn)^{\alt} = j_! \QQ^{\alt}_{c_0(\vn)}; 
}\end{equation*}
\begin{equation*}
\xymatrix{
\calF_{2k}(R) = j_! \const{c_{2k}(R)} & R \subset S, k\geq 1.
}\end{equation*}


The canonical automorphisms of the identity functor of $\Bun_{\SL(2)}(\PP^1, S)$ given by $\mu_2 \simeq Z(\SL(2))$ 
provides a direct sum decomposition
$$
\xymatrix{
\Sh_!(\Bun_{\SL(2)}(\PP^1, S)) \simeq \Sh^{\triv}_!(\Bun_{\SL(2)}(\PP^1, S))  \oplus \Sh^{\alt}_!(\Bun_{\SL(2)}(\PP^1, S))
}
$$
determined by whether the induced action of $\mu_2 \simeq Z(\SL(2))$ on sheaves is trivial or alternating.

The second summand admits an equivalence
$$
\xymatrix{
\Sh^{\alt}_!(\Bun_{\SL(2)}(\PP^1, S))\simeq \Vect.
}
$$
since all of its objects are finite complexes built out of shifts of  $\calF_0(\vn)^{\alt}$ whose automorphisms are scalars.


\subsubsection{Relation between $G= \PGL(2)$ and $G=\SL(2)$}\label{sss:cons shv rel}

The natural map $\SL(2) \to \PGL(2)$ induces a map 
\begin{equation*}
\xymatrix{
p:\Bun_{\SL(2)}(\PP^1, S)\ar[r] &  \Bun^{\ov 0}_{\PGL(2)}(\PP^1, S)\subset \Bun_{\PGL(2)}(\PP^1, S)
}
\end{equation*}
which sends $c_{2k}(R)\in \Bun_{\SL(2)}(\PP^1, S)$ to the same-named point $c_{2k}(R)$ in $\Bun^{\ov 0}_{\PGL(2)}(\PP^1, S)$, for any $k\ge0$ and $R\subset S$.

Pullback provides an equivalence
\begin{equation}\label{eq:PGL ev SL2}
\xymatrix{
p^*:\Sh_!(\Bun^{\ov{0}}_{\PGL(2)}(\PP^1, S))\ar[r]^-\sim &  \Sh^{\triv}_!(\Bun_{\SL(2)}(\PP^1, S))
}
\end{equation}
that acts on the above basis by
\begin{equation*}
\xymatrix{
p^*(\calF_0(R)) \simeq \calF_0(R) & R = \vn, \{0,1\}, \{0, \infty\}, \{1, \infty\}, S;
}
\end{equation*}
\begin{equation*}
\xymatrix{
p^*(\calF_{2k}(R)) \simeq \calF_{2k}(R) & R\subset S, k\geq 1.
}
\end{equation*}

Thus using the  prior decompositions and Atkin-Lehner involutions, we see that to understand any of the above categories, it suffices to understand for example  $Sh_!(\Bun^{\ov{1}}_{\PGL(2)}(\PP^1, S))$. We prefer the odd component of $\Bun_{\PGL(2)}(\PP^1, S)$ since it supports the Whittaker sheaf.



\section{Spectral side: $\PP^1$, 3 ramification points, $\dG = \SL(2), \PGL(2)$}

Continue with $\PP^1 =\Proj (k[x, y])$  the projective line with  homogeneous coordinates $[x, y]$ and 
  coordinate $t = y/x$, and
  the three points $S = \{0, 1, \infty \} \subset \PP^1$ where the coordinate $t$ takes the respective value.

\subsection{Moduli of local systems}


\subsubsection{General definition}
We start with a general reductive group $\dG$ over $\QQ$. Let $\Loc_{\dG}(\PP^{1}, S)$ be the Betti moduli of $\dG$-local systems on $\PP^1 \setminus S$ with $B^\vee$-reductions near $S$ with trivial induced $T^\vee$-monodromy.
By choosing a point $u_{0}$ in $\PP^1 \setminus S$ and a based loop $\gamma_{s}$ around $s\in S$ for each $s\in S$ such that $\gamma_{0}\gamma_{1}\gamma_{\infty}=1$ in $\pi_{1}(\PP^1 \setminus S, u_{0})$, we obtain the presentation
$$
\xymatrix{
\Loc_{\dG}(\PP^1, S) \simeq (\tilde\calN^\vee)^{S,\prod=1} /\dG.
}
$$
Here, $(\tilde\calN^\vee)^{S,\prod=1}$ is the derived fiber of $1$ of the map
\begin{equation}\label{mult S}
\xymatrix{(\tdN)^{S}\ar[r]^-{\mu^{S}} & (\dcN)^{S}\ar[r]^-{\mult} & \dG} 
\end{equation}
and the map ``mult'' takes $(A_{0},A_{1}, A_{\infty})$ to $A_{0}A_{1}A_{\infty}$.

\subsubsection{$\dG= \SL(2)$}\label{sss:Loc comp}
In this case, $(\tilde\calN^\vee)^{S,\prod=1}$ is the derived subscheme of $(\tdN)^{S}$ classifying triples of pairs $(A_s,\ell_s)_{s\in S}$ consisting of a matrix $A_s\in \SL(2)$ and an eigenline $A_s(\ell_s) \subset \ell_s$ with trivial eigenvalue $A_s|_{\ell_s} = 1$, and the matrices satisfy the equation $A_0 A_1 A_\infty = 1$ inside of $\SL(2)$.


To write explicit local equations for $(\tilde \calN^\vee)^{S, \prod = 1}$, we may apply the $\SL(2)$-symmetry to assume without loss of generality that $\ell_\infty = [1:0]$, $\ell_{0}=[1:x]$ and $\ell_{1}=[1:y]$. Then the  three matrices take the form
\begin{equation*}
\xymatrix{
A_0=\mat{1-ax}{a}{-ax^{2}}{1+ax} &  A_1=\mat{1-by}{b}{-by^{2}}{1+by} 
}
\end{equation*}
\begin{equation*}
\xymatrix{
A_\infty^{-1}= A_0 A_1 = \mat{1-ax-by+aby(x-y)}{a+b-ab(x-y)}{-ax^{2}-by^{2}+abxy(x-y)}{1+ax+by-abx(x-y)}
}
\end{equation*}
such that $A_\infty$ is of the form $\mat{1}{*}{0}{1}$. Since $\det(A_\infty)=1$, we need only impose the equations
\begin{equation*}
\xymatrix{
1-ax-by+aby(x-y)=1,
&
-ax^{2}-by^{2}+abxy(x-y)=0.
}\end{equation*}
These in turn are equivalent to the equations
\begin{equation*}
\xymatrix{
ax+by=0, & ax^{2}+by^{2}=0.
}\end{equation*}

We conclude that $(\tilde \calN^\vee)^{S, \prod = 1}$ is a lci classical scheme (i.e., not derived) with 5 irreducible components:
\begin{enumerate}
\item $A_0=A_1=A_\infty=1$. This component is isomorphic to $\PP^{1}\times\PP^{1}\times\PP^{1}$. Local equation: 
$$a=b=0.$$
\item  $A_0=1$ (hence $A_1=A_\infty^{-1}$) and $\ell_{1}=\ell_{\infty}$. This component is isomorphic to $\PP^{1}\times\tdN$. Local equation:
$$a=0, y=0.$$
\item  $A_1=1$  (hence $A_0=A_\infty^{-1}$) and $\ell_{0}=\ell_{\infty}$. This component is isomorphic to $\PP^{1}\times\tdN$. Local equation:
$$b=0, x=0.$$
\item  $A_\infty=1$  (hence $A_0=A_1^{-1}$) and $\ell_{0}=\ell_{1}$. This component is isomorphic to $\PP^{1}\times\tdN$. Local equation:
$$a+b=0, x=y.$$
\item  $A_0, A_1, A_\infty$ all lie in a single Borel. Note this does not mean that $\ell_{0},\ell_{1}$, $\ell_{\infty}$ are the same; in fact, this component is non-reduced since $A_0,A_1,A_\infty$ fix  $\ell_{0},\ell_{1}$, $\ell_{\infty}$ respectively.  
Local equation:
$$x^{2}=0, y^{2}=0, xy=0, ax+by=0.$$ 
Note for $a,b$ not both zero, there is a unique infinitesimal direction for $(x,y)$.  The reduced structure of this component is isomorphic to the total space of $\cO(-2)\oplus\cO(-2)$ over $\PP^{1}$, and we denote it as $\tdN_{\Delta}$.
\end{enumerate}

If we view $(\tdN)^{S}$ as the cotangent bundle of $(\PP^{1})^{S}$, the 5 components listed above, after passing to reduced structures, are exactly the conormal bundles of various partial diagonals in $(\PP^{1})^{S}$. For this reason, we introduce the following notation. For a subset $R\subset S$ with $\#R\ne1$, we denote by $\Delta_{R}$ the partial diagonal of $(\PP^{1})^{S}$ where the $R$-components are equal. For example, $\Delta_{\varnothing}=(\PP^{1})^{S}$. Let $\L_{R}\subset T^{*}(\PP^{1})^{S}\simeq (\tdN)^{S}$ be the conormal bundle of $\Delta_{R}$. Then the reduced structure of the 5 components of $((\tdN)^{S})^{\prod=1}$ are, in the order listed above, $\L_{\varnothing}, \L_{1,\infty}, \L_{0,\infty}, \L_{0,1}$ and $\L_{S}=\tdN_{\Delta}$.



\subsubsection{$\dG=\PGL(2)$} The stack $\Loc_{\PGL(2)}(\PP^{1}, S)$ has two connected components. In fact, for $\dG=\PGL(2)$, the Springer resolution $\tdN$ and the unipotent variety $\dcN$ are the same as those of $\SL(2)$. Therefore the map ``mult'' in ~\eqref{mult S} factorizes as
\begin{equation*}
\xymatrix{(\dcN)^{S}\ar[r]^-{\wt\mult} & \SL(2)\ar[r] & \PGL(2).}
\end{equation*}
Hence, according to whether the product of three elements in $\dcN$ is $1$ or $-1$ in $\SL(2)$, we have a decomposition of $\Loc_{\PGL(2)}(\PP^{1}, S)$
\begin{equation}\label{Loc 2 cpts}
\xymatrix{
\Loc_{\PGL(2)}(\PP^1, S) = \Loc^{\ov 0}_{\PGL(2)}(\PP^1, S)  \coprod \Loc^{\ov 1}_{\PGL(2)}(\PP^1, S)
}\end{equation}
where
\begin{equation*}
\xymatrix{\Loc^{\ov 0}_{\PGL(2)}(\PP^1, S)=(\tdN)^{S, \wt \prod=1}/\PGL(2)}
\end{equation*}
\begin{equation*}
\xymatrix{\Loc^{\ov 1}_{\PGL(2)}(\PP^1, S)=(\tdN)^{S, \wt \prod=-1}/\PGL(2).}
\end{equation*}

The natural map $\SL(2) \to \PGL(2)$ induces a map 
\begin{equation}\label{eq:Loc p}
\xymatrix{
p:\Loc_{\SL(2)}(\PP^1, S)\ar[r] &  \Loc^{\ov 0}_{\PGL(2)}(\PP^1, S)\subset \Loc_{\PGL(2)}(\PP^1, S)
}
\end{equation}
which in turn induces an equivalence
\begin{equation*}
\xymatrix{
\Loc_{\SL(2)}(\PP^1, S)\ar[r]^-\sim &  \Loc^{\ov 0}_{\PGL(2)}(\PP^1, S) \times_{\pt/\PGL(2)} (\pt/\SL(2))
}
\end{equation*}

The odd component of $\Loc_{\PGL(2)}(\PP^1, S)$ actually reduces to a single point.


\begin{lemma}\label{l:Loc odd} The derived scheme $(\tdN)^{S, \wt \prod=-1}$ is a trivial torsor for $\PGL(2)$. In particular, 
$$\Loc^{\ov 1}_{\PGL(2)}(\PP^1, S)\cong \Spec \QQ.$$
\end{lemma}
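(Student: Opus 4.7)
The plan is to exhibit a $\QQ$-point of $(\tdN)^{S,\wt\prod=-1}$ and show that the resulting orbit map $\PGL(2)\to(\tdN)^{S,\wt\prod=-1}$ is an isomorphism of derived schemes. Once bijectivity on classical points is established and the derived structure is shown to be smooth, the statement follows.

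To produce the $\QQ$-point and to analyze classical points, I would repeat the local coordinate calculation of the preceding subsection with the constraint $A_0A_1A_\infty=-I$ in place of $A_0A_1A_\infty=1$. Fixing $\ell_\infty=[1{:}0]$, $\ell_0=[1{:}x]$, $\ell_1=[1{:}y]$ with $x,y$ distinct and nonzero, the equations $A_0A_1=\mat{-1}{e}{0}{-1}$ reduce to $ab(x-y)^2=4$ together with two linear conditions on $(ax,by)$, which admit a unique solution. This both produces an explicit $\QQ$-point and shows that, given eigenlines $\ell_s$ in the normalized configuration, the triple $(A_0,A_1,A_\infty)$ is uniquely determined.

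Next I would show that every classical point of $(\tdN)^{S,\wt\prod=-1}$ has three pairwise distinct eigenlines. If all three coincide, the $A_s$ lie in a common unipotent subgroup of $\SL(2)$ and so does their product, which therefore cannot equal $-I$. If exactly two coincide, say $\ell_0=\ell_1$, then $A_0$ and $A_1$ both lie in the unipotent radical of the Borel stabilizing $\ell_0$, so $A_0A_1$ is unipotent with trace $2$; thus $A_\infty^{-1}=-A_0A_1$ has trace $-2$, contradicting the unipotence of $A_\infty$. With three distinct eigenlines, $\PGL(2)$ acts simply transitively on configurations of eigenlines, and combined with the uniqueness above, $\PGL(2)$ acts freely and transitively on classical points.

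Finally, I would upgrade this to an isomorphism of derived schemes by a dimension count. Let $m:(\tdN)^S\to\SL(2)$ denote the composite of the Springer resolutions with $\wt\mult$. Both source and target are smooth of dimensions $6$ and $3$, and the classical fiber $m^{-1}(-I)\cong\PGL(2)$ is smooth of the expected dimension $3=6-3$. It follows that $m$ is smooth along this fiber, so the derived fiber agrees with the classical one: $(\tdN)^{S,\wt\prod=-1}\simeq\PGL(2)$ as derived schemes. The quotient by $\PGL(2)$ is therefore $\Spec\QQ$. The main obstacle is the eigenline distinctness analysis; once that is handled, the rest follows from the preceding subsection's calculation (with the sign change) and a standard dimension count.
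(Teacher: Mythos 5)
Your argument is correct and follows essentially the same route as the paper: show the three eigenlines must be pairwise distinct (via trace considerations), use the $\PGL(2)$-action to normalize the configuration, and solve the resulting equations explicitly to find a unique rational solution, exhibiting $(\tdN)^{S,\wt\prod=-1}$ as a trivial torsor. Your closing dimension count justifying that the derived fiber is classical is a point the paper's proof leaves implicit, and is a welcome addition.
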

\begin{proof}
Let $(A_{s},\ell_{s})_{s\in S}$ be a point of $(\tdN)^{S, \wt \prod=-1}$. We view $A_{s}$ as unipotent elements in $\SL(2)$, then $A_{0}A_{1}A_{\infty}=-1\in \SL(2)$. It is easy to see that none of $A_{s}$ can be $1$, hence each line $\ell_{s}$ is determined by $A_{s}$. It is also easy to see that no two lines are equal, hence using the $\PGL(2)$-action we may arrange $\ell_{0}=[1:0], \ell_{1}=[0:1]$ and using the remaining $\dT$-conjugacy we may arrange uniquely 
\begin{equation*}
\xymatrix{A_{0}=\mat{1}{1}{0}{1} & A_{1}=\mat{1}{0}{c}{1}. }
\end{equation*}
Then we have
\begin{equation*}
\xymatrix{A_{\infty}=-A_{1}^{-1}A_{0}^{-1}=\mat{-1}{1}{c}{-1-c}}
\end{equation*}
which is unipotent if and only if $c=-4$. This shows that $(\tdN)^{S, \wt \prod=-1}$ is a torsor for $\PGL(2)$ with a rational point.
\end{proof}

\begin{remark}\label{r:Legendre} The unique point in $\Loc^{\ov 1}_{\PGL(2)}(\PP^1, S)$ corresponds to a rank 2 local system on $\PP^{1}\setminus S$ with nontrivial unipotent monodromy at $0$ and $1$, and monodromy with a single Jordan block of eigenvalue $-1$ at $\infty$. This local system arises from the universal Tate module of the Legendre family of elliptic curves over $\PP^{1}\setminus\{0,1,\infty\}$ given by $y^{2}=x(x-1)(x-t), t\in \PP^{1}\setminus\{0,1,\infty\}$.
\end{remark}



\subsection{Comparison with linear and de Rham moduli} In this subsection $\dG=\SL(2)$. We will show that $\Loc_{\SL(2)}(\PP^{1}, S)$ is isomorphic to its linearized version and its de Rham version which traditionally appears in the formulation of the Geometric Langlands correspondence.

\subsubsection{Linearized version} 

Let $\Lin_{\SL(2)}(\PP^1, S)$ denote the linearized version of $\Loc_{\SL(2)}(\PP^1, S)$
defined by the presentation
$$
\xymatrix{
\Lin_{\SL(2)}(\PP^1, S) := T^{*}((\PP^{1})^{S}/\SL(2)) = (\tilde\calN^\vee)^{S, \sum=0} /\SL(2)
}
$$
where we regard $\tdN$ as the Springer resolution of the nilpotent cone in $\frg^{\vee}=\sl(2)$, and
 impose that the sum of the Lie algebra elements be zero.
Thus a point of $\Lin_{\SL(2)}(\PP^1, S)$ is a triple of pairs $ (B_s,\ell_s)_{s\in S}$ consisting of a matrix $B_s\in \sl(2)$ and an eigenline $B_s(\ell_s) \subset \ell_s$ with trivial eigenvalue $B_s|_{\ell_s} = 0$, and the matrices satisfy the equation $B_0 + B_1 +  B_\infty = 0$ inside of $\sl(2)$.

The local equations for $(\tilde\calN^\vee)^{S, \sum=0}$ are exactly the same as those derived above for
$(\tilde\calN^\vee)^{S, \prod=1}$  except  now $B_0, B_1$, $B_\infty$ are nilpotent rather than unipotent matrices
\begin{equation*}
\xymatrix{
B_0=\mat{-ax}{a}{-ax^{2}}{ax} & B_1=\mat{-by}{b}{-by^{2}}{by} 
}
\end{equation*}
\begin{equation*}
\xymatrix{
-B_\infty=B_0+B_1=\mat{-ax-by}{a+b}{-ax^{2}-by^{2}}{+ax+by}
}
\end{equation*}
with the requirement that $B_\infty$ is of the form $\mat{0}{*}{0}{0}$ imposing the equations
\begin{equation*}
\xymatrix{
ax+by=0 &  ax^{2}+by^{2}=0
}
\end{equation*}

Thus we can construct an $\SL(2)$-equivariant isomorphism
\begin{equation*}
\xymatrix{
 (\tilde\calN^\vee)^{S, \prod=1}  \ar[r]^-\sim & (\tilde\calN^\vee)^{S, \sum=0} 
}
\end{equation*}
by the assignment
\begin{equation*}
\xymatrix{
(A_0, A_1, A_\infty,  \ell_0, \ell_1, \ell_\infty)
\ar@{|->}[r] &  (A_0-1, A_1-1, 2-A_0-A_1, \ell_{0}, \ell_{1}, \ell_{\infty})
}\end{equation*}
Note that $A_\infty-1\neq 2-A_0-A_1$ as they differ in local coordinates by
\begin{equation*}
\mat{0}{0}{ab(x-y)}{0}
\end{equation*}
 though nevertheless $(2-A_0-A_1)\ell_{\infty}=0$.

We could just as well choose either of  the alternative isomorphisms given by the assignments
 \begin{equation*}
\xymatrix{
(A_0, A_1, A_\infty,  \ell_0, \ell_1, \ell_\infty)
\ar@{|->}[r] &  (2-A_1-A_\infty, A_1-1, A_\infty-1, \ell_{0}, \ell_{1}, \ell_{\infty})
}\end{equation*}
 \begin{equation*}
\xymatrix{
(A_0, A_1, A_\infty,  \ell_0, \ell_1, \ell_\infty)
\ar@{|->}[r] &  (A_0 - 1, 2-A_0-A_\infty, A_\infty-1, \ell_{0}, \ell_{1}, \ell_{\infty})
}\end{equation*}
They give different isomorphisms reflecting the fact that $(\tilde\calN^\vee)^{S,\prod=1}$ has automorphisms that infinitesimally move points in its non-reduced component.


\subsubsection{de Rham moduli} 
Let $\dRLoc_{\SL(2)}(\PP^1, S)$ denote the  de Rham version of $\Loc_{\SL(2)}(\PP^1, S)$
classifying data
$((\calE, \tau, \{\ell_{s}\}_{s\in S}, \nabla)$ where $\calE$ is a rank 2 vector bundle on $\PP^{1}$
 equipped with  a line  $\ell_{s}\subset\calE_{s}$ at each $s\in S$, and a meromorphic
connection 
\begin{equation*}
\xymatrix{
\nabla: \calE\ar[r] & \calE\otimes\Om_{\PP^1}(S)
}
\end{equation*}
 with regular singularity at each $s\in S$, whose residue $\Res_{s}\nabla$ is trivial when restricted to  $\ell_{s}$,
 and
 \begin{equation*}
\xymatrix{
\tau : \calO_{\PP^1} \ar[r]^-\sim & \det(\calE)
}
\end{equation*}
is a $\nabla$-flat
  trivialization of the determinant.

\begin{lemma} There is canonical isomorphism from the de Rham moduli to linearized moduli
\begin{equation*}
\xymatrix{
  \dRLoc_{\SL(2)}(\PP^1, S) \ar[r]^-\sim &\Lin_{\SL(2)}(\PP^1, S) 
}
\end{equation*}
\end{lemma}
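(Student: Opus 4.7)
The plan is to construct an explicit $\SL(2)$-equivariant map from $(\tilde\calN^\vee)^{S,\sum=0}$ to $\dRLoc_{\SL(2)}(\PP^1,S)$ and show it descends to an equivalence of stacks. Given a point $(B_{0},B_{1},B_{\infty},\ell_{0},\ell_{1},\ell_{\infty})$, associate the trivial bundle $\calE=\cO_{\PP^1}^{\oplus 2}$ with its standard trivialization $\tau$ of the determinant, the lines $\ell_{s}\subset \calE_{s}$, and the logarithmic connection
$$
\nabla \;=\; d + \frac{B_{0}\,dt}{t} + \frac{B_{1}\,dt}{t-1}.
$$
Passing to the coordinate $u=1/t$ at $\infty$, one checks $\Res_{\infty}\nabla=-B_{0}-B_{1}=B_{\infty}$, using the defining equation $\sum_{s}B_{s}=0$. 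The nilpotency of $B_{s}$ with $\ell_{s}\subset\ker B_{s}$ is precisely the residue condition on $\nabla$, and the trace-free condition guarantees that $\tau$ is $\nabla$-flat. The $\SL(2)$-action by simultaneous conjugation and flag rotation corresponds to changing the trivialization of $\calE$ by a global constant matrix, so the construction descends to $\Lin_{\SL(2)}(\PP^1,S)$.

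To produce the inverse, the key input is that every point $(\calE,\tau,\{\ell_{s}\},\nabla)$ has $\calE\simeq \cO_{\PP^1}^{\oplus 2}$. Granting this, picking any trivialization compatible with $\tau$ writes $\nabla=d+A$ with $A\in \sl(2)\otimes \Om^{1}_{\PP^1}(S)$; a partial-fraction decomposition forces $A=B_{0}\,dt/t+B_{1}\,dt/(t-1)$, since any additional polynomial summand in $t$ would contribute a pole of order $\geq 2$ at $\infty$. Extracting $B_{s}=\Res_{s}\nabla$ yields a point of $(\tilde\calN^\vee)^{S,\sum=0}$ well-defined up to the residual $\SL(2)$-action, producing a quasi-inverse to the construction above. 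Full faithfulness at the level of automorphisms is automatic: automorphisms of $(\cO_{\PP^1}^{\oplus 2},\tau)$ are global $\SL(2)$ matrices, and the conditions of preserving each $\ell_{s}$ and commuting with $\nabla$ match exactly the stabilizer of $(B_{s},\ell_{s})$ in $\SL(2)$.

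The main obstacle — and the only nontrivial step — is showing $\calE\simeq \cO_{\PP^1}^{\oplus 2}$. Since $\det\calE\simeq\cO_{\PP^1}$, we may write $\calE\simeq \cO_{\PP^1}(k)\oplus\cO_{\PP^1}(-k)$ for some $k\geq 0$. Assume for contradiction $k\geq 1$ and consider the destabilizing subbundle $\cO_{\PP^1}(k)\hookrightarrow \calE$. The second fundamental form of $\nabla$ along this subbundle is an $\cO_{\PP^1}$-linear map
$$
\cO_{\PP^1}(k)\longrightarrow \cO_{\PP^1}(-k)\otimes \Om^{1}_{\PP^1}(S)\simeq \cO_{\PP^1}(1-k),
$$
which must vanish for $k\geq 1$ since $\Hom(\cO_{\PP^1}(k),\cO_{\PP^1}(1-k))=\upH^{0}(\PP^1,\cO_{\PP^1}(1-2k))=0$. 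Hence $\cO_{\PP^1}(k)$ is $\nabla$-stable, and the residue $\Res_{s}\nabla|_{\cO_{\PP^1}(k)_{s}}$, being the scalar by which the nilpotent operator $\Res_{s}\nabla$ acts on one of its invariant lines, vanishes. Thus $\nabla$ restricts to a holomorphic connection on $\cO_{\PP^1}(k)$, forcing $k=0$ by the Atiyah obstruction and giving the desired triviality.

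Finally, matching derived structures is automatic: both sides are cut out by identifying $\Res_{\infty}\nabla$ with the prescribed value $-B_{0}-B_{1}$, which is the same derived equation $\sum_{s}B_{s}=0$ appearing in the linearized presentation. This yields the canonical isomorphism $\dRLoc_{\SL(2)}(\PP^1,S)\isom \Lin_{\SL(2)}(\PP^1,S)$.
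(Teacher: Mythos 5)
Your proof is correct and follows essentially the same route as the paper: the crucial step in both is showing $\calE\simeq\cO_{\PP^1}^{\oplus 2}$ by observing that the second fundamental form of the destabilizing subbundle vanishes for degree reasons and that nilpotency of the residues then leaves a holomorphic connection on a nonzero-degree line bundle, a contradiction. Writing the map as an explicit logarithmic connection $d+B_0\,dt/t+B_1\,dt/(t-1)$ rather than as the residue map is just the inverse direction of the same identification, so the two arguments coincide.
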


\begin{proof}
First, for any $((\calE, \tau, \{\ell_{s}\}_{s\in S}, \nabla)\in\dRLoc_{\SL(2)}(\PP^1, S)$, we have $\calE\simeq\calO^2_{\PP^{1}}$.
Otherwise, there is an embedding $\calO_{\PP^{1}}(n)\incl \calE$ with quotient $\calO_{\PP^{1}}(-n)$, for some $n>0$. The composition
\begin{equation*}
\xymatrix{
\calO_{\PP^{1}}(n)\ar@{^(->}[r] &  \calE\ar[r]^-{\nabla} & \calE\otimes\Om_{\PP^{1}}(S)\ar[r] & \calE/\calO_{\PP^{1}}(n)\otimes\Om_{\PP^{1}}(S)\simeq \calO_{\PP^{1}}(1-n)
}
\end{equation*}
is $\calO_{\PP^{1}}$-linear, hence must be zero since $n>1-n$. Thus $\nabla$ restricts to a connection on $\calO_{\PP^{1}}(n)$ without poles (because the residues of $\nabla$ are nilpotent), which is impossible since $n\neq0$.

Next, fix an isomorphism $(\calE, \tau)\simeq (\calO^{2}_{\PP^{1}}, \tau_0)$ with the trivial bundle (such choices form an $\SL(2)$-torsor). 
The trivial bundle carries the  de Rham connection $d$, and  any  $((\calE, \tau, \{\ell_{s}\}_{s\in S}, \nabla)\in\dRLoc_{\SL(2)}(\PP^1, S)$
is equivalent to one of the form
$(\calO^{2}_{\PP^{1}},\{\ell_{s}\}_{s\in S}, \tau_{0},\nabla=d+\varphi)\in\dRLoc_{\SL(2)}(\PP^1, S)$
 where $\varphi:\calO^{2}_{\PP^{1}}\to\calO^{2}_{\PP^{1}}\otimes\Om_{\PP^{1}}(S)$ is a traceless $\calO$-linear map whose restriction to $\ell_{s}$, for each $s\in S$,  is trivial.

Now, define the sought-after isomorphism by the $\SL(2)$-equivariant assignment
\begin{equation*}
\xymatrix{
(\calO^{2}_{\PP^{1}},\{\ell_{s}\}_{s\in S}, \tau_{0}, \nabla=d+\varphi)
\ar[r] & 
(\Res_{0}\varphi,\ell_{0},\Res_{1}\varphi, \ell_{1},\Res_{\infty}\varphi,\ell_{\infty})\in (\tdN)^{S}
}
\end{equation*}
whose image lies in $(\tdN)^{S,\sum=0}$ thanks to the residue theorem for curves 
\begin{equation*}
\xymatrix{
\Res_{0}\varphi+\Res_{1}\varphi + \Res_{\infty}\varphi=0.
}
\end{equation*}
\end{proof}

\begin{cor}\label{c:spec mod same} The Betti moduli $\Loc_{\SL(2)}(\PP^1, S)$, its linearized version $\Lin_{\SL(2)}(\PP^1, S)$,
and the de Rham moduli $\dRLoc_{\SL(2)}(\PP^1, S)$ 
are all isomorphic as stacks over the classifying stack of $\SL(2)$.
\end{cor}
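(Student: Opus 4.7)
The plan is to combine the isomorphism produced by the preceding lemma, namely $\dRLoc_{\SL(2)}(\PP^1,S) \isom \Lin_{\SL(2)}(\PP^1,S)$, with an explicit $\SL(2)$-equivariant isomorphism between the Betti moduli and its linearization. The candidate map is already written down in the preamble to the corollary: the polynomial assignment
\begin{equation*}
(A_0, A_1, A_\infty, \ell_0, \ell_1, \ell_\infty)\longmapsto (A_0-1,\, A_1-1,\, 2-A_0-A_1,\, \ell_0, \ell_1, \ell_\infty).
\end{equation*}

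First I would verify that this lands in $(\tdN)^{S,\sum=0}$: each $A_s-1$ is nilpotent and annihilates $\ell_s$, so defines a point of $\tdN$; and the three matrices sum to zero by construction. The assignment is manifestly $\SL(2)$-equivariant, so it descends to a map of quotient stacks over $\pt/\SL(2)$. For a set-theoretic inverse one may use any of the three alternative formulas displayed in the excerpt (sending $B_s$ back to $B_s+1$ componentwise, recovering the two of the three factors where this recipe is consistent with the constraint). The three different choices agree off of, and differ on, the non-reduced diagonal component $\tdN_\Delta$, exactly matching the infinitesimal automorphisms flagged there; any one of them will do.

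The delicate step is promoting this point-set bijection to an isomorphism of schemes, given that $(\tdN)^{S,\prod=1}$ has a non-reduced component. My approach is to compare the defining ideals directly using the explicit local equations enumerated in Section \ref{sss:Loc comp}. Both moduli are cut out of the smooth ambient $(\tdN)^S$ by the same number of scalar equations (four matrix entries of the constraint, of which two are automatic from $\det=1$ on the Betti side and $\tr=0$ on the linear side). Using the local coordinates $(a,b,x,y)$ introduced there, one reads off that the defining equations of $(\tdN)^{S,\sum=0}$ are literally $ax+by=0$ and $ax^2+by^2=0$, identical to the pair appearing for $(\tdN)^{S,\prod=1}$ after the trivially-satisfied determinant relation is discarded. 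Hence the polynomial assignment pulls back ideal to ideal, and is an isomorphism of schemes (both being classical lci of the same codimension, derivedness is not an issue).

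Stringing the two $\SL(2)$-equivariant isomorphisms together and passing to quotient stacks yields the asserted chain over $\pt/\SL(2)$. The main obstacle I anticipate is the bookkeeping around the non-reduced component: one must be careful that the chosen polynomial formula actually induces an isomorphism of scheme structures and not merely of reduced schemes. The local computation in Section \ref{sss:Loc comp} is what rules this out, since it pins down the ideals on the nose.
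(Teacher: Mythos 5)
Your proposal is correct and follows the paper's own route: the corollary is obtained exactly by concatenating the explicit polynomial isomorphism $(\tdN)^{S,\prod=1}\isom(\tdN)^{S,\sum=0}$ (justified, as you say, by the coincidence of the local defining ideals $ax+by=ax^2+by^2=0$ on both sides, which also handles the non-reduced component) with the residue isomorphism $\dRLoc_{\SL(2)}(\PP^1,S)\isom\Lin_{\SL(2)}(\PP^1,S)$ of the preceding lemma. The only imprecision is the phrase ``each $A_s-1$ is nilpotent and annihilates $\ell_s$'': the third entry of the assignment is $2-A_0-A_1$, not $A_\infty-1$, and checking that it is nilpotent with $(2-A_0-A_1)\ell_\infty=0$ genuinely uses the defining equations of the Betti moduli rather than being automatic — but this is exactly the remark the paper makes and your appeal to the local computation covers it.
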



\subsection{Coherent sheaves}

Given a stack $Z$, recall we write $\Coh(Z)$ to denote the dg derived category of coherent complexes on $Z$.
We  abuse terminology and use the term coherent sheaves
to refer to its objects.


\subsubsection{Affine Hecke action} Let $\dG$ be any reductive group over $\QQ$. Fix $s\in S$, then the monoidal category $\Coh^{\dG}(\St_{\dG})$ acts  on $\Coh(\Loc_{\dG}(\PP^1, S))$ as follows. Recall the curve $\XX_{s}=\PP^{1}_{-}\coprod_{\PP^{1}\setminus\{s\}}\PP^{1}_{+}$ in Section~\ref{ss:Hk mod} with the point $s$ doubled. The moduli stack $\Loc_{\dG}(\XX_{x}, S_{\pm})$ can be similarly defined as $\Loc_{\dG}(\PP^{1}, S)$, with $\dB$-reductions at both $s_{-}$ and $s_{+}$. The Steinberg stack $\St_{\dG}/\dG$ can be identified with the moduli stack $\Loc_{\dG}(\DD_{s}, \{s_{-}, s_{+}\})$ of $\dG$-local systems on the doubled disk $\DD_{s}$ with unipotent monodromy and $\dB$-reductions at $s_{-}$ and $s_{+}$. We have a diagram
\begin{equation}\label{eq: aff hecke diag}
\xymatrix{
& \ar[dl]_-{p_-} \Loc_{\dG}(\XX_s, S_{\pm}) \ar[dr]^-{p_+} \ar[d]^-\kappa& \\
\Loc_{\dG}(\PP^{1}, S)  & \Loc_G(\DD_s, \{s_-, s_+\})=\St_{\dG}/\dG &   \Loc_{\dG}(\PP^{1}, S)  
}
\end{equation}

Passing to quasi-coherent sheaves, one obtains the affine Hecke action 
\begin{equation*}
\xymatrix{
\star_{s}: \Coh^{\dG}(\St_{\dG})
 \otimes  \QCoh(\Loc_{\dG}(\PP^{1}, S))
 \ar[r] & \QCoh(\Loc_{\dG}(\PP^{1}, S))}
\end{equation*}
\begin{equation*}
\xymatrix{
\calK\star_{s}\calF = (p_{+})_!((p_{-})^*\calF \otimes \kappa^*(\calK)).}
\end{equation*}
which preserves the subcategory $\Coh(\Loc_{\dG}(\PP^{1}, S))$ because $p_{+}$ is proper. 

Natural generalizations of the above constructions provide $\Coh(\Loc_{\dG}(\PP^{1}, S))$ the requisite coherences of a $ \Coh^{\dG}(\St_{\dG})$-module
structure.


\subsubsection{$\dG= \SL(2)$}
The center $Z(\SL(2)) \simeq \mu_2$ acts trivially on $(\tdN)^{S,\prod=1}$, therefore it acts on the underlying coherent sheaf of  each object in $\Coh(\Loc_{\SL(2)}(\PP^1, S))$. This provides a direct sum decomposition
$$
\xymatrix{
\Coh(\Loc_{\SL(2)}(\PP^1, S)) \simeq \Coh^{\triv} (\Loc_{\SL(2)}(\PP^1, S)) \oplus \Coh^\alt(\Loc_{\SL(2)}(\PP^1, S))
}
$$
determined by whether the action of $\mu_2 \simeq Z(\SL(2))$ is trivial or by the alternating representation.

For any $s\in S$, the corresponding Atkin-Lehner involution $\cO^{cl}_{\St_{\dG}}(-1,0)\in\Coh^{\dG}(\St_{\dG})$ exchanges the two summands.


\subsubsection{$\dG= \PGL(2)$}\label{sss:spec PGL2}

The decomposition 
into connected components ~\eqref{Loc 2 cpts}
provides a direct sum decomposition
$$
\xymatrix{
\Coh(\Loc_{\PGL(2)}(\PP^1, S)) \simeq \Coh(\Loc^{\ov 0}_{\PGL(2)}(\PP^1, S))  \oplus \Coh(\Loc^{\ov 1}_{\PGL(2)}(\PP^1, S))
}
$$

By Lemma ~\ref{l:Loc odd}, the second
summand admits an equivalence
$$
\xymatrix{
\Coh(\Loc^{\ov 1}_{\PGL(2)}(\PP^1, S))\simeq \Vect.
}
$$


\subsubsection{Relation between $\dG= \SL(2)$ and $\dG=\PGL(2)$}\label{sss:coh shv rel}

Pullback along the map $p$ in~\eqref{eq:Loc p} provides an equivalences
\begin{equation*}
\xymatrix{
p^*:\Coh(\Loc^{\ov 0}_{\PGL(2)}(\PP^1, S))\ar[r]^-\sim &  \Coh^{\triv}(\Loc_{\SL(2)}(\PP^1, S))
}
\end{equation*}
as $\Perf(\Loc_{\PGL(2)}(\PP^{1}, S))$-module categories.

Thus using the  prior decompositions and Atkin-Lehner involutions, we see that to understand any of the above categories, it suffices to understand for example  $\Coh^{\triv}(\Loc_{\SL(2)}(\PP^1, S))$.


\section{Langlands duality} 

In this section we give the proof of our main theorem. For most of this section we focus on $G=\PGL(2)$ and $\dG= \SL(2)$. We will establish results  in this case
first, and then use them to deduce the case of  $G=\SL(2)$ and $\dG= \PGL(2)$.


\subsection{Dictionary: matching objects}

Before proceeding to the construction and proof of the equivalence, let us record here various
distinguished objects
that will be matched by it.

Let $U^{\ov{0}}, U^{\ov{1}} \subset \Bun_{\PGL(2)}(\PP^1, S)$ denote the open substacks classifying parabolic bundles with respectively  underlying bundle 
$\calE \simeq \calO_{\PP^1} \oplus \calO_{\PP^1}$, $\calE \simeq \calO_{\PP^1}(1) \oplus \calO_{\PP^1}$.
In what follows, all of the sheaves will be understood to be extensions by zero off of $U^{\ov{0}}, U^{\ov{1}}$.

\subsubsection{$U^{\ov 1}$} Within $U^{\ov{1}}$, 
consider the open substacks
\begin{equation*}
\xymatrix{
 c_1(*)    \ar@{^(->}[r]^-j & 
  c_1(*) \cup c_1(\vn)   \ar@{^(->}[r]^-i & 
U^{\ov{1}}
}\end{equation*}
classifying respectively bundles $\calE \simeq \calO_{\PP^1}(1) \oplus \calO_{\PP^1}$ with generic lines $\ell_0, \ell_1, \ell_\infty$,
and more generally, lines $\ell_0, \ell_1, \ell_\infty$ with none
contained within $\calO_{\PP^1}(1)$.

We have the following distinguished objects:

\begin{equation*}
\xymatrix{
\Wh_{S} = i_! j_* \const{c_1(*)} \ar@{<->}[r] & \calO_{\Loc_{\SL(2)}(\PP^1, S)} & \mbox{(by construction)} 
}
\end{equation*}
\begin{equation*}
\xymatrix{
\Eis_{-1} = i_! \const{c_1(\vn)}[-1] \ar@{<->}[r] & \calO_{\tdN_\Delta} & \mbox{(by Prop.~\ref{p:Eis})} 
}
\end{equation*}
\begin{equation*}
\xymatrix{
\Eis_{1} = \const{c_1(S)}[-3] \ar@{<->}[r] & \calO_{\tdN_\Delta}(2) & \mbox{(by Prop.~\ref{p:Eis})}
}
\end{equation*}

\subsubsection{$U^{\ov 0}$}\label{sss:U0}  For a point $c_{0}(R)\subset U^{\ov 0}$, recall from ~\eqref{eq:IC k R} the notation $\IC_{0}(R)$ for the IC-sheaf of its closure. On $U^{\ov{0}}$, we have the following distinguished objects:

\begin{equation*}
\xymatrix{
\IC_0(\vn) \ar@{<->}[r] & \calO_{(\PP^1)^3}(-1, -1, -1) & \mbox{(by Prop.~\ref{prop: 2 to 3 pt compatibility})}
}
\end{equation*}
\begin{equation}\label{IC01}
\xymatrix{
\IC_0(0, 1) \ar@{<->}[r] & \calO_{\Lambda_{0, 1}}(0, 0, -1) & \mbox{(by Prop.~\ref{prop: 2 to 3 pt compatibility})}
}
\end{equation}
\begin{equation*}
\xymatrix{
\IC_0(0, \infty) \ar@{<->}[r] & \calO_{\Lambda_{0, \infty}}(0, -1, 0) & \mbox{(by Prop.~\ref{prop: 2 to 3 pt compatibility})}
}
\end{equation*}
\begin{equation}\label{IC1inf}
\xymatrix{
\IC_0(1, \infty) \ar@{<->}[r] & \calO_{\Lambda_{1, \infty}}(-1, 0, 0) & \mbox{(by Prop.~\ref{prop: 2 to 3 pt compatibility})}
}
\end{equation}
\begin{equation*}
\xymatrix{
\Eis_{0} = \const{c_0(S)}[-2] \ar@{<->}[r] & \calO_{\tdN_\Delta}(1)& \mbox{(by Prop.~\ref{p:Eis})}
}
\end{equation*}

We will also use the object $J_1\star_1 \Wh_{S}$. Consider the open substacks
\begin{equation*}
\xymatrix{
 c_0(\vn)    \ar@{^(->}[r]^-j & 
  c_0(\vn) \cup c_0(0, \infty)   \ar@{^(->}[r]^-i & 
U^{\ov{0}}
}\end{equation*}
classifying  bundles $\calE \simeq \calO^{2}_{\PP^1}$ with respectively generic lines $\ell_0, \ell_1, \ell_\infty$,
and more generally, lines $\ell_0, \ell_1, \ell_\infty$ with the only possible coincidence
$\ell_0 = \ell_\infty$. Then we have
\begin{equation*}
\xymatrix{
J_1\star_1 \Wh_{S}  \simeq i_* j_! \const{c_0(\vn)} \ar@{<->}[r] & \calO_{\Loc_{\SL(2)}(\PP^1, S)}(0, 1,0) & \mbox{(by calculating $J_1\star_1$)}
}
\end{equation*}

\subsection{Construction of functor}


To construct the functor in ~\eqref{eq: 1}, we first construct an action of the monoidal category $\Perf(\Loc_{\dG}(\PP^{1},S))$ (under $\otimes$) on the automorphic category $\Sh(\Bun_{G}(X,S))$. Of course this action should be given by ``integrating'' the local Hecke actions, but the key property that makes the integration possible is the following local constancy result.


\begin{lemma}\label{l:hk loc const}
For $G= \PGL(2)$ or $\SL(2)$, the $ \calH_G^\sph$-module structure on  $\Sh_!(\Bun_G(\PP^1, S))$ 
is  locally constant in $x\in \PP^1 \setminus S$.
\end{lemma}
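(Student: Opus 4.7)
The plan is to exhibit, for each kernel $\cK \in \cH_G^{\sph}$ and each source sheaf $\cF \in \Sh_!(\Bun_G(\PP^1, S))$, the family Hecke output
$$
\Hecke^{\sph}_{Y}(\cK, \cF) \in \Sh(\Bun_G(\PP^1, S) \times Y), \qquad Y = \PP^1 \setminus S,
$$
as a sheaf of the form $\cG \boxtimes \cL$ with $\cG \in \Sh_!(\Bun_G(\PP^1, S))$ and $\cL$ a local system on $Y$. First I would set up the Beilinson--Drinfeld version of the spherical Hecke correspondence over $Y$: the total space $\Bun_G(\XX_Y, S)$ of relative modifications over $Y$, with projections $p_{\pm}$ to $\Bun_G(\PP^1, S)$ and a map $\kappa$ to the family spherical Hecke stack over $Y$. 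The functor is then given by $\Hecke^{\sph}_Y(\cK,\cF) = p_{+!}(p_-^*\cF \otimes \kappa^* \cK_Y)$, where $\cK_Y$ denotes the canonical spreading of $\cK$ along $Y$ provided by the factorization structure of the Beilinson--Drinfeld affine Grassmannian.

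The next step is the key geometric input: because the affine Grassmannian is defined only in terms of a formal disk, the family $\Gr_{G,Y} \to Y$ and likewise the relative Hecke correspondence $\Bun_G(\XX_Y, S) \to Y$ are Zariski (in fact étale) locally trivial along $Y$; under any such trivialization the kernel $\kappa^*\cK_Y$ becomes the external product of $\kappa^*\cK$ at a single point with the constant sheaf on $Y_{\loc}$. Combined with the fact that $p_-^*\cF$ is pulled back from $\Bun_G(\PP^1,S)$ and hence also constant in $Y$, this shows that locally on $Y$ the integrand is an external product with a constant sheaf on $Y_{\loc}$, so its proper pushforward to $\Bun_G(\PP^1, S) \times Y$ is locally constant along $Y$.

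Third, to avoid the somewhat delicate manipulation of the Beilinson--Drinfeld factorization, one can alternatively invoke the general fact from \cite{NYhecke} that the spherical Hecke action on the full subcategory of sheaves with nilpotent singular support is locally constant. In our situation this hypothesis is vacuous: as listed in Section~\ref{automorphic}, the stack $\Bun_G(\PP^1, S)$ has only discretely many isomorphism classes of objects (with connected automorphism groups, up to the central $\mu_2$ in the $\SL(2)$ case), so every object of $\Sh_!(\Bun_G(\PP^1, S))$ has singular support contained in the zero section and hence a fortiori nilpotent singular support. Applying \cite{NYhecke} directly then gives the claim.

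The main obstacle is a bookkeeping one: verifying that, in either approach, the local constancy is natural enough to upgrade from an object-wise statement to a statement about the $\cH_G^{\sph}$-module structure, i.e.\ that it respects the convolution coherences. For the factorization approach this is built into the construction of the Beilinson--Drinfeld correspondence; for the singular support approach this is part of the structural result of \cite{NYhecke}. Either way, the conclusion is that the $\cH_G^{\sph}$-module structure factors through $\int_Y \cH_G^\sph \simeq \Perf(\Loc_{\dG}(Y))$, which is the substantive content of the lemma used in the subsequent construction of the spectral action.
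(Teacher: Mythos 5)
Your second approach is not a proof in the context of this paper: the reference \cite{NYhecke} is listed as in preparation, and the paper explicitly declines to rely on it, instead giving an elementary self-contained argument (your observation that every object of $\Sh_!(\Bun_G(\PP^1,S))$ automatically has nilpotent singular support is correct, but it only appears in the paper as a remark \emph{after} the lemma explaining how the lemma fits into the general picture). So the burden falls on your first approach, and there is a genuine gap in its key step. Local triviality of the Beilinson--Drinfeld family $\Gr_{G,Y}\to Y$ (and of $\Bun_G(\XX_Y,S)\to Y$ as a fibration over $\Bun_G(\PP^1,S)$ via $p_-$) does \emph{not} imply that the full correspondence $\Bun_G(\PP^1,S)\xleftarrow{p_-}\Bun_G(\XX_Y,S)\xrightarrow{(p_+,\pi_Y)}\Bun_G(\PP^1,S)\times Y$ is locally a product along $Y$: a trivialization adapted to $p_-$ and to the kernel cannot simultaneously be adapted to $p_+$, because the isomorphism class of the modified bundle genuinely depends on the point of modification. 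If your argument were correct as stated it would prove local constancy of the spherical Hecke action on arbitrary constructible sheaves over an arbitrary curve, which is false (e.g.\ for $G=\GL(1)$ on a positive-genus curve, the modification $\cL\mapsto\cL(x)$ applied to a skyscraper sheaf on $\Pic$ moves its support with $x$); this is precisely why the nilpotent singular support hypothesis, or some substitute, is indispensable.

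What the paper actually does is reduce, using the discreteness of the isomorphism classes $b,b'$ of points of $\Bun_{\PGL(2)}(\PP^1,S)$, to showing that for each pair $(b,b')$ the family of Hecke fibers $H(b,b')\to\PP^1\setminus S$, with $H_x(b,b')=(p_{x-},p_{x+})^{-1}(b,b')$, is a fibration, so that the stalk $\cohoc{*}{H_x(b,b'),\QQ}$ is locally constant in $x$. This is then verified by an explicit case analysis over the poset of strata listed in Section~\ref{sss:Bun PGL2 pts}; notably, in the case $(b,b')=(c_0(\vn),c_1(*))$ the family is an $\AA^1$-fibration given by the complement of the graph of an open embedding $\PP^1\setminus S\incl\PP^1$, not a product with a fixed fiber --- so even the weaker conclusion you are aiming for requires a computation that your proposal does not supply. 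To repair your argument you would need to carry out this fiberwise verification (or an equivalent infinitesimal/singular-support argument proved from scratch rather than cited).
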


\begin{proof}
We will give an elementary proof using our prior parameterization of objects, though it is possible to give an infinitesimal argument of more general applicability.

Observe that it suffices to prove the assertion for $G=\PGL(2)$, and a Hecke operator given by a lower modification (which is a monoidal generator for $\cH^{\sph}_{\PGL(2)}$). 


For $x\in \PS$, the Hecke correspondence of a lower modification at $x$ is given by the diagram
\begin{equation}\label{px+-}
\xymatrix{\Bun_{\PGL(2)}(\PP^{1}, S) & \Bun_{\PGL(2)}(\PP^{1}, S\cup\{x\})\ar[l]_-{p_{x-}} \ar[r]^-{p_{x+}} &\Bun_{\PGL(2)}(\PP^{1}, S)}
\end{equation}
Here $p_{x-}$ sends a point $(\cE, \ell_{0}, \ell_{1}, \ell_{\infty}, \ell_{x})$ to $(\cE, \ell_{0}, \ell_{1}, \ell_{\infty})$, and $p_{x+}$ sends it to $(\cE', \ell'_{0}, \ell'_{1},\ell'_{\infty})$, where $\cE'$ fits into a short exact sequence $\calE'\to \calE \to \ell_x$, and $\ell'_{s}=\ell_{s}$ for $s\in S$ after identifying $\cE'_{s}$ with $\cE_{s}$.

We need to show that for any $\cF\in \Sh_{!}(\Bun_{\PGL(2)}(\PP^{1}, S))$, the stalk of $p_{x+!}p^{*}_{x-}\cF$ at any point $b'\in\Bun_{\PGL(2)}(\PP^{1}, S)$ is locally constant as $x$ varies in $\PP^{1} \setminus S$. It suffices to check this for $\cF=\cF_{k}(R)$, one of the basis objects. Let $b=c_{k}(R)$ and let $H_{x}(b,b')=(p_{x-}, p_{x+})^{-1}(b,b')\subset \Bun_{\PGL(2)}(\PP^{1}, S\cup\{x\})$. As $x$ varies, the $H_{x}(b,b')$ form a family $h_{b,b'}: H(b,b')\to \PS$. Since the stalk of $p_{x+!}p^{*}_{x-}\cF_{k}(R)$ at $b'$ is simply $\cohoc{*}{H_{x}(b,b'),\QQ}$, it suffices to show that $h$ is a fibration.

Therefore we fix $b,b'\in \Bun_{\PGL(2)}(\PP^{1}, S)$, viewed as classifying spaces of their respective automorphism groups. Consider the two projections restricted from ~\eqref{px+-}:
\begin{equation*}
\xymatrix{b & H_{x}(b,b')\ar[l]-_{h_{x-}}\ar[r]^-{h_{x+}} & b'}
\end{equation*}
The fibers of $h_{x-}$ and $h_{x+}$ are subsets of $\PP^{1}$, hence $\dim\Aut(b)$ and $\dim\Aut(b')$ differ at most by $1$. We have two cases:
\begin{enumerate}
\item If $\dim\Aut(b)$ and $\dim\Aut(b')$ differ by $1$, one of the arrows $h_{x-}$ or $h_{x+}$ has to be an isomorphism. Therefore, in this case, $H(b,b')\simeq b\times (\PS)$ or $H(b,b')\simeq b'\times (\PS)$, hence $h_{b,b'}$ is a trivial fibration.

\item If $\dim\Aut(b)=\dim\Aut(b')$. Inspecting the list of points in $\Bun_{\PGL(2)}(\PP^{1}, S)$ given in Section~\ref{sss:Bun PGL2 pts}, we see this happens only for the following pairs $(b,b')$.
\begin{itemize}
\item $(b,b')=(c_{0}(S-\{s\}), c_{1}(s))$ or $(c_{1}(s), c_{0}(S-\{s\}))$ for $s\in S$. In this case both $h_{x-}$ and $h_{x+}$ are isomorphisms, therefore $h_{b,b'}$ is a trivial fibration.
\item $(b,b')=(c_{1}(\vn), c_{2}(\vn))$ or $(c_{2}(\vn), c_{1}(\vn))$. In this case both $h_{x-}$ and $h_{x+}$ are isomorphisms, therefore $h_{b,b'}$ is a trivial fibration.
\item $(b,b')=(c_{0}(\vn), c_{1}(*))$ or $(c_{1}(*), c_{0}(\vn))$. In this case, $\Aut(b)=\Aut(b')=1$. If $b=(\cE=\cO^{2}_{\PP^{1}}, \ell_{0},\ell_{1}, \ell_{\infty})$, then 
\begin{equation*}
H(b,b')=\{(x,\ell_{x})\in (\PS)\times \PP^{1}\mid \mbox{there is no map $\cO_{\PP^{1}}(-1)\to \cE$ containing $\ell_{0}, \ell_{1}, \ell_{\infty}$ and $\ell_{x}$}\}. 
\end{equation*}
One can check that $H(b,b')\subset (\PS)\times\PP^{1}$ is the complement of the graph of an open embedding $\PS\incl \PP^{1}$. Therefore $h_{b,b'}$ is an $\AA^{1}$-fibration.
\end{itemize}
\end{enumerate}
This completes the proof of the lemma.

%

%
%
%
%
%
%

\end{proof}

\begin{remark}
The above lemma  extends to the statement~\cite{NYhecke}  that spherical Hecke modifications at a point of a curve acting on sheaves 
on $\Bun_G(X, S)$ with nilpotent singular support are locally constant in the point of the curve.
All codirections are nilpotent in special cases such as considered in the lemma.
\end{remark}

Note the natural inclusion and projection
\begin{equation*}
\xymatrix{
\Loc_{\dG}(\PP^{1},S)\ar[r]^{\sim} & (\tilde \calN^\vee)^{S, \prod = 1} /\dG\ar[r] & (\tilde \calN^\vee)^S/\dG\ar[r] & (\tilde \calN^\vee/\dG)^S.
}
\end{equation*}

Passing to perfect complexes,  we obtain a composite pullback functor 
\begin{equation*}
\xymatrix{
\Perf(\tilde \calN^\vee/\dG)^{\otimes S}  \ar[r]^-\sim & \Perf((\tilde \calN^\vee/\dG)^{S})   \ar[r] & \Perf ( (\tilde \calN^\vee)^S/\dG)
\ar[r] & \Perf (\Loc_{\dG}(\PP^{1}, S)).
}
\end{equation*}

Recall from~\eqref{eq:PerfN S action} that we have an action of $\Perf(\tilde \calN^\vee/\dG)^{\otimes S}$ on $\Sh_!(\Bun_G(\PP^1, S))$ coming from Wakimoto operators at each $s\in S$.

\begin{cor}
For $G= \PGL(2)$ or $\SL(2)$,  the natural $\Perf(\tilde \calN^\vee/\dG)^{\otimes S}$-action
on 
$\Sh_!(\Bun_G(\PP^1, S))$  in ~\eqref{eq:PerfN S action}  factors through the pullback
\begin{equation*}
\xymatrix{
\Perf(\tilde \calN^\vee/\dG)^{\otimes S}   
   \ar[r] & \Perf (\Loc_{\dG}(\PP^{1}, S)).
   }
\end{equation*}
\end{cor}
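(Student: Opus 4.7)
My plan is to factor the pullback functor in two stages as
\[
\Perf(\tdN/\dG)^{\otimes S}\simeq \Perf((\tdN/\dG)^{S})\to \Perf((\tdN)^{S}/\dG)\to \Perf(\Loc_{\dG}(\PP^{1},S)),
\]
where the first map reduces equivariance along the diagonal $\dG\incl\dG^{S}$ and the second is pullback along the derived closed embedding cut out by $\prod A_{s}=1$. I then verify that the Wakimoto action extends across each stage in turn. The central inputs will be Gaitsgory's central functor compatibility \eqref{eq:Z comp} (both the nearby-cycles factorization and its monodromy refinement), local constancy of the spherical Hecke action (Lemma~\ref{l:hk loc const}), and the relation $\gamma_{0}\gamma_{1}\gamma_{\infty}=1$ in $\pi_{1}(\PP^{1}\setminus S, u_{0})$.

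For the first stage, I would promote each Wakimoto action at $s$ to the full affine Hecke action of $\Coh^{\dG}(\St_{\dG})$ via Proposition~\ref{p:Hk equiv}, then extract the central $\Rep(\dG)$-subaction along $\Rep(\dG)\xr{Z}\Coh^{\dG}(\St_{\dG})$. By \eqref{eq:Z comp}, this central subaction at $s$ equals the nearby cycles at $s$ of the spherical Hecke action at a nearby unramified point; by Lemma~\ref{l:hk loc const}, the three central $\Rep(\dG)$-actions at $s=0,1,\infty$ are then canonically identified via parallel transport in $\PP^{1}\setminus S$. These identifications form precisely the descent datum needed to promote the $\Rep(\dG)^{\otimes S}$-module structure underlying the combined Wakimoto action to a diagonal $\Rep(\dG)$-module structure, which is exactly what is required to extend across the first arrow. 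For the second stage, the monodromy clause of \eqref{eq:Z comp} shows that the tautological universal unipotent $A_{s}\in\dG$ acting through the central functor at $s$ matches the monodromy of $\Psi_{s}$. Under the identifications from the first stage, the composite $A_{0}A_{1}A_{\infty}$ realizes the action of the loop $\gamma_{0}\gamma_{1}\gamma_{\infty}$ on the locally constant spherical family; since this loop is null-homotopic, the composite is the identity, which is the chain-level relation defining $\Loc_{\dG}(\PP^{1},S)\incl (\tdN)^{S}/\dG$, yielding the factorization across the second arrow.

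The main technical obstacle I anticipate is packaging these identifications coherently at the level of symmetric monoidal $\infty$-categorical actions: the Wakimoto $\Rep(\dG)$-subcategory of $\Coh^{\dG}(\St_{\dG})$ (pushforward of $\pi^{*}V$ from the diagonal $\tdN\incl\St_{\dG}$) and the central $\Rep(\dG)$-subcategory (the bundles $\calO_{\St_{\dG}}\otimes V$) are genuinely different as monoidal subcategories even though they are linked by a canonical natural transformation carrying matching monodromy data, and the relation $\prod=1$ must be realized as a derived rather than set-theoretic intersection. A clean framework, alluded to in the introduction, is to first assemble the locally constant spherical Hecke action into a single action of $\Perf(\Loc_{\dG}(\PP^{1}\setminus S))$ via factorization homology, and then glue in the Wakimoto data at the three punctures along the natural embeddings $\Rep(\dG)\to \Perf(\tdN/\dG)$. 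With that formalism in place, the proof of the corollary reduces to the two local geometric inputs just cited.
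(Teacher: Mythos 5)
Your plan is correct and follows essentially the same route as the paper: first identify the three central $\Rep(\dG)$-subactions via local constancy of the spherical Hecke action and the compatibility \eqref{eq:Z comp} to descend from $\Perf(\tdN/\dG)^{\otimes S}$ to $\Perf((\tdN)^{S}/\dG)$, then use the monodromy statement together with $\gamma_{0}\gamma_{1}\gamma_{\infty}=1$ to trivialize the resulting $\Perf(\dG/\dG)$-action and descend to $\Perf(\Loc_{\dG}(\PP^{1},S))$. The coherence issue you flag is handled in the paper by expressing each descent step as a relative tensor product of module categories and invoking \cite[Theorem 4.7]{BFN} to identify it with perfect complexes on the corresponding (derived) fiber product.
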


\begin{proof}
Set $U = \PP^1 \setminus S$, and fix a basepoint $u_0 \in U$.
Fix a collection of paths $\theta_s:[0,1] \to \PP^1(\CC)$, $s\in S$,  with $\theta_s(0) = u_0$, $\theta_s((0, 1)) \subset U$ pairwise disjoint,
and $\theta_s(1) =  s\in S$.
Each $\theta_s$ provides a counterclockwise loop $\gamma_s\in \pi_1(U, u_0)$ obtained by following 
$\theta_{s}$ towards $s$, then taking a small counterclockwise loop around $s$, and
finally following the inverse of $\theta_{s}$ back to $u_0$.
We can choose the  collection of paths $\gamma_s$, $s\in S$ so that the relation $\gamma_{0}\gamma_{1}\gamma_{\infty}=1$ holds in $\pi_{1}(U,u_{0})$.

For each $s\in S$, using the loop $\gamma_{s}$, and the local constancy of the spherical Hecke action proved in Lemma~\ref{l:hk loc const}, we may identify the $\cH^{\sph}_{G}$-action on $\Sh_{!}(\Bun_{G}(\PP^{1}, S))$ given by $\Hecke^{\sph}_{u_{0}}$ with the action given by the nearby cycles $\Psi_{s}(\Hecke^{\sph}_{D_{s}})$, which in turn, by ~\eqref{eq:Z comp},  is identified with the action of $\cH^{\sph}_{G}$ via Gaitsgory's central functor and the affine Hecke action. In other words, the action of $\Rep(\dG)$ on $\Sh_{!}(\Bun_{G}(\PP^{1}, S))$ by $\Hecke^{\sph}_{u_{0}}$ is identified with the composition of the monoidal functor $\Rep(\dG)\to \Perf(\tdN/\dG)\xr{\Phi^{\aff}\circ\D_{*}} \cH^{\aff}_{G}$ and the  $\star_{s}$-action of the latter.  


The above discussion shows that the $\Perf(\pt/\dG)=\Rep(\dG)$-actions coming from all three factors of $\Perf(\tdN/\dG)$ are canonically identified, therefore the  $\Perf(\tdN/\dG)^{\otimes S}$-action factors through 
\begin{equation*}
\xymatrix{\Perf(\tdN/\dG)^{\otimes S}\otimes_{\Perf(\pt/\dG)^{\otimes S}}\Perf(\pt/\dG)}.
\end{equation*}
By \cite[Theorem 4.7]{BFN}\footnote{In \cite[Theorem 4.7]{BFN},  the equivalence $\QCoh(Y_{1})\otimes_{\QCoh(Y)}\QCoh(Y_{2})\isom\QCoh(X_{1}\times_{Y} X_{2})$ is proved for perfect stacks $X_{1}, X_{2}$ and $Y$. By taking compact objects on both sides, we get the version for perfect complexes  $\Perf(Y_{1})\otimes_{\Perf(Y)}\Perf(Y_{2})\isom\Perf(X_{1}\times_{Y} X_{2})$.}, the above tensor product is equivalent to
\begin{equation*}
\xymatrix{\Perf((\tdN/\dG)^{S}\times_{(\pt/\dG)^{S}}\pt/\dG)\ar[r]^-{\sim} & \Perf((\tdN/\dG)^{S}/\dG).}
\end{equation*}
Thus we have shown that the action of $\Perf(\tdN/\dG)^{\otimes S}$ factors through $\Perf((\tdN/\dG)^{S}/\dG)$.

Finally, we consider the pullback along the map ~\eqref{mult S}
\begin{equation*}
\xymatrix{\Perf(\dG/\dG)\ar[r] & \Perf((\tdN)^{S}/\dG)}
\end{equation*}
The $\Perf((\tdN)^{S}/\dG)$-action on $\Sh_{!}(\Bun_{G}(\PP^{1}, S))$ then restricts to an action of $\Perf(\dG/\dG)$, whose further restriction to $\Rep(\dG)$ is given by $\Hecke^{\sph}_{u_{0}}$. Such an action of $\Perf(\dG/\dG)$ is equivalent to the following data: for each $V\in \Rep(\dG)$, the endo-functor $\Hecke^{\sph}_{u_{0}}(V,-)$ of $\Sh_{!}(\Bun_{G}(\PP^{1}, S))$ is equipped with an automorphism $m_{V}$ satisfying $m_{V\otimes V'}=m_{V}\circ  m_{V'}$ (and further coherences). By construction, $m_{V}$ is given by the product of the monodromies of $\Hecke^{\sph}_{u_{0}}(V,\cF)$ along $\gamma_{0},\gamma_{1}$ and $\gamma_{\infty}$.  The equation $\gamma_{0}\gamma_{1}\gamma_{\infty}=1$ implies that $m_{V}$ is isomorphic to the identity automorphism for all $V$ (in a way compatible with tensor structures), hence the $\Perf(\dG/\dG)$-action factors through 
\begin{equation*}
\xymatrix{\Perf(\dG/\dG)\ar[r]^{\ev_{1}} & \Perf(\pt/\dG)\ar[r]^{\sim} & \Rep(\dG)}
\end{equation*}
and the $\Perf((\tdN)^{S}/\dG)$-action further factors through
\begin{equation*}
\Perf((\tdN)^{S}/\dG)\otimes_{\Perf(\dG/\dG)}\Perf(\pt/\dG).
\end{equation*}
Again by ~\cite[Theorem 4.7]{BFN}, the above tensor product is equivalent to
\begin{equation*}
\xymatrix{\Perf((\tdN)^{S}/\dG\times_{\dG/\dG}\pt/\dG)=\Perf((\tdN)^{S,\prod=1}/\dG)=\Perf(\Loc_{\dG}(\PP^{1}, S)).}
\end{equation*}
This proves the desired factorization of the $\Perf(\tdN/\dG)^{\otimes S}$-action on $\Sh_{!}(\Bun_{G}(\PP^{1}, S))$.
\end{proof}

%


\subsubsection{The functor $\Phi$}
From now on we let $G=\PGL(2)$ and $\dG=\SL(2)$.

By making $\Perf ( \Loc_{\SL(2)}(\PP^1, S))$ act on the Whittaker sheaf $\Wh_{S} \in  \Sh_!(\Bun_{\PGL(2)}(\PP^1, S))$,  we obtain a functor
\begin{equation*}
\xymatrix{
 \Phi_\Perf:\Perf ( \Loc_{\SL(2)}(\PP^1, S)) 
   \ar[r] &
    \Sh_!(\Bun_{\PGL(2)}(\PP^1, S)).}
\end{equation*}
Since $\Sh(\Bun_{\PGL(2)}(\PP^1, S))$ is cocomplete, we can take the continuous extension of $\Phi_{\Perf}$ to get a functor
\begin{equation*}
\xymatrix{
 \Phi:\QCoh ( \Loc_{\SL(2)}(\PP^1, S)) 
   \ar[r] &
   \Sh(\Bun_{\PGL(2)}(\PP^1, S)).}
\end{equation*}

Consider as well the restriction  
\begin{equation*}
\xymatrix{
\Phi_\Coh =  \Phi |_{\Coh ( \Loc_{\SL(2)}(\PP^1, S)). }
}
  \end{equation*}

%

We will defer the proof of the following until Proposition~\ref{p:Coh!} below but mention it here for clarity.

\begin{prop}\label{prop: extension images}


The  functor $\Phi_\Coh$
 lands in the full dg subcategory
$  \Sh_!(\Bun_{\PGL(2)}(\PP^1, S)).
$
\end{prop}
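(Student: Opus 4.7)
The plan is to reduce the general coherent case to the perfect case via a thick-generation argument. First, I would observe that $\Sh_!(\Bun_{\PGL(2)}(\PP^1, S))$ is a thick subcategory of $\Sh(\Bun_{\PGL(2)}(\PP^1, S))$: it is closed under finite colimits, shifts, extensions, and retracts, since the union of finitely many finite-type substacks is again finite-type, and constructibility persists under these finite operations. By construction, the action of $\Perf(\Loc_{\SL(2)}(\PP^1, S))$ on $\Wh_S$ preserves $\Sh_!$, giving $\Phi(\Perf)\subset \Sh_!$. It thus suffices to verify that every object of $\Coh(\Loc_{\SL(2)}(\PP^1, S))$ lies in the thick subcategory of $\QCoh(\Loc_{\SL(2)}(\PP^1, S))$ generated by $\Perf$.

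Next, I would exploit the explicit description from Section~\ref{sss:Loc comp} of $\Loc_{\SL(2)}(\PP^1, S) \simeq (\tdN)^{S,\prod=1}/\SL(2)$ as a quasi-smooth derived stack of finite type with five irreducible components: four smooth classical ones (one copy of $(\PP^1)^3$ and three copies of $\PP^1 \times \tdN$) and the non-reduced $\tdN_\Delta$ whose reduced structure is a smooth vector bundle over $\PP^1$. By Noetherian induction on support, any coherent sheaf sits in a finite iterated extension of pushforwards of coherent sheaves from individual irreducible components, and further filtering the non-reduced component by powers of its nilradical reduces to the smooth reduced locus. On each smooth piece, coherent sheaves are perfect. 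It then remains to verify that pushforward along each closed embedding $\iota: Z \hookrightarrow \Loc_{\SL(2)}(\PP^1, S)$ of a smooth component preserves the thick closure of $\Perf$. The quasi-smoothness of $\Loc_{\SL(2)}(\PP^1, S)$, which arises because it is the derived fiber over $1 \in \SL(2)$ of the smooth multiplication map $(\tdN)^S \to \SL(2)$, makes each such $\iota$ a regular embedding and produces finite Koszul resolutions of $\iota_*\calO_Z$ by perfect complexes on $\Loc_{\SL(2)}(\PP^1, S)$.

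The main obstacle is this last step: on a general singular stack, bounded coherent sheaves need not lie in the thick closure of perfect complexes within $\QCoh$ (they are compact only in $\Ind\Coh$). The specific geometry here — an lci derived stack with explicitly describable smooth and nearly-smooth components — is precisely what rescues the reduction, since quasi-smoothness bounds the Tor-amplitude of structure sheaves of closed substacks. Combined with the continuity of $\Phi$ (preserving the relevant finite colimits) and the thickness of $\Sh_!$ in $\Sh$, this yields $\Phi(\Coh) \subset \Sh_!$ as required.
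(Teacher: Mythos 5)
Your reduction fails at its central step. The thick subcategory of $\QCoh(\Loc_{\SL(2)}(\PP^1, S))$ generated by $\Perf(\Loc_{\SL(2)}(\PP^1, S))$ is just $\Perf$ itself (perfect complexes are already closed under shifts, cones, and retracts), so your claim amounts to asserting $\Coh = \Perf$ on this stack. That is false: the paper points out explicitly (in the proof of Proposition~\ref{p:Eis}) that $\calO_\Delta(1)$ is coherent but not perfect. The appeal to quasi-smoothness does not rescue this. Quasi-smoothness of the ambient derived stack does not make the inclusion of an irreducible component a regular embedding in the sense needed for a finite Koszul resolution: the local equation cutting out one component is a zero divisor on the union of components. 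Concretely, in the local model $k[a,b,x,y]/(ax+by,\, ax^2+by^2)$ of Section~\ref{sss:Loc comp}, the structure sheaf of a component such as $\Lambda_\varnothing$ (locally $a=b=0$) admits only an infinite, periodic-type free resolution, exactly as $R/(x)$ does over $R=k[x,y]/(xy)$. So your dévissage produces objects ($\iota_*$ of perfect complexes on components) that are genuinely outside $\Perf$ of the ambient, and the argument cannot close.

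The paper's route is different and is forced by precisely this obstruction. It first shows (Proposition~\ref{prop: 2 to 3 pt compatibility}) that the ``old forms'' $C_s = \Im(\eta_s^\ell)$ are carried into $\Sh_s = \Im(\pi_s^*)\subset \Sh_!$, and then (Lemma~\ref{l:gennew}) that the quotient $C^{\new}$ is generated by the coherent-but-not-perfect Eisenstein objects $\calO_\Delta(n)$, whose images are identified with the Eisenstein sheaves $\Eis_{n-1}\in\Sh_!$ by Proposition~\ref{p:Eis}. That last identification is the hard point: because $\calO_\Delta(1)$ has only infinite resolutions by perfect complexes, the paper must work in quotient categories and pin down $\Phi(\calO_\Delta(1))$ by a support argument plus a pushforward computation, rather than by any finite resolution. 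If you want to salvage your strategy, you would need to replace ``thick closure of $\Perf$'' by ``stable subcategory generated by $C^{\old}$ and the $\calO_\Delta(n)$'' and then supply the content of Propositions~\ref{prop: 2 to 3 pt compatibility} and~\ref{p:Eis}, which is where the real work lies.
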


\subsection{Compatibilities with changing level structure}

In this section, we will see why it is importance we act upon the Whittaker sheaf to construct the functor
$ \Phi$
and its elaborations. 




\subsubsection{Changing level structure on the automorphic side}

On the automorphic side, for $s\in S$, consider the natural $\PP^1$-fibration
 \begin{equation*}
\xymatrix{ 
\pi_{s}: \Bun_{\PGL(2)}(\PP^1, S) \ar[r] & \Bun_{\PGL(2)}(\PP^1, S\setminus \{s\})
}
\end{equation*}
where we forget the flag at $s\in S$.
It provides an adjoint triple
\begin{equation*}
\xymatrix{
\Sh_!(\Bun_{\PGL(2)}(\PP^1, S))
\ar[rr]^-{\pi_{s*}=\pi_{s!}} &&
 \Sh_{!}(\Bun_{\PGL(2)}(\PP^1, S\setminus \{s\})) \ar@<-2.5ex>[ll]_-{\pi^{*}_{s}}\ar@<2.5ex>[ll]_-{\pi^{!}_{s}}}
\end{equation*}

\subsubsection{Changing level structure on the spectral side} 

We seek the corresponding adjoint triple on the spectral side.

First, introduce the intermediate stack
\begin{equation*}
\xymatrix{ 
 \Loc_{\SL(2)}(\PP^1, S\setminus \{s\}, \{s\})  =  \Loc_{\SL(2)}(\PP^1, S\setminus \{s\})  \times_{\{s\}/\SL(2)} \PP^1/\SL(2)
}
\end{equation*}
classifying 
an $\SL(2)$-local system on $\PP^1 \setminus (S\setminus \{s\})$ with $B^\vee$-reductions near $S\setminus \{s\}$ with trivial induced $T^\vee$-monodromy, and an additional $B^\vee$-reduction at $s\in \PP^1$.

Next, consider the natural correspondence
\begin{equation*}
\xymatrix{ 
& \Loc_{\SL(2)}(\PP^1, S\setminus \{s\},  \{s\}) \ar[dl]_-{q_{s}} \ar[d]_-{\kappa_{s}} \ar[rd]^-{p_{s}}& \\
\Loc_{\SL(2)}(\PP^1, S) & \PP^1/\SL(2) &  \Loc_{\SL(2)}(\PP^1, S\setminus \{s\}) 
}
\end{equation*}
where $p_{s}$ is the evident $\PP^1$-fibration forgetting the flag at $s\in \PP^1$, $\kappa_{s}$ forgets all of the data except the flag at $s\in \PP^1$,
and $q_{s}$ is the evident inclusion fitting into the Cartesian square
\begin{equation*}
\xymatrix{ 
\ar[d] \Loc_{\SL(2)}(\PP^1, S) & \Loc_{\SL(2)}(\PP^1, S\setminus \{s\}, \{s\}) \ar[l]_-{q_{s}}  \ar[d] \\
 (T^*\PP^1) /\SL(2)& \ar[l] \PP^1/\SL(2) \\
}
\end{equation*}
Note in particular that the pullback $q_{s}^*$ preserves coherent complexes since up to base change it is given by tensoring with the perfect complex $\calO_{\PP^1} = \mathit{Cone}( \calO_{ T^*\PP^1 }(2) \to  \calO_{T^*\PP^1})$.

Passing to coherent complexes, define the adjoint triple 
\begin{equation*}
\xymatrix{
 \Coh(\Loc_{\SL(2)}(\PP^1, S))
\ar[rr]^-{\eta_{s}} &&
 \Coh(\Loc_{\SL(2)}(\PP^1, S\setminus \{s\})) \ar@<-2.5ex>[ll]_-{\eta^\ell_{s}}\ar@<2.5ex>[ll]_-{\eta^r_{s}}
 }
\end{equation*}
\begin{equation*}
\xymatrix{
 \eta_{s}(\calF) = p_{s*}(q_{s}^*\calF\otimes \kappa_{s}^*\calO_{\PP^1}(-1)) 
 }
\end{equation*}

\begin{equation*}
\xymatrix{
 \eta^\ell_{s} = q_{s*}(p_{s}^*\calF\otimes \kappa_{s}^*\calO_{\PP^1}(-1)[-1])  &   \eta^r_{s} = q_{s*}(p_{s}^*\calF\otimes \kappa_{s}^*\calO_{\PP^1}(-1)[1]) 
 }
\end{equation*}

%
%
%
%
%
%
%

Finally, recall from Section \ref{ss:2 pts} that there is an equivalence (denoted by $\Phi_{0,\infty}$ there)\footnote{Note that the definition of $\Phi_{0,\infty}$ in Section \ref{ss:2 pts} is asymmetric with respect to $0$ and $\infty$: its definition uses the Hecke action at $0$. Therefore, in defining $\Phi_{S\setminus\{s\}}$, we need to make a choice of one of the two points in $S\setminus\{s\}$. The results involving $\Phi_{S\setminus\{s\}}$ will be valid for any such choice. In fact, one can show that for different choices of points in $S\setminus\{s\}$, the resulting functors are canonically isomorphic to each other, but we do not need this statement in the sequel.}
\begin{equation*}
\xymatrix{
\Phi_{S\setminus \{s\}}:\Coh( \Loc_{\SL(2)}(\PP^1, S\setminus \{s\})) 
   \ar[r]^-\sim &
    \Sh_!(\Bun_{\PGL(2)}(\PP^1, S\setminus \{s\})).}
\end{equation*}

\begin{prop}\label{prop: 2 to 3 pt compatibility}
For each pair of vertical arrows $(\eta^{\ell}_{s}, \pi^{*}_{s}), (\eta_{s}, \pi_{s!})$ and $(\eta^{r}_{s}, \pi^{!}_{s})$, the following diagram commutes by a canonical isomorphism
\begin{equation*}
\xymatrix{
    \Coh( \Loc_{\SL(2)}(\PP^1,S))  \ar[d]_{\eta_{s}}\ar[r]^-{\Phi} & 
      \Sh_!(\Bun_{\PGL(2)}(\PP^1, S))  \ar[d]_{\pi_{s!}}
\\
  \ar@<2.5ex>[u]^-{\eta^\ell_{s}} \ar@<-2.5ex>[u]^{\eta^{r}_{s}} \Coh( \Loc_{\SL(2)}(\PP^1, S\setminus \{s\}))  \ar[r]^-{\Phi_{S\setminus \{s\}}} & 
      \Sh_!(\Bun_{\PGL(2)}(\PP^1, S\setminus \{s\}))  \ar@<2.5ex>[u]^-{\pi^*_{s}} \ar@<-2.5ex>[u]^{\pi^{!}_{s}}
}
\end{equation*}
\end{prop}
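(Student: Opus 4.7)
The plan is to prove one of the three commutativities and deduce the remaining two by adjunction, using that $\pi_s$ is smooth proper of relative dimension one together with the parallel Serre duality relations for the spectral $\PP^1$-fibration $p_s$ which precisely account for the twists and shifts $\kappa^*_s\calO_{\PP^1}(-1)[\mp 1]$ appearing in the definitions of $\eta^\ell_s$ and $\eta^r_s$. I focus on the left adjoint pair $(\eta^\ell_s, \pi^*_s)$.

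The first step is a general reduction. Both functors $\Phi$ and $\Phi_{S\setminus\{s\}}$ are determined, by linearity over their Wakimoto-generated spectral actions and continuity, by their values on the structure sheaf $\calO$. Moreover, $\pi^*_s$ is naturally a module functor between these structures: explicitly, for $X\in \Perf(\Loc_{\SL(2)}(\PP^1,S\setminus\{s\}))$ and $\calF\in \Sh_!(\Bun_{\PGL(2)}(\PP^1,S\setminus\{s\}))$, one has $\pi^*_s(X\cdot \calF) \simeq \eta^\ell_s(X)\cdot \pi^*_s\calF$, where $\cdot$ denotes the spectral action. This intertwining is straightforward at the two unchanged ramification points, while at $s$ it is the geometric shadow of Gaitsgory's central functor~\eqref{eq:Z comp}, with the twist $\kappa^*_s\calO_{\PP^1}(-1)[-1]$ encoding the contribution of the $\PP^1$-fiber. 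Granting this, the desired commutativity reduces to the single isomorphism
\[
\pi^*_s\,\Wh_{S\setminus\{s\}}\;\simeq\;\Phi\bigl(\eta^\ell_s\,\calO_{\Loc_{\SL(2)}(\PP^1,S\setminus\{s\})}\bigr),
\]
where I use $\Phi_{S\setminus\{s\}}(\calO)\simeq \Wh_{S\setminus\{s\}}$ from Lemma~\ref{l:2 pts cal}.

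The remaining task is to verify this single identity. The right-hand side equals $\eta^\ell_s(\calO)\cdot\Wh_S$ in the spectral action, where $\eta^\ell_s(\calO)=q_{s*}(\kappa^*_s\calO_{\PP^1}(-1)[-1])$ is a coherent complex supported on the closed substack of $\Loc_{\SL(2)}(\PP^1,S)$ where the monodromy at $s$ is trivial. Under Bezrukavnikov's identification of the spectral action at $s$ with the affine Hecke action, this object should correspond to the image under Gaitsgory's central functor of (a shift of) the standard representation of $\SL(2)$; the required isomorphism then follows from~\eqref{eq:Z comp}, since $\pi^*_s\Wh_{S\setminus\{s\}}$ is likewise realized as the nearby-cycles limit of spherical Hecke modifications of $\Wh_S$ along a path approaching $s$.

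The main obstacle is the identification of $\eta^\ell_s(\calO)$ with the Bezrukavnikov image of Gaitsgory's central sheaf $Z(V_{\mathrm{std}})$ up to a normalizing shift and twist. This is a local computation on the Steinberg derived scheme $\St_{\SL(2)}$, which I plan to verify by unpacking the description of $\Loc_{\SL(2)}(\PP^1,S)$ as a derived fiber product (cf.~Section~\ref{sss:Loc comp}) and matching the Koszul-type resolution of $q_{s*}\kappa^*_s\calO_{\PP^1}(-1)[-1]$ with the standard description of $Z(V_{\mathrm{std}})$ as the pullback of the tautological bundle along $\tdN\to \dcN$.
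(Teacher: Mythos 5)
Your overall architecture --- reduce to the structure sheaf using equivariance, verify a single isomorphism of objects, and recover the remaining squares by adjunction --- has the right shape, but the step where you actually verify $\pi_s^*\Wh_{S\setminus\{s\}}\simeq\Phi(\eta_s^\ell\calO)$ rests on a false identification. The kernel implementing $\eta_s^\ell(\calO)$ at the point $s$ is $\mathit{Cone}(\Delta_*\calO_{\tdN}(1)\to\Delta_*\calO_{\tdN}(-1))[-1]$, the cone of the tautological \emph{degree-zero} map between Wakimoto kernels, supported on the zero section inside the diagonal copy of $\tdN$ in $\St_{\dG}$. Gaitsgory's central sheaf $Z(V_{\mathrm{std}})$ corresponds under $\Phi^{\aff}$ to the pullback of $V_{\mathrm{std}}$ along $\St_{\dG}/\dG\to\pt/\dG$; its Wakimoto constituents $J_{1}, J_{-1}$ enter as an iterated \emph{extension} (a degree-one class), not as a cone on a degree-zero map. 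Already in K-theory the two objects differ ($[J_1]-[J_{-1}]$ versus $[J_1]+[J_{-1}]$), and no shift or twist reconciles the signs, so $\eqref{eq:Z comp}$ does not apply. Moreover, even granting a central description, your closing assertion that $\pi_s^*\Wh_{S\setminus\{s\}}$ ``is realized as the nearby-cycles limit of spherical modifications of $\Wh_S$'' is an unproven claim of essentially the same depth as the proposition itself. The paper's actual computation is elementary but different: writing $J_1=T_{0*}T_{1/2}$ and $J_{-1}=T_{1/2}T_{0!}$, and using the triangles $\Avg\to T_{0*}\to\delta$ and $\delta\to T_{0!}\to\Avg$ together with the asphericity $\Avg\star_s\Wh_S\simeq 0$ of Corollary~\ref{c:Wh asp}, one finds $\Phi(\eta_s^\ell\calO)\simeq\Avg\star_s(T_{1/2}\star_s\Wh_S)=\pi_s^*\pi_{s*}(T_{1/2}\star_s\Wh_S)[1]$, then identifies $T_{1/2}\star_s\Wh_S\simeq i_!j_*\const{c_0(\vn)}$ explicitly and checks that its pushforward under $\pi_{s}$ is $\Wh_{S\setminus\{s\}}$.

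Two smaller problems. First, your intertwining formula $\pi_s^*(X\cdot\calF)\simeq\eta_s^\ell(X)\cdot\pi_s^*\calF$ fails already for $X=\calO$ (the left side is $\pi_s^*\calF$ while the right side is the action of the nontrivial kernel $\eta_s^\ell(\calO)$); the correct reduction uses only that both composites are equivariant for the Wakimoto actions at the two points of $S\setminus\{s\}$, under which $\eta_s^\ell(\calO(a,b))\simeq\calO(0,a,b)\otimes\eta_s^\ell(\calO)$ by the projection formula. Second, the other two squares do not follow from $(\eta_s^\ell,\pi_s^*)$ by adjunction alone, because at this stage $\Phi$ is not known to be an equivalence: adjunction produces only a natural transformation, whose invertibility must still be checked. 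The paper proves $(\eta_s^r,\pi_s^!)$ by repeating the direct argument, and for $(\eta_s,\pi_{s!})$ verifies the adjunction map after applying the conservative functor $\pi_s^*$, using $\eta_s^\ell\eta_s=\calO_{\PP^1\times\PP^1}(-1,-1)[-1]\star_s(-)$ and $\Phi^{\aff}(\Avg)=\calO_{\PP^1\times\PP^1}(-1,-1)$. Your Serre-duality heuristic for the shifts is fine, but it does not substitute for these checks.
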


\begin{proof} 
(1) We first prove the commutativity for the pair $(\eta^{\ell}_{s}, \pi^{*}_{s})$.

By construction, both compositions 
\begin{equation*}
\xymatrix{
\Phi\circ \eta^\ell_{s}, \pi^* _{s}\circ \Phi_{S\setminus\{s\}}:\Coh( \Loc_{\SL(2)}(\PP^1,S\setminus \{s\})) \ar[r] &
  \Sh_!(\Bun_{\PGL(2)}(\PP^1, S))
}
\end{equation*}
are naturally equivariant for the tensor action of  $\Perf(\tdN/\dG)$ at the points $S\setminus \{s\}$. Therefore  it suffices to give a natural isomorphism when evaluated on the structure sheaf
\begin{equation}\label{pi eta on cO}
\xymatrix{
\pi^*_{s}( \Phi_{S\setminus\{s\}}(\calO_{\Loc})) \ar[r]^-\sim & \Phi( \eta^\ell_{s}(\calO_{\Loc^{s}})) 
}
\end{equation}
where we use short-handed notation $\cO_{\Loc}$ (resp. $\cO_{\Loc^{s}}$) to mean the structure sheaf of $\Loc_{\SL(2)}(\PP^{1},S)$ (resp. $\Loc_{\SL(2)}(\PP^{1},S\setminus\{s\})$).\footnote{There is a general argument which works for any $G$, but here we give a more down-to-earth argument for $G=\PGL(2)$.}

On the one hand, note the isomorphism 
\begin{equation*}
\xymatrix{
 \eta^\ell_{s}(\calO_{\Loc^{s}})   \simeq \mathit{Cone}(\calO_{\Loc}(1,0,0)\ar[r] & \calO_{\Loc}(-1,0,0))[-1]
}
\end{equation*}
where we order the twists with $s$ as the first component. We can recast this as an isomorphism
\begin{equation*}
\xymatrix{
 \eta^\ell_{s}(\cO_{\Loc^{s}})   \simeq \mathit{Cone}(\cO_{\tdN}(1)\star_s \cO_{\Loc}\ar[r] & \cO_{\tdN}(-1)\star_s \cO_{\Loc})[-1].
}
\end{equation*}
Moreover, the morphism of the cone is induced by the natural morphism of Wakimoto kernels
\begin{equation*}
\xymatrix{
\calO_{\tdN}(1)  \ar[r] & \calO_{\tdN}(-1)
}
\end{equation*}
Thus we have an isomorphism
\begin{equation*}
\xymatrix{
 \Phi(\eta^\ell_{s}(\cO_{\Loc^{s}}))   
 \simeq \mathit{Cone}((J_{1}\ar[r] & J_{-1})\star_s \Wh_{S})[-1].
 }
\end{equation*}
Expanding in terms of the standard basis, we have 
the reformulation
\begin{equation*}
\xymatrix{
\Phi( \eta^\ell_{s}(\cO_{\Loc^{s}}))  \simeq \mathit{Cone}((T_{0*} T_{1/2}\ar[r] & T_{1/2} T_{0!})\star_s \Wh_{S})[-1].
}
\end{equation*}
Thanks to the distinguished triangles in $\cH^{\aff}_{\PGL(2)}$ given by
\begin{equation*}
\xymatrix{
\d \ar[r] & T_{0!} \ar[r] & \Avg &
\Avg \ar[r] & T_{0*} \ar[r] & \d
}
\end{equation*}
and the fact that $\Avg\star_s\Wh_{S}=0$ as seen in Corollary~\ref{c:Wh asp}, we have the further reformulation
\begin{equation*}
\xymatrix{
\Phi( \eta^\ell_{s}(\cO_{\Loc^{s}})) 
\simeq \Avg  \star_s T_{1/2} \star_s \Wh_{S} = \pi^{*}_{s}\pi_{s*}(T_{1/2} \star_s \Wh_{S})[1].
}
\end{equation*}

Finally, we have an isomorphism 
\begin{equation*}
\xymatrix{
T_{1/2}\star_{s} \Wh_{S} \simeq i_! j_* \const{ c_0(\vn)}
}\end{equation*}
in terms of the open substacks
\begin{equation*}
\xymatrix{
 c_0(\vn)    \ar@{^(->}[r]^-j & 
  c_0(\vn) \cup c_0(0, \infty)   \ar@{^(->}[r]^-i & 
\Bun_{\PGL(2)}(\PP^1, S)
}\end{equation*}
classifying trivial bundles $\calO^{2}_{\PP^1}$ with distinct lines $\ell_0, \ell_1, \ell_\infty$,
and with $\ell_1$ alone distinct. 
From this, we observe an isomorphism 
\begin{equation*}
\xymatrix{
\pi_{s*}(T_{1/2}\star_{s} \Wh_{S})[1] \simeq \Wh_{S\setminus\{s\}}
}\end{equation*}
from which \eqref{pi eta on cO} follows.

(2) The proof for the pair $(\eta^{r}_{s}, \pi^{!}_{s})$ is completely the same as that for $(\eta^{\ell}_{s}, \pi^{*}_{s})$.

(3) By adjunction and the known canonical isomorphism $\pi^{*}_{s}\circ\Phi_{S\setminus\{s\}}\simeq \Phi\circ\eta^{\ell}_{s}$, we get a natural transformation $\Phi_{S\setminus\{s\}}\Rightarrow \pi_{s*} \circ \Phi\circ\eta^{\ell}_{s}$. Precomposing with $\eta_{s}$, we get a natural transformation
\begin{equation*}
\th: \Phi_{S\setminus\{s\}}\circ\eta\Rightarrow \pi_{s*}\circ \Phi\circ\eta^{\ell}_{s}\circ\eta\Rightarrow  \pi_{s*}\circ \Phi.
\end{equation*}
We will show that $\th$ is an equivalence. Note it suffices to show that $\pi^{*}_{s}\th$ is an equivalence because $\pi^{*}_{s}$ is conservative. 

Observe that
\begin{equation*}
\eta^{\ell}_{s}\eta(\cF)=\cO_{\PP^{1}\times\PP^{1}}(-1,-1)[-1]\star_{s}\cF, \quad \textup{ for }\cF\in \Coh(\Loc_{\SL(2)}(\PP^{1}, S)).
\end{equation*}
By the affine Hecke equivariance of $\Phi$ (see Prop \ref{p:Hk equiv}), and the fact that $\cO_{\PP^{1}\times\PP^{1}}(-1,-1)$ corresponds to $\Avg$ under the equivalence $\Phi^{\aff}$, we have
\begin{equation*}
\Phi\circ\eta^{\ell}_{s}\circ\eta =\Phi\circ(\cO_{\PP^{1}\times\PP^{1}}(-1,-1)[-1]\star_{s}) \simeq \Avg[-1]\star_{s}\Phi.
\end{equation*}
On the other hand, the above equivalence of functors is the composition
\begin{equation*}
\xymatrix{\Phi\circ \eta^{\ell}_{s}\circ \eta \ar[rr]^-{\sim} && \pi^{*}_{s}\circ \Phi_{S\setminus\{s\}}\circ\eta \ar[rr]^-{\pi^{*}_{s}\th} &&  \pi^{*}_{s}\circ\pi_{s*}\circ \Phi=\Avg[-1]\star_{s}\Phi}
\end{equation*} 
where the first equivalence is the composition of the identity of $\eta$ and the equivalence
established above in (1). Therefore $\pi^{*}_{s}\th$ is an isomorphism. This completes the proof.  
\end{proof}

This proposition allows us to calculate the image of $\IC_0(\vn), \IC_0(1,\infty), \IC_0(0,\infty)$ and $\IC_0(0,1)$ under $\Phi$, as listed in Section~\ref{sss:U0}. For example, $\IC_{0}(0,\infty)\cong \pi^{*}_{1}\Eis_{0,\{0,\infty\}}$  (the Eisenstein series $\Eis_{0}$ for $\PP^{1}\setminus\{0,\infty\}$).

\subsection{Compatibility with Eisenstein series}



In the case $G=\PGL(2)$, we have $\L_{T}=\ZZ$. For $n\in \ZZ$, recall the subdiagram
\begin{equation*}
\xymatrix{
\Bun^n_T(\PP^1) & \ar[l]_-{p_n} \Bun^n_B(\PP^1)  \ar[r]^-{q_n} & \Bun^{\bar n}_{\PGL(2)}(\PP^1, S)
}\end{equation*}
where we fix $n= 2\deg(\calL) - \deg(\calE)$ (and $\bar n = n \mod 2$).  

Recall the Eisenstein series sheaf
\begin{equation*}
\xymatrix{
\Eis_n = q_{n!} \const{ \Bun^n_B(\PP^1)}[-n-2]
}\end{equation*}

To describe it, recall we write  $j:c_n(S) \to \Bun^{\bar n}_G(\PP^1, S)$ for the point where $\calE \simeq \calO_{\PP^1}(n) \oplus \calO_{\PP^1}$ with lines $\ell_0, \ell_1, \ell_\infty\subset \calO_{\PP^1}(n)$, and write $\calF_n(S) = j_! \const{c_n(S)}  \in \Sh_!( \Bun^{\bar n}_G(\PP^1, S))$ for the extension by zero of the constant sheaf.
Recall also the special point $j:c_1(\vn) \to \Bun^{\bar 1}_G(\PP^1, S)$ where 
$\calE \simeq \calO_{\PP^1}(1) \oplus \calO_{\PP^1}$ with collinear lines $\ell_0, \ell_1, \ell_\infty\subset \calO_{\PP^1}$, and 
$\calF_1(\vn) = j_! \const{c_1(\vn)}  \in \Sh_!( \Bun^{\bar 1}_G(\PP^1, S))$ is the extension by zero of the constant sheaf.

\begin{lemma}\label{lem: easy eis}
\begin{enumerate}
\item When $n\geq 0$, we have an isomorphism
\begin{equation*}
\xymatrix{
p_n: \Bun^n_B(\PP^1)\ar[r]^-\sim & c_{n}(S) \subset \Bun^{\bar n}_{\PGL(2)}(\PP^1, S)
}\end{equation*}
and hence an isomorphism
\begin{equation*}
\xymatrix{
\Eis_n \simeq \calF_n(S)[-n-2].
}\end{equation*}
\item When $n= -1$, we have an isomorphism
\begin{equation*}
\xymatrix{
p_1: \Bun^{-1}_B(\PP^1)\ar[r]^-\sim & c_1(\vn) \subset \Bun^{\bar 1}_{\PGL(2)}(\PP^1, S)
}\end{equation*}
and hence an isomorphism
\begin{equation*}
\xymatrix{
\Eis_{-1} \simeq \calF_1(\vn)[-1]
}\end{equation*}

\end{enumerate}
\end{lemma}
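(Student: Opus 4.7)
The plan is to prove that the induction map $q_n \colon \Bun^n_B(\PP^1) \to \Bun^{\bar n}_{\PGL(2)}(\PP^1, S)$ is an isomorphism onto the indicated locally closed substack; the formulas for $\Eis_n$ then follow directly from the definition $\Eis_n = q_{n!}\const{\Bun^n_B(\PP^1)}[\dim B\cdot(g-1) - \langle 2\rho, n\rangle]$, together with the computation that for $G = \PGL(2)$, $g = 0$, and $\Lambda_T \simeq \ZZ$, the shift evaluates to $2(-1) - n = -n-2$ (so $-1$ when $n = -1$).

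For part (1), I would parameterize a point of $\Bun^n_B(\PP^1)$ by a pair $(\calL \subset \calE)$ with $2\deg(\calL) - \deg(\calE) = n$, modulo line bundle tensoring. After normalizing $\deg(\calL) = n$ (and hence $\deg(\calE) = n$), the quotient $\calE/\calL$ has degree $0$, so the extension
\[
0 \to \calO_{\PP^1}(n) \to \calE \to \calO_{\PP^1} \to 0
\]
is classified by $\Ext^1(\calO_{\PP^1}, \calO_{\PP^1}(n)) = H^1(\PP^1, \calO_{\PP^1}(n))$, which vanishes for $n \geq -1$. Hence $\calE \simeq \calO_{\PP^1}(n) \oplus \calO_{\PP^1}$ and $\calL$ is identified with the first summand; the induced lines $\ell_s = \calL|_s$ then lie in the $\calO(n)$ summand at every $s \in S$, placing the image of $q_n$ in the configuration $c_n(S)$. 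Conversely, for any point of $c_n(S)$ the $\calO(n)$ summand is intrinsically characterized as the unique maximal degree line subbundle of $\calE$ whose fibers contain all the $\ell_s$, yielding a canonical inverse.

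Part (2) is analogous: normalizing $\deg(\calL) = 0$ and $\deg(\calE) = 1$ forces $\calL \simeq \calO_{\PP^1}$ and $\calE/\calL \simeq \calO_{\PP^1}(1)$, and again the vanishing $\Ext^1(\calO_{\PP^1}(1), \calO_{\PP^1}) = H^1(\PP^1, \calO_{\PP^1}(-1)) = 0$ splits the extension. The crucial difference is that $\calL$ is now the \emph{trivial} summand, so the induced lines $\ell_s$ all lie in $\calO_{\PP^1}$ rather than in $\calO_{\PP^1}(1)$, which is precisely the defining condition of $c_1(\vn)$.

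The main bookkeeping, rather than a genuine obstacle, will be lifting the bijection on isomorphism classes to an isomorphism of stacks by matching automorphism groups. In each case the relevant automorphisms of the pair $(\calL \subset \calE)$ (modulo the diagonal $\Pic$-action) are the upper triangular automorphisms of $\calE$ preserving $\calL$, and a direct comparison with the tabulation of $\Aut(c_n(S))$ and $\Aut(c_1(\vn))$ in Section~\ref{sss:Bun PGL2 pts} identifies them, completing the stack isomorphism. The shifts $[-n-2]$ and $[-1]$ then emerge automatically from the definition of $\Eis_n$.
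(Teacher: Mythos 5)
Your proof is correct and follows essentially the same route as the paper's: the paper's entire argument is the one-line observation that for $\calE\simeq\calO_{\PP^1}(n)\oplus\calO_{\PP^1}$ with $n\geq -1$ there is a unique inclusion $\calO_{\PP^1}(n)\subset\calE$ whose restriction to $S$ gives the prescribed lines, which is exactly your ``conversely'' step. Your added details --- the vanishing of $H^1(\PP^1,\calO_{\PP^1}(n))$ for $n\geq -1$ splitting the extension, the identification of the image configuration, and the matching of automorphism groups --- are all correct and merely make explicit what the paper leaves implicit.
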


\begin{proof}
For  $\calE \simeq \calO_{\PP^1}(n) \oplus \calO_{\PP^1}$ with $n\geq -1$, and lines $\ell_0, \ell_1, \ell_\infty\subset \calO_{\PP^1}(n)$, there exists a unique inclusion $\calO_{\PP^1}(n) \subset \calE$ such that $\calO_{\PP^1}(n)|_S$ coincides with the given lines. (In fact, for $n\geq 1$, there exists a unique inclusion independently of the lines.) 
\end{proof}

\subsubsection{Spectral Eisenstein series}

We seek the objects on the spectral side corresponding to Eisenstein sheaves.

Consider the substack  $\Loc_{B^\vee}(\PP^1, S) \subset \Loc_{\SL(2)}(\PP^1, S)$ classifying $B^\vee$-local systems on $\PP^1 \setminus S$ with trivial induced $T^\vee$-monodromy near $S$ (which in this case implies trivial induced 
$T^\vee$-monodromy globally).
It admits the presentation as a quotient
\begin{equation*}
\xymatrix{
\Loc_{B^\vee}(\PP^1, S) \simeq \tdN_{\Delta}/\SL(2)
}\end{equation*}
of the reduced subscheme $\tdN_{\Delta} \subset (\tdN)^{S, \prod=1}$ of the irreducible component $\L_{S}$ from the list of Section~\ref{sss:Loc comp}. In particular, we have the natural $\SL(2)$-equivariant projection $\pi: \tdN_{\Delta}/\SL(2) \to \PP^1/\SL(2)$.

For $n\in\ZZ$, define the spectral Eisenstein series coherent sheaf to be
\begin{equation*}
\xymatrix{
\calO_\Delta(n) = \calO_{\tdN_{\Delta}/\SL(2)}(n)   \simeq \pi^*\calO_{\PP^1/\SL(2)}(n).
}\end{equation*}

\begin{prop}\label{p:Eis} 
For $n\in\ZZ$, we have an isomorphism
\begin{equation*}
\xymatrix{
\Phi(\calO_\Delta(n+1))\simeq \Eis_n.
}\end{equation*}
\end{prop}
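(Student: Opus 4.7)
The plan is to first reduce the statement to the single case $n=-1$ by a Wakimoto argument, and then to establish that base case by a direct computation.

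\medskip

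\noindent\textit{Step 1 (reduction to $n=-1$).} For any choice of $s\in S$, the line bundle $\cO_{\tdN}(n+1)$ on the $s$th factor of $(\tdN)^{S,\prod=1}$ restricts, on the diagonal component $\tdN_\Delta$, to the line bundle $\cO(n+1)$ appearing in $\cO_\Delta(n+1)$.  Consequently, inside $\Coh(\Loc_{\SL(2)}(\PP^1,S))$ there is a canonical isomorphism $\cO_\Delta(n+1)\simeq J_{n+1}\star_s \cO_\Delta$, where $J_{n+1}$ denotes the Wakimoto operator acting at the point $s$.  Applying $\Phi$ and invoking its affine Hecke equivariance (Proposition~\ref{p:Hk equiv}), combined with the identity $J_{n+1}\star_s \Eis_{-1}\simeq \Eis_n$ from Lemma~\ref{l:J on Eis}, reduces the proposition to establishing the single case $\Phi(\cO_\Delta)\simeq \Eis_{-1}$.

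\medskip

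\noindent\textit{Step 2 (base case $n=-1$).} By Lemma~\ref{lem: easy eis}(2), $\Eis_{-1}\simeq i_!\const{c_1(\vn)}[-1]$ is the shifted extension by zero of the constant sheaf on the single stratum $c_1(\vn)\subset U^{\bar{1}}$.  On the spectral side, $\cO_\Delta$ is the structure sheaf of the Borel-reduction locus $\Loc_{B^\vee}(\PP^1,S)\simeq \tdN_\Delta/\SL(2)$, realized as one of the five irreducible components of $(\tdN)^{S,\prod=1}/\SL(2)$.  My plan is to present $\cO_\Delta$ as a finite iterated cone of complexes pulled back from $\Perf(\tdN/\SL(2))^{\otimes S}$ via a Koszul-type resolution encoding the three pairwise coincidence conditions that cut out $\tdN_\Delta$.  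Applying $\Phi$ term by term then reduces to an explicit iterated Wakimoto convolution against $\Wh_S$, at which point the asphericity of $\Wh_S$ (Corollary~\ref{c:Wh asp}) and the $\pi_{s,P}$-vanishings of Lemma~\ref{l:Avg Wh 0} kill most contributions, and the surviving terms should reassemble to $i_!\const{c_1(\vn)}[-1]=\Eis_{-1}$.

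\medskip

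\noindent\textit{Main obstacle.} The principal difficulty is Step 2. Unlike the entries $\IC_0(R)$ for $|R|=2$ of the automorphic/spectral dictionary, which arise by pullback along $\pi_s$ from the two-point case via Proposition~\ref{prop: 2 to 3 pt compatibility}, the ``fully diagonal'' object $\cO_\Delta$ is not the image of any two-point construction and must be computed by combining the three commuting Wakimoto actions simultaneously.  Finding the cleanest resolution of $\cO_\Delta$ and tracking the cohomological shifts so that $\Phi(\cO_\Delta)$ lands in the correct degree (namely $+1$, as required by the shift $[-1]$ on $\Eis_{-1}$) is the main technical point.
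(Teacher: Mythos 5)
Your Step 1 is correct and is exactly the paper's reduction (the paper reduces to $n=0$ rather than $n=-1$, but by Lemma~\ref{l:J on Eis} either base case works).

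Step 2 contains a genuine gap. You propose to present $\cO_\Delta$ as a \emph{finite} iterated cone of complexes pulled back from $\Perf(\tdN/\SL(2))^{\otimes S}$ via a Koszul-type resolution. No such finite presentation exists: any object built by finitely many cones out of pullbacks of perfect complexes is itself perfect, but $\cO_\Delta$ is coherent and \emph{not} perfect on $\Loc_{\SL(2)}(\PP^1,S)$. Indeed $\tdN_\Delta$ is (the reduced structure of) one irreducible component of the reducible lci scheme $(\tdN)^{S,\prod=1}$, and the structure sheaf of a single component has infinite Tor-dimension along the loci where components meet (the local model is $\cO_{\{x=0\}}$ over $k[x,y]/(xy)$). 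The paper flags precisely this point: a resolution of $\cO_\Delta(1)$ by vector bundles is necessarily infinite, so one cannot simply apply $\Phi$ term by term and invoke asphericity to finish.

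The paper's actual route is designed to circumvent this. It first identifies $\Phi(\cO_\L(0,1,0))\simeq J_1\star_1\Wh_S$ and $\Phi(\cO_{\L_\vn\cup\L_{0,\infty}}(-1,-1,-1))\simeq a_!\const{A}$ (the latter via the two-point compatibility of Proposition~\ref{prop: 2 to 3 pt compatibility}), deduces $\Phi(\cO_Y(0,1,0))\simeq b_!T_B$ from a short exact sequence, and observes that $\cO_Y(0,1,0)$ carries a square-zero endomorphism $\ep$ whose ``kernel/cokernel'' is $\cO_\Delta(1)$. Passing to quotient categories that kill the middle graded pieces $\cO_{\L_{0,1}}(0,0,-1)$ and $\cO_{\L_{1,\infty}}(-1,0,0)$, the image of $\cO_\Delta(1)$ is represented by an \emph{infinite} complex $\cdots\to\cK\xrightarrow{\ov\ep}\cK\to 0$, which is matched with the corresponding infinite complex on the automorphic side using continuity and a computation of endomorphism algebras showing $\Phi(\ep)$ is a nonzero multiple of $\ep'$. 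Running this for each $s\in S$ pins down the support of $\Phi(\cO_\Delta(1))$ to the point $c_0(S)$, and a final pushforward computation $\pi_{1*}\Phi(\cO_\Delta(1))\simeq\Eis_{0,\{0,\infty\}}$ via Lemma~\ref{l:2 pts cal} identifies it with $\Eis_0$. You would need to replace your Step 2 with an argument of this kind (or otherwise genuinely handle the infinite resolution), since the finite Koszul presentation you rely on cannot exist.
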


\begin{proof} We denote $(\tdN)^{S,\prod=1}$ simply by $\L$ during the proof. We will denote objects in $\Coh(\Loc_{\SL(2)}(\PP^{1},S))\simeq \Coh^{\SL(2)}(\L)$  by their pullbacks to $\L$.

By the construction of $\Phi$, we have $\Phi(\calO_\Delta(n+1))=\Phi(\cO_{\L}(n,0,0)\otimes_{\cO_{\L}}\cO_{\D}(1))\simeq J_{n}\star_{0}\Phi(\cO_{\D}(1))$; on the other hand, by Lemma \ref{l:J on Eis}, $J_{n}\star_{0}\Eis_{0}\simeq \Eis_{n}$. Therefore it suffices to show that
\begin{equation*}
\xymatrix{
\Phi(\calO_\Delta(1))\simeq \Eis_0.
}\end{equation*}


 One direct strategy would be to write $\calO_\Delta(1)$ as a complex of vector bundles, then apply $\Phi$ to the complex,
 and show the resulting complex is isomorphic to $\Eis_0$. Unfortunately, since $\calO_\Delta(1)$ is coherent but not perfect, this would involve infinite complexes. To avoid this complication, we will instead bootstrap off of Proposition~\ref{prop: 2 to 3 pt compatibility} and  express $\calO_\Delta(1)$ in terms of the structure sheaf $\calO_{\L}$ and objects coming from two points of ramification.

First, by construction we have 
\begin{equation*}
\xymatrix{
\Phi(\calO_{\L}(0, 1, 0))\simeq \Phi(J_1 \star_1 \calO_{\L})\simeq J_1 \star_1 \Phi( \calO_{\L}) \simeq J_1 \star_1 \Wh_{S}
}\end{equation*}

Let us describe this sheaf explicitly.
Consider the open substacks
\begin{equation*}
\xymatrix{
 c_0(\vn)    \ar@{^(->}[r]^-j & 
  c_0(\vn) \cup c_0(0, \infty)   \ar@{^(->}[r]^-i & 
U^{\ov 0}  \ar@{^(->}[r]^-u &  \Bun^{\ov 0}_{\PGL(2)}(\PP^1, S)
}\end{equation*}
classifying  bundles $\calE \simeq \calO^{2}_{\PP^1} $ with respectively distinct lines $\ell_0, \ell_1, \ell_\infty$,
 more generally, lines $\ell_0, \ell_1, \ell_\infty$ with the only possible coincidence
$\ell_0 = \ell_\infty$, and finally most generally, any configuration of lines $\ell_0, \ell_1, \ell_\infty$. Then a simple calculation, for example via the identity $J_1  = T_{0*} T_{1/2}$,  shows that
\begin{equation*}
\xymatrix{
J_1 \star_1 \Wh_{S} \simeq u_!i_* j_! \const{ c_0(\vn)}. 
}\end{equation*}

From here on, we will only consider the open substack $U^{\ov 0}\subset \Bun_{\PGL(2)}(\PP^1, S)$, and all sheaves  will be understood to be extensions by zero off of $U^{\ov 0}$.

Let $Y$ be the preimge of the partial diagonals $\Delta_{0,1}\cup\Delta_{1,\infty}\subset (\PP^{1})^{S}$ in $(\tdN)^{S,\prod=1}$. Under the local coordinates introduced in Section~\ref{sss:Loc comp}, $Y$ is given locally by the equation $xy=0$. Therefore $Y=\L_{0,1}\cup \L_{1,\infty}\cup \wt\L_{S}$ , where $\wt\L_{S}$ denotes the non-reduced component (5) in Section~\ref{sss:Loc comp} whose reduced structure is $\L_{S}\simeq \tdN_{\Delta}$. Since $\Delta_{0,1}\cup \Delta_{1,\infty}$ have ideal sheaves $\cO_{(\PP^{1})^{S}}(-1,-1,0)\otimes\cO_{(\PP^{1})^{S}}(0,-1,-1)=\cO_{(\PP^{1})^{S}}(-1,-2,-1)$ within $(\PP^{1})^{S}$, the ideal sheaf $\cI_{Y}$ is a quotient of $\cO_{\L}(-1,-2,-1)$. Using local coordinates, we see that the ideal sheaf of $Y$ in $\L$ is generated by one equation $(a+b)$, which defines the components $\L_{\vn}$ and $\L_{0,\infty}$. This gives  in the heart of $\Coh^{\SL(2)}(\L)$
a short exact sequence
\begin{equation*}
\xymatrix{
0\ar[r] & \calO_{\L_{\vn}\cup \L_{0,\infty}}(-1,-2 ,-1)  \ar[r] &  \calO_{\L}  \ar[r] &  
\calO_{Y}\ar[r] & 0
}\end{equation*}
and its twist
\begin{equation}\label{eq: spectral triangle 1}
\xymatrix{
0\ar[r] & \calO_{\L_{\vn}\cup \L_{0,\infty}}(-1, -1, -1)  \ar[r] &  \calO_{\L}(0, 1, 0)  \ar[r] &  
\calO_{Y}(0, 1, 0)\ar[r] & 0
}\end{equation}

By a similar process, using a Koszul-like resolution of $\cO_{\L}$ as a quotient of $\cO_{Y}$ (locally defined by the equations $x=0, y=0$), we get a filtration of $\cO_{Y}(0,1,0)$ by $\SL(2)$-equivariant coherent subsheaves with associated-graded (from sub to quotient)
\begin{equation*}
\calO_{\D}(1), \quad \calO_{\L_{0,1}}(0, 0, -1)\oplus \calO_{\L_{1,\infty}}(-1, 0, 0), \quad \calO_{\D}(1).
\end{equation*}
In particular, $\cO_{Y}(0,1,0)$ carries an endomorphism $\ep: \cO_{Y}(0,1,0)\surj \cO_{\D}(1)\incl \cO_{Y}(0,1,0)$ such that $\ep^{2}=0$.


Next we consider the automorphic side.
Consider the respective open and closed substacks
\begin{equation*}
\xymatrix{
a: A = \{ \ell_0 \not = \ell_\infty\}   \ar@{^(->}[r] & U  &
b: B = \{ \ell_0 = \ell_1\}  \cup\{ \ell_1 = \ell_\infty\}    \ar@{^(->}[r] & U  
}\end{equation*} 
We have a short exact sequence of perverse sheaves
\begin{equation}\label{eq: auto triangle 1}
\xymatrix{
0\ar[r] & a_! \const{A} \ar[r] &  i_* j_! \const{c_{0}(\vn)}  \ar[r] &  b_{!}T_B \ar[r] & 0
}\end{equation}
where $T_B$ is a perverse sheaf on $B$. It is easy to see that $b_{!}T_{B}$ has a filtration (as a perverse sheaf) with associated-graded (from sub to quotient)
\begin{equation*}
\Eis_{0}, \quad \IC_0(0, 1) \oplus \IC_0(1, \infty), \quad \Eis_{0}.
\end{equation*}
In particular, $b_{!}T_{B}$ carries an endomorphism $\ep': b_{!}T_{B}\surj \Eis_{0}\incl b_{!}T_{B}$ such that $\ep'^{2}=0$.


Recall there is an isomorphism
\begin{equation*}
\xymatrix{
\Phi(\calO_{\L}(0, 1, 0)) \simeq  i_* j_! \const{c_{0}(\vn)}.
}\end{equation*}

\begin{claim} 
There is an isomorphism \begin{equation*}
\xymatrix{
\Phi(\calO_{\L_{\vn}\cup \L_{0,\infty}}(-1, -1, -1)) \simeq a_! \const{A}.
}
\end{equation*}
\end{claim}

\begin{proof}
In the case of $\PP^{1}$ with two punctures $0$ and $\infty$, we may identify $\Loc_{\SL(2)}(\PP^{1},\{0,\infty\})$ with the adjoint quotient $\St_{\SL(2)}/\SL(2)$ of the derived Steinberg variety. In the following we write $\Loc_{\SL(2)}(\PP^{1},\{0,\infty\})$ simply as $\Loc(0,\infty)$. Recall from Example~\ref{ex:aff hecke for pgl2}, $\Phi^{\aff}$ sends the twisted classical structure sheaf $\cO_{\St}^{cl}(-1,-1)$ to $T_{0!}$. Therefore, by the definition of $\Phi_{0,\infty}$ we have
\begin{eqnarray*}
\Phi_{0,\infty}(\cO^{cl}_{\Loc(0,\infty)}(-1,-1)) & =&\Phi^{\aff}(\cO^{cl}_{\St}(-1,-1))\star_{0}\Eis_{0,\{0,\infty\}}\\
 & \simeq & T_{0!}\star_{0}\Eis_{0,\{0,\infty\}}\\
 & \simeq & j_{0,\infty !}\const{U_{0,\infty}}[-1].
\end{eqnarray*}
Here $j_{0,\infty}: U_{0,\infty}\simeq \pt/T\incl \Bun_{\PGL(2)}(\PP^{1},\{0,\infty\})$ is the open point $\cE=\cO_{\PP^{1}}^{2}$ with two distinct lines $\ell_{0},\ell_{\infty}$. 

Since $\eta^{\ell}_{1}(\cO^{cl}_{\Loc(0,\infty)}(-1,-1))\simeq \calO_{\L_{\vn} \cup \L_{0,\infty}}(-1,-1 ,-1 )[-1]$, by Proposition~\ref{prop: 2 to 3 pt compatibility}, we have that
\begin{eqnarray*}
\Phi(\calO_{\L_{\vn} \cup \L_{0,\infty}}(-1,-1 ,-1 )) &\simeq& \Phi(\eta_{1}^\ell(\cO^{cl}_{\Loc(0,\infty)}(-1,-1)))[1] \\
&\simeq &\pi^{*}_{1}\Phi_{0,\infty}(\cO^{cl}_{\Loc(0,\infty)}(-1,-1))[1]\\
&\simeq& \pi_{1}^{*}j_{0,\infty !}\const{U_{0,\infty}}\\
&\simeq& a_!\const{A}.
\end{eqnarray*}
\end{proof}

Taking direct sums, we obtain an isomorphism
\begin{equation*}
\xymatrix{
\Phi(\calO_{\L_{\vn}\cup \L_{0,\infty}}(-1, -1, -1) \oplus \calO_{\L}(0, 1, 0) )  \simeq 
 a_! \const{A} \oplus  i_* j_! \const{c_{0}(\vn)}.
}\end{equation*}

\begin{claim}
The functor $\Phi$ induces a quasi-isomorphism
\begin{equation*}
\xymatrix{
\End(\calO_{\L_{\vn}\cup \L_{0,\infty}}(-1, -1, -1) \oplus \calO_{\L}(0, 1, 0) ) 
\ar[r]^-\sim & \End( a_! \const{A} \oplus  i_* j_! \const{c_{0}(\vn)}).
}
\end{equation*}

\end{claim}

\begin{proof}
First, one can  calculate
\begin{equation*}
\xymatrix{
\End( \calO_{\L}(0, 1, 0) )  \simeq \QQ, & \End(  i_* j_! \const{c_{0}(\vn)})  \simeq \QQ.
}
\end{equation*}
Since both are generated by the identity morphism, $\Phi$ must induce a quasi-isomorphism on them.

Next, we have seen in the previous Claim that

\begin{equation*}
\xymatrix{
\calO_{\L_{\vn} \cup \L_{0,\infty}}(-1,-1 ,-1 ) \simeq \eta^\ell_{1}(\cO^{cl}_{\Loc(0,\infty)}(-1,-1)))[1], 
}
\end{equation*}

\begin{equation*}
\xymatrix{
a_!\const{A} \simeq  \pi_{1}^{*}j_{0,\infty !}\const{U_{0,\infty}}. 
}
\end{equation*}

Thus for any object $\cM$, we have a commutative diagram
\begin{equation*}
\xymatrix{
\Hom(\calO_{\L_{\vn} \cup \L_{0,\infty}}(-1,-1 ,-1 ), \cM) \ar[r]^-\Phi \ar[d]_-\sim & 
\Hom( a_!\const{A}, \Phi(\cM)) \ar[d]_-\sim \\
\Hom(\cO^{cl}_{\Loc(0,\infty)}(-1,-1)[1], \eta_1\cM) \ar[r]^-{\Phi_{0, \infty}} & 
\Hom( j_{0,\infty !}\const{U_{0,\infty}},\pi_{1*}  \Phi(\cM)) 
}
\end{equation*}
where the vertical equivalences are by adjunction. Since the bottom arrow is an equivalence, the top arrow must be as well.
In particular, we can apply this for $\cM  \simeq \calO_{\L_{\vn}\cup \L_{0,\infty}}(-1, -1, -1) \oplus \calO_{\L}(0, 1, 0) $.

Finally, a similar argument using the respective right adjoints $\eta_1^r, \pi^!$,  shows  for any object $\cM$, that $\Phi$ induces
an equivalence
\begin{equation*}
\xymatrix{
\Hom(\cM, \calO_{\L_{\vn} \cup \L_{0,\infty}}(-1,-1 ,-1 )) \ar[r]^-\sim  & 
\Hom( \Phi(\cM), a_!\const{A}).  
}
\end{equation*}
Again, we can apply this for $\cM  \simeq \calO_{\L_{\vn}\cup \L_{0,\infty}}(-1, -1, -1) \oplus \calO_{\L}(0, 1, 0) $.

This concludes the proof of the claim.
\end{proof}

\begin{claim} The functor $\Phi$ applied to the sequence \eqref{eq: spectral triangle 1} gives  the sequence  \eqref{eq: auto triangle 1}. In particular, we have an isomorphism 
\begin{equation}\label{OY TB}
\Phi(\cO_{Y}(0,1,0))\simeq b_{!}T_{B}.
\end{equation}
Moreover,
the functor $\Phi$ takes the endomorphism
$\ep$ of $\cO_{Y}(0,1,0)$  to a nonzero multiple of the endomorphism $\ep'$  of $b_{!}T_{B}$.
\end{claim} 
\begin{proof} 
We have seen that
 \begin{equation*}
\xymatrix{
\Phi(\calO_{\L_{\vn}\cup \L_{0,\infty}}(-1, -1, -1)) \simeq a_! \const{A},
 & \Phi(\calO_{\L}(0, 1, 0)) \simeq  i_* j_! \const{c_{0}(\vn)}.
}\end{equation*}

One can  calculate
\begin{equation*}
\xymatrix{
\Hom(\calO_{\L_{\vn}\cup \L_{0,\infty}}(-1, -1, -1),  \calO_{\L}(0, 1, 0)) \simeq \QQ \oplus \QQ[-1], 
}\end{equation*}
\begin{equation*}
\xymatrix{
\Hom(a_!\const{A}, i_* j_! \const{c_{0}(\vn)}) \simeq \QQ \oplus \QQ[-1]. 
}\end{equation*}
Note that each is one-dimensional in degree $0$.

By the previous Claim,  the first morphism of \eqref{eq: spectral triangle 1} is taken to a nonzero morphism. 
Since this morphism and the first morphism of \eqref{eq: auto triangle 1}  are nonzero elements of a one-dimensional vector space, each is a nonzero scale of the other. This implies $\Phi$ takes the sequence  \eqref{eq: spectral triangle 1} to the sequence  \eqref{eq: auto triangle 1}, and in particular, passing to cones gives the isomorphism \eqref{OY TB}. 

Furthermore, the previous Claim also implies the functor $\Phi$ induces a quasi-isomorphism on endomorphisms of the cones
\begin{equation}\label{eq endos}
\xymatrix{
\End(\cO_{Y}(0,1,0) ) 
\ar[r]^-\sim & \End( b_{!}T_{B}).
}
\end{equation}
One can  calculate  the degree $0$ endomorphisms  on both sides of \eqref{eq endos} to see each is isomorphic to the dual numbers with respective  generators $\ep$ and $\ep'$.  Thanks to the quasi-isomorphism~\eqref{eq endos}, this implies
$\Phi$ takes 
$\ep$  to a nonzero multiple of $\ep'$. 

This completes the proof of the claim.
\end{proof}

To complete the proof of the Proposition, 
introduce the quotient categories
\begin{equation*}
\xymatrix{
\ov C=\QCoh(\Loc_{\SL(2)}(\PP^{1},S))/\jiao{\cO_{\L_{0,1}}(0,0,-1), \cO_{\L_{1,\infty}}(-1,0,0)},
}
\end{equation*}
\begin{equation*}
\xymatrix{
\ov \Sh = \Sh(\Bun_{\PGL(2)}(\PP^{1},S))/\jiao{\IC_{0}(0,1)\oplus \IC_{0}(1,\infty)}.
}
\end{equation*}
By \eqref{IC01} and \eqref{IC1inf}, $\Phi$ induces a continuous functor
\begin{equation*}
\xymatrix{
\ov \Phi: \ov C\ar[r] &  \ov \Sh.
}
\end{equation*}
Let $\cK$ be the image of $\cO_{Y}(0,1,0)$ in $\ov C_1$; let $\cT$ be the image of $b_{!}T_{B}$ in $\ov \Sh$. By \eqref{OY TB}, we have
\begin{equation*}
\ov \Phi(\cK)\simeq \cT. 
\end{equation*}

Inside of $\ov C$, the image of $\cO_{\D}(1)$ is represented by the infinite complex (the last nonzero entry is in degree $0$)
\begin{equation}\label{K res}
\cdots\to\cK\xr{\ov\ep}\cK\xr{\ov\ep} \cK\xr{0}0\to\cdots.
\end{equation}
where $\ov\ep$ is the endomorphism of $\cK$ induced by the endomorphism $\ep$ of $\cO_{Y}(0,1,0)$.

Inside $\ov \Sh$, the image of $\Eis_{0}$ is represented by the infinite complex of perverse sheaves (the last nonzero entry is in degree $0$)
\begin{equation}\label{T res}
\cdots\to\cT\xr{\ov\ep'}\cT\xr{\ov\ep'} \cT\xr{0}0\to\cdots.
\end{equation}
where $\ov\ep'$ is the endomorphism of $\cT$ induced by the endomorphism $\ep'$ of $b_{!}T_{B}$.

By the previous claim, the continuous functor $\ov \Phi$ sends \eqref{K res} to \eqref{T res}.  Therefore the image of $\Phi(\cO_{\D}(1))$ in $\ov \Sh$ is the same as the image of $\Eis_{0}$. In particular, 
\begin{equation}\label{OD in IC1}
\Phi(\cO_{\D}(1))\subset \jiao{\IC_{0}(0,1),\IC_{0}(1,\infty), \Eis_{0}}.
\end{equation}
The same argument can be applied when the point $1\in S$ is replaced by $0$ or $\infty$, and we get
\begin{eqnarray}\label{OD in IC0}
\Phi(\cO_{\D}(1))\subset \jiao{\IC_{0}(0,1),\IC_{0}(0,\infty), \Eis_{0}},\\
\label{OD in ICinf}\Phi(\cO_{\D}(1))\subset \jiao{\IC_{0}(0,\infty),\IC_{0}(1,\infty), \Eis_{0}}.
\end{eqnarray}
Since the intersection of the categories on the right sides of \eqref{OD in IC1}, \eqref{OD in IC0} and \eqref{OD in ICinf} consists of sheaves supported at the point $c_{0}(S)$, we conclude that $\Phi(\cO_{\D}(1))$ is supported at $c_{0}(S)$.

Finally, using the compatibility of $\Phi$ with changing levels, we can calculate the push forward of $\Phi(\cO_{\D}(1))$ under $\pi_{1}: \Sh(\Bun_{\PGL(2)}(\PP^{1},S))\to \Sh(\Bun_{\PGL(2)}(\PP^{1},\{0,\infty\}))$. By Proposition \ref{prop: 2 to 3 pt compatibility} we have
\begin{equation}\label{pi Phi}
\pi_{1*}\Phi(\cO_{\D}(1))\simeq \Phi(\eta_{1}(\cO_{\D}(1))).
\end{equation}
Since we will be changing the level structure, we use $\tdN_{\D,\{0,\infty\}}$ and $\cO_{\D,\{0,\infty\}}$ to denote the analogues of $\tdN_{\D}$ and $\cO_{\D}$ when $S$ is replaced by $\{0,\infty\}$. We have a commutative diagram where the left parallelogram is derived Cartesian
\begin{equation*}
\xymatrix{ & \tdN_{\D,\{0,\infty\}}/\SL(2)\ar[r]^-{\th'} \ar[drr]^-{p'_{1}}\ar[dl]_-{q'_{1}} & \Loc_{\SL(2)}(\PP^{1}, \{0,\infty\},\{1\})\ar[dl]_-{q_{1}}\ar[dr]^-{p_{1}}\\
\tdN_{\D}/\SL(2)\ar[r]^-{\th} & \Loc_{\SL(2)}(\PP^{1},S) & & \Loc_{\SL(2)}(\PP^{1}, \{0,\infty\})}
\end{equation*}
Then we have
\begin{equation}\label{eta OD}
\eta_{1}(\cO_{\D}(1))\simeq p_{1*}(q_{1}^{*}\th_{*}\cO_{\D}(1)\otimes \cO_{\L}(0,-1,0)) \simeq p_{1*}(\th'_{*}q_{1}'^{*}\cO_{\D})\simeq\cO_{\D,\{0,\infty\}}
\end{equation}
By Lemma \ref{l:2 pts cal}, $\Phi_{0,\infty}$ sends $\cO_{\D,\{0,\infty\}}$  (which is the same as $\D^{-}_{*}\cO_{\tdN}$ in the notation of  Lemma \ref{l:2 pts cal}) to the Eisenstein sheaf $\Eis_{0,\{0,\infty\}}$.  Combining \eqref{pi Phi} and \eqref{eta OD}, we have
\begin{equation*}
\pi_{1*}\Phi(\cO_{\D}(1))\simeq \Eis_{0,\{0,\infty\}}.
\end{equation*}
Since $\Phi(\cO_{\D}(1))$ is supported on $c_{0}(S)$ which is mapped isomorphically onto its image under $\pi_{1}$, we conclude that $\Phi(\cO_{\D}(1))\simeq \Eis_{0}$. This completes the proof of the Proposition.
\end{proof}

%


\subsection{Newforms} 

On the automorphic side, for $s\in S$, define $\Sh_s \subset \Sh_!( \Bun_{\PGL(2)}(\PP^1, S))$ to be the full subcategory generated by the image of $\pi_s^*$.

Define the dg category of newforms to be the dg quotient 
\begin{equation*}
\xymatrix{
\Sh^{\new} = \Sh_!( \Bun_{\PGL(2)}(\PP^1, S))/\langle \Sh_0, \Sh_1, \Sh_\infty\rangle
}
\end{equation*}
where we kill all ``old forms" coming from fewer points of ramification.


Similarly, on the spectral side, define $C_s \subset  \Coh(\Loc_{\SL(2)}(\PP^1, S))$ to be the full subcategory generated by the image of $\eta^\ell_{s}$.

Define the dg quotient category 
\begin{equation*}
\xymatrix{
C^{\new} =  \Coh(\Loc_{\SL(2)}(\PP^1, S))/\langle C_0, C_1, C_\infty\rangle.
}
\end{equation*}

%
%
\begin{lemma}\label{l:gennew}
\begin{enumerate}
\item $\Sh^{\new}$ is generated by $\Eis_{n}$, for $ n\geq-1$.
\item $C^{\new}$ is generated by $\calO_\Delta(n)$, for $n\geq0$.
\end{enumerate}
\end{lemma}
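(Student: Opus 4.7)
The plan is to treat both parts in parallel: identify the ``old form'' subcategories as sheaves supported on an explicit boundary locus, then rebuild the complement from the listed Eisenstein generators using recollement (on the automorphic side) and a Beilinson-type resolution (on the spectral side).

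For (1), the costandard sheaves $\calF_k(R)$ enumerated in Section~\ref{sss:Bun PGL2 pts} form a generating set of $\Sh_!(\Bun_{\PGL(2)}(\PP^1,S))$, and by Lemma~\ref{lem: easy eis} the sheaves $\Eis_n$ for $n\geq -1$ account, up to shift, exactly for $\calF_n(S)$ with $n\geq 0$ and for $\calF_1(\vn)$. For any remaining $\calF_k(R)$, the strategy is to choose $s\in S$ so that the stratum $c_k(R)$ occurs inside the $\PP^1$-fiber of $\pi_s$ over an appropriate stratum of $\Bun_{\PGL(2)}(\PP^1, S\setminus\{s\})$. Concretely, for $k\geq 1$ and $R$ a proper subset of $S$, pick $s\notin R$, so that the fiber $\PP(\calE_s) = \{\calO(k)_s\}\sqcup\AA^1$ splits into the closed stratum $c_k(R\cup\{s\})$ and the open stratum $c_k(R)$; the recollement triangle then exhibits $\calF_k(R)$ as a cone built from the pullback $\pi_s^*\calF'_k(R)\in\Sh_s$ and the closed-stratum contribution, which is handled by descending induction on $|R|$ or by the Eisenstein generators. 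A parallel case analysis on the $\calE=\calO^2$ component handles the $k=0$ strata $c_0(R)$ with $R\neq S$, and $\calF_1(*)$ is recovered from a recollement triangle on the open locus $c_1(*)\cup c_1(\vn)$, using that $\calF_1(\vn) = \Eis_{-1}[1]$ and that the constant sheaf on this open is built from pullbacks along various $\pi_s$ by inclusion-exclusion over the conditions $\ell_s\not\subset\calO(1)$.

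For (2), the five irreducible components $\Lambda_R$ enumerated in Section~\ref{sss:Loc comp} satisfy $\{A_s=1\}=\Lambda_\vn\cup\Lambda_{S\setminus\{s\}}$ (as reduced subschemes), and the natural map $q_s$ realises $\Loc_{\SL(2)}(\PP^1,S\setminus\{s\},\{s\})$ as the closed immersion of this derived locus into $\Loc_{\SL(2)}(\PP^1,S)$. Since $q_{s*}$ is fully faithful onto coherent sheaves supported there, and the twist $\kappa_s^*\calO(-1)[-1]$ in $\eta^\ell_s$ combined with the commuting Hecke action at the two remaining ramification points supplies the relative Koszul-type resolution needed along the $\PP^1$-fiber of $p_s$, it follows that $C_s$ exhausts $q_{s*}\Coh(\Loc_{\SL(2)}(\PP^1,S\setminus\{s\},\{s\}))$. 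Taking spans, $\langle C_0,C_1,C_\infty\rangle$ contains every coherent sheaf supported on $Z:=\bigcup_{R\neq S}\Lambda_R$, so $C^{\new}$ is controlled by $\Coh(\Lambda_S)$. The non-reduced component $\Lambda_S$ is a nilpotent thickening of $\tdN_\Delta\cong T^*\PP^1$, so generation there reduces to generation of $\Coh^{\SL(2)}(T^*\PP^1)$. Finally, the Beilinson exact sequence $0\to\calO(-1)\to\calO^{\oplus 2}\to\calO(1)\to 0$ on $\PP^1$ and its twists, pulled back to $\tdN_\Delta$, express every line bundle $\calO_\Delta(n)$ in terms of $\{\calO_\Delta(m):m\geq 0\}$, which completes the proof.

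The main technical obstacle will be the careful bookkeeping at the boundaries of the new-form region. On the spectral side, one must verify that coherent sheaves supported on the derived intersections $\Lambda_S\cap\Lambda_R$ ($R\neq S$) genuinely fall into $\langle C_0,C_1,C_\infty\rangle$, and that the single twist $\kappa_s^*\calO(-1)[-1]$ appearing in $\eta^\ell_s$ together with the Hecke actions really suffices to exhaust $\Coh_{\{A_s=1\}}(\Loc)$—the non-reduced structure of component $(5)$ makes this delicate. On the automorphic side, the parallel care is required in separating $c_1(*)$ from $c_1(\vn)$ along the fiber of $\pi_s$, so that the contribution of the open stratum is attributed to $\Eis_{-1}$ plus an old form rather than to a new generator.
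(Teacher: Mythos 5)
Your part (1) is essentially the paper's argument: enumerate the strata $c_k(R)$, observe that the extensions by zero from strata with a ``free'' line at some $s\notin R$ are obtained from $\pi_s^*$ by recollement along the $\PP^1_s$-fiber (the paper phrases this as $\IC_k(R)\in\Sh^{\old}$), and identify the remaining generators $\calF_n(S)$, $\calF_1(\vn)$ with shifts of $\Eis_n$ via Lemma~\ref{lem: easy eis}. That part is fine.

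Part (2) has a genuine gap at its central step. You claim that $C_s=\langle \Im(\eta^\ell_s)\rangle$ exhausts $q_{s*}\Coh(\Loc_{\SL(2)}(\PP^1,S\setminus\{s\},\{s\}))$, hence that $C^{\old}=\langle C_0,C_1,C_\infty\rangle$ contains every coherent complex supported on $Z=\bigcup_{R\neq S}\Lambda_R$, so that $C^{\new}$ is controlled by $\Coh^{\SL(2)}(\Lambda_S)$ alone. This is false. The functor $\eta^\ell_s(\calF)=q_{s*}(p_s^*\calF\otimes\kappa_s^*\calO_{\PP^1}(-1)[-1])$ only produces the $\calO_{\PP^1_s}(-1)$-twisted piece of the Beilinson semiorthogonal decomposition of $\Coh$ of the $\PP^1_s$-bundle $\Loc_{\SL(2)}(\PP^1,S\setminus\{s\},\{s\})$; the complementary piece $q_{s*}(p_s^*\calF)$ is not in $C_s$, and the Hecke actions at the other two ramification points only twist in the other $\PP^1$-directions, so they cannot supply the missing twist in the $s$-direction. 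Concretely, since $p_{s*}\calO_{\PP^1_s}(-1)=0$, the exact functor $R\Gamma(\Loc_{\SL(2)}(\PP^1,S),-)$ annihilates every generator of $C_s$ and hence all of $C^{\old}$ (the paper uses exactly this in the proof of the main theorem). But $\calO_{\Delta_S}$, the structure sheaf of the one-dimensional stratum $\Delta_S/\SL(2)\simeq\PP^1/\SL(2)$, is supported on $\Lambda_{\vn}\subset Z$ (indeed on $\{A_s=1\}$ for every $s$) and has $R\Gamma\simeq\QQ\neq 0$. So $\calO_{\Delta_S}\in\Coh_Z$ but $\calO_{\Delta_S}\notin C^{\old}$, and your reduction of $C^{\new}$ to $\Coh^{\SL(2)}(T^*\PP^1)$ collapses.

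This failure is precisely why the paper's proof of (2) is as long as it is: after stratifying $(\tdN)^{S,\prod=1}$ by intersections of the $\Lambda_R$, it must show that the sheaves living on $\Theta_R=\Lambda_S\cap\Lambda_R$ and on $\Delta_S$ lie in $C'=\langle C^{\old},\calO_\Delta(n):n\geq 0\rangle$ by exhibiting explicit $\SL(2)$-equivariant exact sequences that \emph{use the Eisenstein objects}, namely $0\to\calO_\Delta(n+2)\to\calO_\Delta(n)\to\calO_{\Theta_R}(n)\to 0$ and the restriction of $\calO_{\Delta_R}(n,-1)$ to the $n$-th infinitesimal neighborhood of the diagonal, the latter being needed to capture the negative twists $\calO_{\Delta_S}(n)$, $n<0$. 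If you want to salvage your outline, you must replace the claim ``$\Coh_Z\subset C^{\old}$'' by a stratum-by-stratum verification of which equivariant generators on each intersection stratum actually lie in $\langle C^{\old},\calO_\Delta(n)\rangle$, which is what the paper does.
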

\begin{proof}

(1) Set $\Sh^\old = \langle \Sh_0, \Sh_1, \Sh_\infty\rangle$. We only need to exhibit a set of generators for $\Sh_{!}(\Bun_{\PGL(2)}(\PP^1, S))$ whose members are either in $\Sh^{\old}$, or a (shifted) Eisenstein sheaf. Such a set of generators is given by:
\begin{itemize}
\item For $n\geq 2$, $\IC_{n}(R) \in \Sh^\old$  when $R\not = S$;  by Lemma~\ref{lem: easy eis}, we also have $\calF_n(S)\simeq \Eis_n[-n-2]$.
\item For $n=1$,  $\IC_1(R) \in \Sh^\old$  when $R\not = \vn$ or $S$; 
by Lemma~\ref{lem: easy eis}, we also have $ \calF_1(\vn)\simeq \Eis_{-1}[-1] $
and $ \calF_1(S)\simeq \Eis_1[-3] $.
\item For $n=0$, $\IC_0(R) \in \Sh^\old$  when $R\not = S$;  by Lemma~\ref{lem: easy eis}, we also have $ \calF_0(S)\simeq \Eis_{0}[-2]$.
\end{itemize}


\medskip

(2) Let $C^{\old}=\jiao{C_{0},C_{1},C_{\infty}}$ and let $C'=\jiao{C^{\old}, \cO_{\Delta}(n) ; n\ge0}$. Our goal is to show that  $C'=\Coh(\Loc_{\SL(2)}(\PP^{1},S))$.

We will use the following well-known fact. Let $Y$ be a stack of finite type, $i:Z\incl Y$  a closed substack and  $j:U=Y-Z\incl Y$  the open complement of $Z$. Then $j^{*}$ induces an equivalence $\Coh(Y)/\Coh_{Z}(Y)\simeq \Coh(U)$, where $\Coh_{Z}(Y)$ is the dg subcategory of $\Coh(Y)$ generated by the image of $i_{*}: \Coh(Z)\to \Coh(Y)$. 

Using the above fact and induction, one can show the following statement which we label (\dag)

(\dag): Suppose a stack $Y$ of finite type is stratified into a union of finitely many strata $Y_{\a} \subset Y$, for $\alpha$ in some index set $A$. Suppose for each $\a \in A$, we have a collection of objects $\cF^{(i)}_{\a}\in\Coh(\ov Y_{\a})$, for $i$ in some index set $I_{\a}$, such that $\{\cF^{(i)}_{\a}|_{Y_{\a}};i\in I_{\a}\}$ generate $\Coh(Y_{\a})$. Then the collection $\{\cF_{\a}; \a \in A, i\in I_{\a}\}$ generate $\Coh(Y)$.

We will also use the following additional simple observation we  label (\ddag):
 
(\ddag):  Let $Y$ be an affine scheme with an action of an affine group $H$. Then $\Coh^{H}(Y)$ is generated by objects of the form $V\otimes\cO_{Y}$, where $V$ runs over all finite-dmensional irreducible representations of $H$, and the $H$-equivariant structure on $V\otimes \cO_{Y}$ is given by the diagonal action of $H$.

In Section~\ref{sss:Loc comp} we listed the irreducible components of $(\tdN)^{S,\prod=1}$, and denoted  their reduced structure by $\L_{R}$, for subsets $R\subset S$ such that $\#R\ne1$. We know that $\L_{R}$ is the conormal bundle to the partial diagonal $\D_{R}\subset (\PP^{1})^{S}$.

Let us now stratify $(\tdN)^{S,\prod=1}$ by taking the intersections of the components $\L_{R}$.  By the above statements (\dag) and (\ddag),  it is enough to exhibit a collection of objects in $C'$ on the closure of each stratum whose restrictions to that stratum generate the $\dG$-equivariant derived category of coherent sheaves on that stratum.

The $3$-dimensional strata are the opens $\L^{\circ}_{R}=\L_{R}\setminus \cup_{R'\ne R}\L_{R'}$, for $R\subset S, \#R\ne1$.  Let us describe the quotients $\L^{\circ}_{R}/\dG$, along with a set of objects of $C'$ whose restrictions generate $\Coh^{\dG}(\L^{\circ}_{R})$. 
\begin{enumerate}
\item $\L_{\varnothing}^{\circ}/\dG\simeq \pt/\mu_{2}$, where $\mu_{2}$ is the center of $\dG$. By (\ddag), $\Coh^{\dG}(\L^{\circ}_{R})$ is generated by two elements $\cO_{\L^{\circ}_{R}}$ and $\sgn\otimes \cO_{\L^{\circ}_{R}}$, where $\sgn$ denotes the sign representation of $\mu_{2}$. Therefore, for $R=\varnothing$, 
the restrictions of $\cO_{\L_{\varnothing}}(-1,-1,0), \cO_{\L_{\varnothing}}(-1,0,0)\in C_{0} \subset C^\old \subset C'$ 
to $\L^{\circ}_{\varnothing}$ generate $\Coh^{\dG}(\L^{\circ}_{\varnothing})$. 

\item When $\#R=2$, $\L_{R}^{\circ}/\dG\simeq (\dN\setminus \{1\})/T^{\vee}\simeq \pt/\mu_{2}$. Note $\L_{R}\simeq \PP^{1}\times \tdN$. By the same argument as in the previous case,  
the restrictions of $\cO_{\PP^{1}}(-1)\boxtimes\cO_{\tdN}(1), \cO_{\PP^{1}}(-1)\boxtimes\cO_{\tdN}\in C^{\old}\subset C'$   to $\L^{\circ}_{R}$ generate $\Coh^{\dG}(\L^{\circ}_{R})$. 

\item $\L_{S}^{\circ}/\dG\simeq Y'/\dB$, where $Y'=(\dN\setminus \{1\})^{S,\prod=1}$, and the action of $\dB$ factors through $\dT$. Therefore $\L_{S}^{\circ}/\dG\simeq (\AA^{1}\setminus \{0,1\})\times (\pt/(\Ga\times \mu_{2}))$. Again by (\ddag), $\Coh^{\dG}(\L^{\circ}_{S})$ is generated by two elements $\cO_{\L^{\circ}_{S}}$ and $\sgn\otimes \cO_{\L^{\circ}_{S}}$. Therefore, the restrictions of $\cO_{\Delta}$ and $\cO_{\Delta}(1) \in C'$  to $\L^{\circ}_{S}$ generate $\Coh^{\dG}(\L^{\circ}_{S})$. 
\end{enumerate}

The $1$-dimensional stratum is the intersection of all $\L_{R}$ given by  the diagonal $\Delta_{S}\subset (\PP^{1})^{S}$.  
We will return to it momentarily.

For $\#R=2$, we have $\Delta_{R}=\L_{\varnothing}\cap \L_{R}$,  and set $\Delta_{R}^{\circ}=\Delta_{R}\setminus \Delta_{S}$; we also set  $\Theta_{R}=\L_{S}\cap \L_{R}\simeq\tdN$,  and $\Theta^{\circ}_{R}=\Theta_{R}\setminus \Delta_{S}$. Then 
the 2-dimensional strata are $ \Delta^{\circ}_{R}, \Theta^{\circ}_{R},$ for $\#R=2$. 
Let us describe their quotients by $\dG$, along with a set of objects of $C'$ whose restrictions generate 
 equivariant coherent sheaves.

\begin{enumerate}
\item $\Delta^{\circ}_{R}/\dG\simeq \pt/\dT$. Write $\D_{R}$ as $\PP^{1}\times \PP^{1}$. By (\ddag), $\Coh^{\dG}(\Delta^{\circ}_{R})$ is generated by the restrictions of $\cO_{\PP^{1}\times\PP^{1}}(-1,n)$, for all $n\in\ZZ$, which all lie in $C^{\old}\subset C'$.

\item $\Theta^{\circ}_{R}/\dG\simeq \pt/(\Ga\times \mu_{2})$. Note the canonical projection $\Theta_{R}\simeq \tdN \to \PP^{1}$, providing the line bundles $\cO_{\Theta_{R}}(n)$, for $n\in \ZZ$. By (\ddag), $\Coh^{\dG}(\Theta^{\circ}_{R})$ is generated by  the restrictions of $\cO_{\Theta_{R}}$ and $\cO_{\Theta_{R}}(1)$. Note that $\Theta_{R}\subset \L_{S}=\tcN_{\Delta}$ is a $\dG$-invariant line sub-bundle in the two-dimensional vector bundle $\tcN_{\Delta} \simeq \cO_{\PP^{1}}(-2)^{\oplus 2}$ over $\PP^{1}$. Therefore we have an exact sequence of $\dG$-equivariant coherent sheaves $0\to \cO_{\Delta}(2)\to \cO_{\Delta}\to \cO_{\Theta_{R}}\to 0$. This shows that $\cO_{\Theta_{R}}\in C'$. Similarly, $0\to \cO_{\Delta}(n+2)\to \cO_{\Delta}(n)\to \cO_{\Theta_{R}}(n)\to 0$ implies $\cO_{\Theta_{R}}(n)\in C'$, for any $n\ge0$. 
\end{enumerate}

Finally let us show that $\cO_{\Delta_{S}}(n)\in C'$, for all $n\in\ZZ$. This will  complete the proof by  providing a generating set for $\Coh^{\dG}(\Delta_{S})$, where recall $\Delta_S \subset (\PP^1)^S$ is the closed 1-dimensional stratum.  Since $\Delta_{S}$ is the zero section of $\Theta_{R}$, for any $\#R=2$, we have a $\dG$-equivariant exact sequence $0\to \cO_{\Theta_{R}}(n+2)\to \cO_{\Theta_{R}}(n)\to \cO_{\Delta_{S}}(n)\to 0$. Since we have already shown that $\cO_{\Theta_{R}}(n)\in C'$, for all $n\ge0$, we also have $\cO_{\Delta_{S}}(n)\in C'$, for all $n\ge0$. Now pick any $\#R=2$ and write $\Delta_{R}=\PP^{1}\times\PP^{1}$ and regard $\Delta_{S}$ as the diagonal. Consider the $\dG$-equivariant exact sequence
\begin{equation}\label{nth nghb}
\xymatrix{
0\ar[r] & \cO_{\Delta_{R}}(-1,-n-2)\ar[r] &\cO_{\Delta_{R}}(n,-1)\ar[r] &\cE_{n} \ar[r] & 0
}\end{equation}
obtained by restricting $\cO_{\Delta_{R}}(n,-1)$ to the $n$-th infinitesimal neighborhood of the diagonal $\Delta_{S}$. Then $\cE_{n}$ (which is topologically supported on $\Delta_{S}$) is a successive extension of $\cO_{\Delta_{S}}(n-1), \cO_{\Delta_{S}}(n-3), \cdots,\cO_{\Delta_{S}}(-n-1)$ (each time the twisting decreases by $2$). We have $\cE_{n}\in C^{\old}$, for any $n\ge0$, by \eqref{nth nghb} because the first two terms are in $C^{\old}$. We have already shown that $\cO_{\Delta_{S}}(n)\in C'$, for any $n\ge 0$. Using that $\cE_{n}\in C'$,  for any $n\ge0$, we conclude that $\cO_{\Delta_{S}}(n)\in C'$, for all $n<0$. This completes the proof.
\end{proof}


\subsection{Equivalence}

\begin{prop}\label{p:Coh!} 
Proposition~\ref{prop: extension images} holds:
 $\Phi_\Coh$  lands in
$  \Sh_!(\Bun_{\PGL(2)}(\PP^1, S)).
$

Moreover, $\Phi_\Coh$ is essentially surjective onto $  \Sh_!(\Bun_{\PGL(2)}(\PP^1, S)).
$

\end{prop}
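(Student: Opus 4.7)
\noindent The plan is to verify both assertions on a convenient set of generators of $\Coh(\Loc_{\SL(2)}(\PP^{1},S))$. By Lemma~\ref{l:gennew}(2), every coherent complex is built from two families: (i) objects of the form $\eta^{\ell}_{s}(\calG)$ for $s\in S$ and $\calG\in\Coh(\Loc_{\SL(2)}(\PP^{1},S\setminus\{s\}))$, which span $C^{\old}$; and (ii) the Eisenstein line bundles $\calO_{\D}(n+1)$ for $n\ge -1$ on the non-reduced component $\tdN_{\D}$.

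For family (i), Proposition~\ref{prop: 2 to 3 pt compatibility} identifies
$\Phi(\eta^{\ell}_{s}(\calG))\simeq \pi^{*}_{s}\Phi_{S\setminus\{s\}}(\calG)$.
By Lemma~\ref{l: radon}, $\Phi_{S\setminus\{s\}}$ is an equivalence onto $\Sh_{!}(\Bun_{\PGL(2)}(\PP^{1},S\setminus\{s\}))$, and since $\pi_{s}$ is a $\PP^{1}$-fibration, $\pi^{*}_{s}$ sends complexes supported on a finite-type substack $Z$ to complexes supported on the finite-type substack $\pi^{-1}_{s}(Z)$. Hence the image of family (i) lies in $\Sh_{!}$, and as $s$ and $\calG$ vary it generates $\Sh^{\old}$. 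For family (ii), Proposition~\ref{p:Eis} gives $\Phi(\calO_{\D}(n+1))\simeq\Eis_{n}$, and Lemma~\ref{lem: easy eis} shows that for $n\ge -1$ the stack $\Bun^{n}_{B}(\PP^{1})$ reduces to a single point, so each such $\Eis_{n}$ is the $!$-pushforward of a shifted constant sheaf from a single point and manifestly lies in $\Sh_{!}$.

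Together these two cases prove Proposition~\ref{prop: extension images}. Essential surjectivity then follows in two steps: the essential image already contains $\Sh^{\old}$ (from family (i)); and in the quotient $\Sh^{\new}=\Sh_{!}/\Sh^{\old}$, it contains the generators $\Eis_{n}$ for $n\ge -1$ identified in Lemma~\ref{l:gennew}(1). Since the essential image of $\Phi_{\Coh}$ is closed under shifts, cones, and retracts, reaching a generating set of $\Sh^{\old}$ and of $\Sh^{\new}$ forces it to exhaust $\Sh_{!}$.

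The only nontrivial ingredient beyond invoking earlier results is the generation statement Lemma~\ref{l:gennew}(2), whose proof controls the non-reduced component $\tdN_{\D}$ of the spectral stack by stratifying $(\tdN)^{S,\prod=1}$ into intersections of its irreducible components $\L_{R}$, reducing on each stratum to an explicit list of equivariant line bundles, and then using Koszul-type resolutions to express $\calO_{\D_{S}}(n)$ for $n<0$ in terms of objects in $C^{\old}$ together with $\calO_{\D}(m)$ for $m\ge 0$. This spectral-side bookkeeping is the main obstacle, and it is already handled there; once it is accepted, the present proposition is a formal assembly of Propositions~\ref{prop: 2 to 3 pt compatibility} and~\ref{p:Eis} with Lemmas~\ref{l: radon},~\ref{lem: easy eis}, and~\ref{l:gennew}.
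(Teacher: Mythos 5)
Your argument is correct and is essentially the paper's own proof: both reduce to the generators supplied by Lemma~\ref{l:gennew}, handle the old forms via Proposition~\ref{prop: 2 to 3 pt compatibility} together with the two-point equivalence of Lemma~\ref{l: radon}, and handle the new forms via Proposition~\ref{p:Eis} and Lemma~\ref{lem: easy eis}. The only difference is presentational --- the paper organizes the second step by passing to the quotient categories $C^{\new}$ and $\Sh^{\new}$ rather than arguing directly with a generating set, but the content is the same.
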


\begin{proof}
We continue with the notation introduced in the previous section.

By Proposition \ref{prop: 2 to 3 pt compatibility}, we have $\Phi|_{C_{s}}:C_{s}\to \Sh_{s}$, for $s\in S$. Moreover, it is essentially surjective since $\Phi_{S\setminus\{s\}}$ is essentially surjective. 

Therefore $\Phi$ induces a functor 
\begin{equation*}
\xymatrix{
\Phi^{\new}:C^{\new}\ar[r] &  \Sh(\Bun_{\PGL(2)}(\PP^1, S))/\jiao{\Sh_{0},\Sh_{1},\Sh_{\infty}}.
}\end{equation*}

 It suffices to show that $\Phi^{new}$ has image exactly equal to $\Sh^{\new}$. By Proposition \ref{p:Eis}, 
 we have 
 \begin{equation*}
\xymatrix{
\Phi^\new(\calO_\Delta(n+1))\simeq \Eis_n, & n\geq -1.
}\end{equation*}
Thus by Lemma \ref{l:gennew}(2), the image of $\Phi^{\new}$ lies in $\Sh^{\new}$, and by Lemma \ref{l:gennew}(1), it is essentially surjective onto $\Sh^{\new}$.
\end{proof}

Now we are ready to prove our main theorem for $G=\PGL(2)$.

\begin{theorem}
The functor $\Phi_\Coh$ provides an equivalence
\begin{equation*}
\xymatrix{
 \Coh ( \Loc_{\SL(2)}(\PP^1, S)) \ar[r]^-\sim & \Sh_!(\Bun_{\PGL(2)}(\PP^1, S))
}
  \end{equation*}
compatible with the affine Hecke actions at $s\in S$. 

It restricts to equivalences
\begin{equation*}
\xymatrix{
 \Coh^{\triv} ( \Loc_{\SL(2)}(\PP^1, S)) \ar[r]^-\sim & \Sh_!(\Bun^{\ov 1}_{\PGL(2)}(\PP^1, S))
}
  \end{equation*}
\begin{equation}\label{eq:Phi alt}
\xymatrix{
 \Coh^{\alt} ( \Loc_{\SL(2)}(\PP^1, S)) \ar[r]^-\sim & \Sh_!(\Bun^{\ov 0}_{\PGL(2)}(\PP^1, S)).
}
  \end{equation}
\end{theorem}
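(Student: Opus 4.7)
The plan is to complete the proof by establishing full faithfulness of $\Phi_{\Coh}$. Essential surjectivity is Proposition~\ref{p:Coh!}, and the affine Hecke equivariance at each $s\in S$ follows from Proposition~\ref{p:Hk equiv} applied to the seed object $\Wh_S$ from which $\Phi$ is built. By Proposition~\ref{prop: 2 to 3 pt compatibility} together with the two-point equivalence of Lemma~\ref{l: radon}, $\Phi$ restricts to an equivalence on the old-form subcategories $C^{\old}\simeq \Sh^{\old}$; combined with the essential surjectivity of $\Phi^{\new}$ established in Proposition~\ref{p:Coh!}, it suffices to prove that the induced functor $\Phi^{\new}\colon C^{\new}\to \Sh^{\new}$ on newforms is fully faithful.

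By Lemma~\ref{l:gennew}, $C^{\new}$ is generated by $\{\cO_\Delta(n+1)\}_{n\geq -1}$ and $\Sh^{\new}$ by $\{\Eis_n\}_{n\geq -1}$, matched by Proposition~\ref{p:Eis}. Since the Wakimoto operators $J_m\star_s$ at any fixed $s\in S$ are invertible and intertwine $\cO_\Delta(n+1)\mapsto \cO_\Delta(n+m+1)$ spectrally with $\Eis_n\mapsto \Eis_{n+m}$ automorphically (by Lemma~\ref{l:J on Eis}), continuity together with $\Perf(\Loc_{\SL(2)}(\PP^1,S))$-equivariance reduces full faithfulness of $\Phi^{\new}$ to verifying that
\begin{equation*}
\Phi\colon \Hom^\bullet_{C}(\cO_{\Loc},\cO_\Delta(n+1))\longrightarrow \Hom^\bullet_{\Sh}(\Wh_S,\Eis_n)
\end{equation*}
is an isomorphism for each $n\geq -1$.

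On the spectral side, using that the reduced component $\tdN_\Delta$ is the total space of $\cO_{\PP^1}(-2)^{\oplus 2}$, the source equals $R\Gamma(\PP^1/\SL(2),\bigoplus_{k\geq 0}(k+1)\cO(n+1+2k))$, which by elementary $\SL(2)$-invariant cohomology on $\PP^1$ is $\QQ$ in degree $0$ for $n=-1$ (from the $k=0$ summand $\cO(0)$) and vanishes for all $n\geq 0$. On the automorphic side, for $n\geq 0$ even the sheaf $\Eis_n$ lives on the component $\Bun^{\ov 0}_{\PGL(2)}$ opposite to $\Wh_S$, while for $n\geq 1$ odd its support $c_n(S)$ lies outside the locus $c_1(*)\cup c_1(\vn)$ supporting $\Wh_S$, forcing vanishing. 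For $n=-1$, $\Eis_{-1}\simeq \cF_1(\vn)[-1]$, and the local computation of $k^*j'_*\QQ_{c_1(*)}$ at the boundary point $c_1(\vn)$ of the open stratum $c_1(*)\cup c_1(\vn)$ yields a scalar in degree $0$. Since $\Phi$ sends the identity of $\cO_{\Loc}$ to the identity of $\Wh_S$, the resulting map on one-dimensional spaces is nonzero and hence an isomorphism.

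Finally, the splitting into trivial and alternating summands follows formally. The central automorphism $\mu_2\simeq Z(\SL(2))$ acts on $\Coh(\Loc_{\SL(2)}(\PP^1,S))$ giving the decomposition $\Coh^{\triv}\oplus \Coh^{\alt}$, while on the automorphic side the Atkin-Lehner involutions, intertwined by $\Phi$ through the Hecke equivariance, give the decomposition $\Sh_!(\Bun^{\ov 0}_{\PGL(2)})\oplus \Sh_!(\Bun^{\ov 1}_{\PGL(2)})$. Since $\Phi(\cO_{\Loc})=\Wh_S\in \Sh_!(\Bun^{\ov 1})$ with $\cO_{\Loc}\in \Coh^{\triv}$, the trivial summand is identified with $\Sh_!(\Bun^{\ov 1})$ and, via the Atkin-Lehner intertwining, the alternating summand is identified with $\Sh_!(\Bun^{\ov 0})$. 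The main obstacle in the plan will be justifying the Wakimoto-symmetry reduction from full faithfulness of $\Phi^{\new}$ to the single Hom check on $\cO_{\Loc}$ and $\Wh_S$; this requires arguing that the combined Wakimoto actions at the three ramification points, together with the identification of $\cO_{\Loc}$ with $\cO_\Delta$ modulo $C^{\old}$ (since all other components of $(\tdN)^{S,\prod=1}/\SL(2)$ lie in $C^{\old}$), suffice to propagate the single Hom isomorphism to all of $C^{\new}$ via $\Perf(\Loc)$-equivariance and continuity.
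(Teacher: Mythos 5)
Your overall architecture --- essential surjectivity from Proposition~\ref{p:Coh!}, Hecke equivariance from Proposition~\ref{p:Hk equiv}, and a reduction of full faithfulness to the Hom computations $\Hom(\cO_{\Loc},\cO_\Delta(n+1))\to\Hom(\Wh_S,\Eis_n)$ --- is close to the paper's, and your evaluations of both sides for $n\geq 0$ agree with it. But there are two genuine gaps. First, the d\'evissage ``equivalence on $C^{\old}\to\Sh^{\old}$ plus full faithfulness of $\Phi^{\new}$ implies full faithfulness of $\Phi$'' is false in general: a functor can induce equivalences on a stable subcategory and on the Verdier quotient while destroying the extension data between them (think of the projection $\Perf(\PP^1)\to\langle\cO\rangle\times\langle\cO(1)\rangle$ attached to a semiorthogonal decomposition). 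Moreover you then compute Homs in the ambient categories $\Coh(\Loc_{\SL(2)}(\PP^1,S))$ and $\Sh_!(\Bun_{\PGL(2)}(\PP^1,S))$, not in the quotients $C^{\new}$ and $\Sh^{\new}$, so you are not even verifying what your stated reduction requires. The paper's actual reduction is different and is what makes the ambient Homs the right ones: after using generation by $\cO_{\Loc}(a,b,c)\otimes V$ and $\Perf(\Loc)$-linearity plus continuity to reduce the source object to $\cO_{\Loc}$, one observes that \emph{both} corepresented functors $\Hom(\cO_{\Loc},-)=\Gamma(\Loc,-)$ and $\Hom(\Wh_S,\Phi(-))$ annihilate $C^{\old}$ --- the former because every object of $C_s$ carries a factor $\kappa_s^*\cO_{\PP^1}(-1)$ with no global sections, the latter because $\pi_{s!}\Wh_S\simeq 0$ (Lemma~\ref{l:Avg Wh 0}) and $\Phi(C_s)\subset\Sh_s$ (Proposition~\ref{prop: 2 to 3 pt compatibility}). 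This left-orthogonality is exactly what legitimizes testing only against the newform generators $\cO_\Delta(n)$ of Lemma~\ref{l:gennew} and also identifies the quotient Homs with the ambient ones; neither vanishing statement appears in your proposal.

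Second, in the crucial case $n=-1$ your argument that the map of one-dimensional Hom spaces is nonzero --- ``since $\Phi$ sends the identity of $\cO_{\Loc}$ to the identity of $\Wh_S$'' --- is a non sequitur: a functor preserves identities while possibly killing any given non-identity morphism, so this says nothing about whether the generator of $\Hom(\cO_{\Loc},\cO_\Delta)$ has nonzero image. This nonvanishing is the real content of the fully-faithfulness step, and the paper has to work for it: it is extracted from the proof of Proposition~\ref{p:Eis}, where one shows (after twisting by a Wakimoto operator) that $\Phi$ carries the surjection $\cO_{\Loc}(0,1,0)\to\cO_Y(0,1,0)$ and the square-zero endomorphism $\epsilon$ of $\cO_Y(0,1,0)$ to the corresponding morphisms $J_1\star_1\Wh_S\to b_!T_B$ and $\epsilon'$ up to nonzero scalars, whence the generator $\tilde\epsilon\circ\sigma$ of $\Hom(\cO_{\Loc}(0,1,0),\cO_\Delta(1))$ goes to a nonzero multiple of the generator of $\Hom(J_1\star_1\Wh_S,\Eis_0)$. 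Without some such argument the $n=-1$ case, and with it the theorem, remains unproved.
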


\begin{proof} Compatibility of $\Phi_{\Coh}$ with the affine Hecke actions follow from Proposition \ref{p:Hk equiv}.

Thanks to Proposition~\ref{p:Coh!}, it remains to show:
for $\calF , \calG\in  \Coh ( \Loc_{\SL(2)}(\PP^1, S))$, the natural homomorphism
\begin{equation}\label{eq:Hom FG}
\xymatrix{
\Hom_{  \Coh ( \Loc_{\SL(2)}(\PP^1, S))}(\calF,\calG)\ar[r] & \Hom_{ \Sh_!(\Bun_{\PGL(2)}(\PP^1, S))}(\Phi\calF,\Phi\calG)
}
\end{equation}
is a quasi-isomorphism.

We will make a series of reductions. We introduce the abbreviation $\Sh_! = \Sh_{!}(\Bun_{\PGL(2)}(\PP^1, S))$ and $\Loc=\Loc_{\SL(2)}(\PP^1, S)$. 

First, by continuity, we may assume that $\calF=\calO_{\Loc}(a,b,c)\otimes V$, i.e., the tensor of a line bundle and an $\SL(2)$-representation. Then the left hand side of ~\eqref{eq:Hom FG} takes the form
\begin{equation*}
\xymatrix{
\Hom_{  \Coh ( \Loc)}(\calF,\calG)\simeq \Gamma(\Loc_{\SL(2)}(\PP^1, S), \calO_{\Loc}(-a,-b,-c)\otimes V^\vee \otimes \calG )
}
\end{equation*}

Second,  by construction, we have
\begin{eqnarray*}
\Hom_{\Sh_!}(\Phi\calF,\Phi\calG)&\simeq &
\Hom_{Sh_!}( \Hecke^{\sph}_{u_{0}}(V,  J_a \star_0 J_b \star_1 J_c \star_\infty \Wh_{S}), \Phi\calG)\\
&\simeq&
\Hom_{Sh_!}(  \Wh_{S}, \Hecke^{\sph}_{u_{0}}(V^\vee,  J_{-a} \star_0 J_{-b} \star_1 J_{-c} \star_\infty \Phi\calG))\\
&\simeq&
\Hom_{Sh_!}(  \Wh_{S},  \Phi(\calO_{\Loc}(-a,-b,-c)\otimes V^\vee \otimes \calG))
\end{eqnarray*}
where $u_{0}\in \PS$ is a base point.

Thus we may reduce to the case $\calF=\calO_{\Loc}$, and would like to show that the natural map
\begin{equation*}
\xymatrix{
\Gamma(\Loc_{\SL(2)}(\PP^1, S), \calG ) \ar[r] & \Hom_{ \Sh_!}(\Wh_{S},\Phi\calG)
}
\end{equation*}
is a quasi-isomorphism

Now the global sections functor $\Gamma(\Loc_{\SL(2)}(\PP^1, S),-)$ factors through $C^{\new}$ since objects in $C_{s}$, for $s\in S$, have a factor $\calO_{\PP^{1}}(-1)$ whose global sections must vanish. 

Similarly, since $\pi_{s!}\Wh_{S}=0$ by Lemma \ref{l:Avg Wh 0}, the functor $\Hom_{\Sh_!}(\Wh_{S},-)$ factors through $\Sh^{\new}$.  Furthermore, by Proposition \ref{prop: 2 to 3 pt compatibility}, we have $\Phi(C_{s})\subset \Sh_{s}$, for $s\in S$. Thus the functor $\Hom_{\Sh_!}(\Wh_{S},\Phi(-))$ factors through $C^{\new}$.
 
 Hence by Lemma \ref{l:gennew}(2), it suffices to assume $\calG = \calO_{\Delta}(n)$, for $n\geq0$. 
 
 For $n=0$, both sides are canonically quasi-isomorphic to $\QQ$ and we claim the morphism is a quasi-isomorphism.
 Equivalently, applying Wakimoto symmetry, we claim the induced morphism  
\begin{equation}\label{eq:nontrivial morph}
\xymatrix{
 \Hom_{  \Coh ( \Loc)}(\calO(0, 1, 0),\calO_{\tdN}(1))\ar[r] & \Hom_{ \Sh_!}(J_1\star_1 \Wh_S, \Eis_{0}) 
}
\end{equation}
is a quasi-isomorphism. Returning to the proof and notation of Proposition~\ref{p:Eis}, observe the  left hand side of ~\eqref{eq:nontrivial morph} is generated by the composition
\begin{equation*}
\xymatrix{
 \calO(0, 1, 0) \ar[r]^-\sigma &  \calO_Y(0, 1, 0) \ar[r]^-{\tilde \epsilon} & \calO_{\tdN}(1)
 }
\end{equation*}
such that the endomorphism $\epsilon:\calO_Y(0, 1, 0) \to \calO_Y(0, 1, 0)$ therein is the composition of the surjection $\tilde \epsilon$ and the inclusion 
 $ \calO_{\tdN}(1) \hookrightarrow \calO_Y(0, 1, 0)$.
 Similarly, the  right hand side of ~\eqref{eq:nontrivial morph} is generated by the composition
\begin{equation*}
\xymatrix{
J_1\star_1 \Wh_S \ar[r]^-{\sigma'} & b_! T_B \ar[r]^-{\tilde \epsilon'} & \calO_{\tdN}(1)
 }
\end{equation*}
such that  the endomorphism $\epsilon': b_! T_B \to  b_! T_B$ is the composition of the surjection $\tilde \epsilon'$ and the inclusion 
 $ \Eis_0 \hookrightarrow  b_! T_B$. Moreover, in the two claims in the proof of Proposition~\ref{p:Eis},
 we saw that $\Phi(\sigma) = \sigma'$, and $\Phi(\epsilon)$ is a non-zero multiple of $\epsilon'$.
 Thus $\Phi(\tilde \ep)$ is a non-zero multiple of $\tilde\ep'$, since both lie in one-dimensional spaces, 
 and we have confirmed \eqref{eq:nontrivial morph} is a quasi-isomorphism.

 For $n>0$, both sides of ~\eqref{eq:nontrivial morph} vanish. On the one hand, $\Gamma(\Loc_{\SL(2)}(\PP^1, S),\calO_\Delta(n))$ is a direct sum of the $\SL(2)$-invariants in $\Gamma(\PP^{1},\calO_{\PP^1}(n+2i))$, for $i\geq0$, and hence vanishes for $n>0$. On the other hand,
 the support of $\Phi(\calO_\Delta(n))=\Eis_{n-1}$ is disjoint from the support of $\Wh_{S}$, and hence they are orthogonal.
\end{proof}

By invoking the identifications and symmetries for the automorphic and spectral categories recorded in Sections~\ref{sss:cons shv rel} and ~\ref{sss:coh shv rel}, we can conclude from the theorem 
an additional equivalence. 

\begin{cor}
There is an equivalence
\begin{equation*}
\xymatrix{
 \Coh^{\SL(2)-\alt} (\Loc_{\PGL(2)}(\PP^1, S)) \ar[r]^-\sim & \Sh_!(\Bun_{\SL(2)}(\PP^1, S)).
}
  \end{equation*}
compatible with affine Hecke actions at $s\in S$. Here we write $\Coh^{\SL(2)-\alt}(\Loc_{\PGL(2)}(\PP^1, S))$ for the dg category of $\SL(2)$-equivariant coherent complexes on $(\tilde\calN^\vee)^{S, \prod=1}$,
where the equation $\prod = 1$ is imposed inside of $\PGL(2)$, and such that the center $\mu_2 \simeq Z(\SL(2)) \subset \SL(2)$ acts by the alternating
representation on coherent complexes.


It restricts to equivalences
\begin{equation}\label{SL2 Phi ev}
\xymatrix{   \Coh^{\SL(2)-\alt}(\Loc^{\ov 0}_{\PGL(2)}(\PP^1, S))\ar[r]^-{\sim} &  \Sh_{!}^{\triv}(\Bun_{\SL(2)}(\PP^1, S))     
}
\end{equation}
\begin{equation}\label{eq:cusp}
\xymatrix{   \Coh^{\SL(2)-\alt}(\Loc^{\ov 1}_{\PGL(2)}(\PP^1, S))\ar[r]^-{\sim} &  \Sh_{!}^{\alt}(\Bun_{\SL(2)}(\PP^1, S)).      
}
\end{equation}
\end{cor}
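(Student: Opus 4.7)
The plan is to deduce both equivalences from the just-proved $G = \PGL(2)$ case of the theorem, by combining it with the pullback equivalences and gerbe identifications relating the $\SL(2)$ and $\PGL(2)$ moduli on both sides (Sections~\ref{sss:cons shv rel} and~\ref{sss:coh shv rel}). Both sides of the main equivalence decompose canonically --- the spectral side along the two components of $\Loc_{\PGL(2)}(\PP^1, S)$ from~\eqref{Loc 2 cpts}, and the automorphic side by the central character of $\mu_2 = Z(\SL(2))$ acting on $\Bun_{\SL(2)}(\PP^1, S)$ --- so it suffices to construct compatible Hecke-equivariant equivalences on each summand, namely~\eqref{SL2 Phi ev} and~\eqref{eq:cusp}.

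For~\eqref{SL2 Phi ev}, the key observation is that the natural map $\Loc_{\SL(2)}(\PP^1, S) \to \Loc^{\ov 0}_{\PGL(2)}(\PP^1, S)$ of~\eqref{eq:Loc p} is a $\mu_2$-gerbe: both stacks are quotients of the same scheme $(\tdN)^{S,\prod = 1}$ by $\SL(2)$ and by its quotient $\PGL(2) = \SL(2)/\mu_2$, respectively, and $\mu_2$ acts trivially on $(\tdN)^{S,\prod = 1}$. Unwinding definitions then produces a canonical identification $\Coh^{\SL(2)-\alt}(\Loc^{\ov 0}_{\PGL(2)}(\PP^1,S)) \simeq \Coh^\alt(\Loc_{\SL(2)}(\PP^1,S))$. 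Composing with the equivalence~\eqref{eq:Phi alt} from the theorem and the pullback equivalence~\eqref{eq:PGL ev SL2} then yields~\eqref{SL2 Phi ev}, with Hecke equivariance inherited from the corresponding compatibility of each constituent.

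For~\eqref{eq:cusp}, both sides reduce to $\Vect$. By Lemma~\ref{l:Loc odd}, $\Loc^{\ov 1}_{\PGL(2)}(\PP^1, S) \simeq \Spec \QQ$, and since $\mu_2$ again acts trivially on $(\tdN)^{S,\wt\prod=-1}$, the $\SL(2)$-quotient is the gerbe $B\mu_2$; hence $\Coh^{\SL(2)-\alt}(\Loc^{\ov 1}_{\PGL(2)}(\PP^1,S)) \simeq \Vect$. The corresponding statement $\Sh^\alt_!(\Bun_{\SL(2)}(\PP^1,S)) \simeq \Vect$ was observed in Section~\ref{sss:auto SL2}. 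A generator on each side (the $\alt$ character of $\mu_2$ on the spectral side and $\calF_0(\vn)^\alt$ on the automorphic side) then determines a unique $\QQ$-linear equivalence. The main obstacle is to verify that this choice is compatible with the affine Hecke actions at each $s \in S$: both actions factor through a character of the center of $\cH^{\aff}_{\SL(2)}$, and one must match the automorphic character (computable from $J_\lambda \star_s \calF_0(\vn)^\alt$ via the explicit parameterization of Section~\ref{sss:Bun PGL2 pts}) with the spectral one (determined by the Hecke eigenvalues of the Legendre local system described in Remark~\ref{r:Legendre}).
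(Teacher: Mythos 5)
Your proposal matches the paper's proof: \eqref{SL2 Phi ev} is obtained by composing \eqref{eq:Phi alt}, \eqref{eq:PGL ev SL2}, and the identification $\Coh^{\SL(2)-\alt}(\Loc^{\ov 0}_{\PGL(2)}(\PP^1,S)) = \Coh^{\alt}(\Loc_{\SL(2)}(\PP^1,S))$, while \eqref{eq:cusp} follows because both sides are identified with $\Vect$. The Hecke-compatibility check you flag as the remaining obstacle for the cuspidal summand is declared immediate in the paper (with the matching of the automorphic character of $\cF_0(\vn)^{\alt}$ against the Legendre local system recorded only in the subsequent remark), so you are, if anything, slightly more careful on that point.
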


\begin{proof}
The equivalence~\eqref{SL2 Phi ev} follows by combining \eqref{eq:Phi alt} and \eqref{eq:PGL ev SL2}, and the fact that
\begin{equation*}
\Coh^{\SL(2)-\alt} (\Loc^{\ov 0}_{\PGL(2)}(\PP^1, S))=\Coh^{\alt} (\Loc_{\SL(2)}(\PP^1, S)).
\end{equation*}

By Section~\ref{sss:auto SL2} and ~\ref{sss:spec PGL2} that both sides of ~\eqref{eq:cusp} are equivalent to $\Vect$; the equivalence~\eqref{eq:cusp} then follows immediately.
\end{proof}

\begin{remark} The sheaf $\cF_{0}(\vn)^{\alt}$ on $\Bun_{\SL(2)}(\PP^1, S)$ is a cuspidal Hecke eigensheaf with eigenvalue given by the unique ``odd'' $\PGL(2)$-local system on $\PS$ given in Lemma \ref{l:Loc odd}. See Remark~\ref{r:Legendre} for a description of this local system. 
\end{remark}

\begin{remark}
Though we will not discuss the details here, the above equivalences further restrict to equivalences from those coherent sheaves with nilpotent singular support
to those constructible sheaves that are point-wise compact
\begin{equation*}
\xymatrix{
 \Coh_\calN ( \Loc_{\SL(2)}(\PP^1, S)) \ar[r]^-\sim & \Sh_\dag(\Bun_{\PGL(2)}(\PP^1, S))
}
  \end{equation*}
  \begin{equation*}
\xymatrix{
\Coh^{\SL(2)-\alt}_\calN (\Loc_{\PGL(2)}(\PP^1, S)) \ar[r]^-\sim & \Sh_\dag(\Bun_{\SL(2)}(\PP^1, S)).
}
  \end{equation*}

\end{remark}


\subsection{Unipotently monodromic version} We record here the monodromic form of the prior equivalence. Its construction
and proof are similar.

Let $\Bun_{\PGL(2)}(\PP^1, \wt S)$ denote the moduli of $\PGL(2)$-bundles on $\PP^1$ with $N$-reductions at the points
of $S = \{0, 1, \infty\}$. Note the natural map $\pi:\Bun_{\PGL(2)}(\PP^1, \wt S)\to \Bun_{\PGL(2)}(\PP^1, S)$ is a $T^S=T^{3}$-torsor.

Let $\Sh_!^{mon}(\Bun_{\PGL(2)}(\PP^1, \wt S))$ denote the full dg subcategory of 
$\Sh_!(\Bun_{\PGL(2)}(\PP^1, \wt S))$ generated by pullbacks along $\pi$.

Let $\Loc_{\SL(2)}(\PP^1, \widetilde S)$ denote the Betti moduli of $\SL(2)$-local systems on $\PP^1 \setminus S$ with $B^\vee$-reductions near $S$ with arbitrary induced $T^\vee$-monodromy.
Thus it admits a  presentation
$$
\xymatrix{
\Loc_{\SL(2)}(\PP^1, \widetilde S) \simeq (\widetilde {\SL(2)})^{S, \prod=1} /\SL(2)
}
$$
where, $\wt {\SL(2)}$ is the Grothendieck alteration of $\SL(2)$, and  the equation on the product of the group elements $\prod = 1$ is imposed inside of $\SL(2)$.

Let $\Coh _{\Loc_{\SL(2)}(\PP^{1},S)}( \Loc_{\SL(2)}(\PP^1,  \wt S)) $ be the full subcategory of  $\Coh( \Loc_{\SL(2)}(\PP^1,  \wt S))$ consisting of coherent complexes set-theoretically supported on the substack $\Loc_{\SL(2)}(\PP^{1},S)$.
 


\begin{theorem}
There is an equivalence
\begin{equation*}
\xymatrix{
\wt\Phi_\Coh: \Coh_{\Loc_{\SL(2)}(\PP^{1},S)}( \Loc_{\SL(2)}(\PP^1,  \wt S)) \ar[r]^-\sim & \Sh^{mon}_!(\Bun_{\PGL(2)}(\PP^1, \wt S))
}
  \end{equation*}
compatible with Hecke modifications. 
\end{theorem}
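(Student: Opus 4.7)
The plan is to adapt the proof of Theorem~\ref{thm: intro} verbatim, with the Springer resolution $\tdN\to \dcN$ systematically replaced by the Grothendieck alteration $\wt{\dG}\to \dG$ and with Iwahori-equivariant objects on the affine flag variety upgraded to Iwahori-monodromic ones. First, I would assemble the monodromic local-to-global ingredients. The monodromic affine Hecke category $\cH^{\aff,mon}_{G}$ of $I^{0}$-monodromic sheaves on $\Fl_{G}$ is related to $\Coh^{\dG}(\wt{\St}_{\dG})$, where $\wt{\St}_{\dG}=\wt{\dG}\times_{\dG}\wt{\dG}$, via Bezrukavnikov's monodromic equivalence $\Phi^{\aff,mon}$, which restricts along the closed embedding $\St_{\dG}\hookrightarrow\wt{\St}_{\dG}$ to the unipotent equivalence $\Phi^{\aff}$. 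A monodromic refinement $\wt{Z}:\cH^{\sph}_{G}\to\cH^{\aff,mon}_{G}$ of Gaitsgory's central functor, together with the evident action of $\cH^{\aff,mon}_{G}$ on $\Sh^{mon}_{!}(\Bun_{\PGL(2)}(\PP^{1},\wt S))$ at each $s\in S$, is compatible with spherical Hecke modifications at unramified points. The local-constancy argument of Lemma~\ref{l:hk loc const} and the $\pi_{1}(\PS, u_{0})$-factorization then upgrade this to an action of $\Perf(\Loc_{\SL(2)}(\PP^{1},\wt S))$ on $\Sh^{mon}_{!}(\Bun_{\PGL(2)}(\PP^{1},\wt S))$.

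Having set this up, I would define $\wt{\Phi}$ by acting on the monodromic Whittaker sheaf $\wt\Wh_{S}:=\pi^{*}\Wh_{S}$. Since $\wt\Wh_{S}$ is a pullback, the asphericity of Corollary~\ref{c:Wh asp} persists, and the change-of-level compatibility (Proposition~\ref{prop: 2 to 3 pt compatibility}) transports directly: $\wt\Phi$ intertwines $\pi_{s!}$, $\pi_{s}^{*}$, $\pi_{s}^{!}$ on the automorphic side with the corresponding thickenings of $\eta_{s}$, $\eta^{\ell}_{s}$, $\eta^{r}_{s}$ on the spectral side. The Eisenstein calculation (Proposition~\ref{p:Eis}) also transports: the diagonal component $\tdN_{\Delta}/\dG$ is contained in the unipotent locus, so the sheaves $\cO_{\Delta}(n)$ persist as coherent objects supported on $\Loc_{\SL(2)}(\PP^{1},S)$ and satisfy $\wt\Phi(\cO_{\Delta}(n+1))\simeq \pi^{*}\Eis_{n}$.

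For the new-forms step, one defines $C^{\new,mon}$ and $\Sh^{\new,mon}$ by killing the images of the monodromic $\eta_{s}^{\ell}$ and $\pi_{s}^{*}$, and checks a monodromic analogue of Lemma~\ref{l:gennew}: both quotients should be generated by the classes of $\cO_{\Delta}(n)$ and $\Eis_{n}$ together with extra ``thickening'' objects produced by acting via the monodromic Wakimoto sheaves $\wt{J}_{\l}$ attached to the Grothendieck alteration. Full faithfulness then reduces to the same final $\Hom^{\bullet}$-calculation as in the unipotent case; because $\pi^{*}$ is fully faithful on the monodromic subcategory generated by $\Wh_{S}$ and its Wakimoto translates, this calculation descends to the one already established for $\Phi_{\Coh}$. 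The main obstacle I expect is precisely this monodromic generation statement: one must verify that $C^{\new,mon}$ is generated by $\cO_{\Delta}(n)$ and its monodromic Wakimoto translates, rather than only by the objects used in the unipotent setting. Once this geometric generation is in hand, every other step is a direct transport of the unipotent argument.
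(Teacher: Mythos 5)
Your overall transport of the unipotent argument is reasonable in outline, but there is a genuine error at the very first step that propagates through the rest: the monodromic Whittaker sheaf is \emph{not} $\pi^{*}\Wh_{S}$. The correct object is the \emph{free-monodromic} Whittaker sheaf $\widehat\Wh_{S}=\wt i_{!}\wt j_{*}\calL_{c_{1}(*)}[6]$, where $\calL_{c_{1}(*)}$ is the pro-unipotent local system on the $T^{S}$-torsor $\wt{c_{1}(*)}$ whose monodromy is the completion of the regular representation of $\pi_{1}(T^{S})$ at the augmentation ideal; it satisfies $\pi_{!}\widehat\Wh_{S}\simeq\Wh_{S}$ and corresponds spectrally to the structure sheaf of the completion of $\Loc_{\SL(2)}(\PP^{1},\wt S)$ along $\Loc_{\SL(2)}(\PP^{1},S)$. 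Your choice $\pi^{*}\Wh_{S}$ corresponds instead to the pushforward $i_{*}\calO_{\Loc_{\SL(2)}(\PP^{1},S)}$ along the closed embedding $i$; by the projection formula, acting on it by $\Perf(\Loc_{\SL(2)}(\PP^{1},\wt S))$ factors through restriction to the unipotent locus, so the resulting functor is only naturally defined on objects of the form $i_{*}\calF$ and does not extend to all of $\Coh_{\Loc_{\SL(2)}(\PP^{1},S)}(\Loc_{\SL(2)}(\PP^{1},\wt S))$ without a further Koszul-duality-type argument matching the exterior algebra $\End(i_{*}\calO)$ with $\End(\pi^{*}\Wh_{S})\simeq\Hom(\Wh_{S},\Wh_{S}\otimes\cohog{*}{T^{S}})$ coherently. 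Relatedly, your assertion that ``$\pi^{*}$ is fully faithful on the monodromic subcategory generated by $\Wh_{S}$ and its Wakimoto translates'' is false: $\pi$ is a $T^{3}$-torsor, so $\Hom(\pi^{*}A,\pi^{*}B)\simeq\Hom(A,B\otimes\cohog{*}{T^{3}})$ is strictly larger than $\Hom(A,B)$. The correct adjunction game, which requires the free-monodromic normalization, is $\Hom(\widehat\Wh_{S},\pi^{!}\Eis_{-1})\simeq\Hom(\pi_{!}\widehat\Wh_{S},\Eis_{-1})\simeq\Hom(\Wh_{S},\Eis_{-1})\simeq\QQ$; this single computation is what full faithfulness comes down to.

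Your route is also heavier than necessary on essential surjectivity: since $\Sh^{mon}_{!}(\Bun_{\PGL(2)}(\PP^{1},\wt S))$ is by definition generated by pullbacks along $\pi$, and $\Coh_{\Loc_{\SL(2)}(\PP^{1},S)}(\Loc_{\SL(2)}(\PP^{1},\wt S))$ is generated as a thick subcategory by pushforwards from the unipotent locus, essential surjectivity follows directly from the equivariant case once the change-of-level compatibilities are in place; no monodromic newforms analysis or monodromic generation lemma is needed. The ``main obstacle'' you flag is therefore an artifact of your setup rather than an intrinsic difficulty, but the missing free-monodromic Whittaker object is a real gap that must be repaired before the argument goes through.
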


The proof is similar to the equivariant version with the following changes.

The monodromic version of the Whittaker sheaf $\widehat\Wh_{S}$ corresponds to the structure sheaf of the completion of $\Loc_{\SL(2)}(\PP^1,  \wt S))$ along $\Loc_{\SL(2)}(\PP^1, S)$. It can be constructed as follows. 
Consider the diagram of Cartesian squares of open substacks
\begin{equation*}
\xymatrix{
\ar[d] \wt {c_1(*) }   \ar@{^(->}[r]^-{\wt j} &  \ar[d]_-\pi
  \wt{ c_1(*)} \cup \wt {c_1(\vn)}   \ar@{^(->}[r]^-{\wt i} & 
\ar[d] \Bun^{\ov 1}_{\PGL(2)}(\PP^1, \wt S)\\
 c_1(*)    \ar@{^(->}[r]^-j & 
  c_1(*) \cup c_1(\vn)   \ar@{^(->}[r]^-i & 
\Bun^{\ov 1}_{\PGL(2)}(\PP^1, S)
}\end{equation*}
where the vertical maps are $T^S$-torsors.
In particular, since $ c_1(*)  $ is simply a point, $\wt{ c_1(*)  }$ is a itself a $T^S$-torsor.
Then the free-monodromic Whittaker sheaf 
is given by
\begin{equation*}
\xymatrix{
\widehat\Wh_{S} = \wt{i}_!  \wt{j}_* \calL_{ c_1(*)}[2\#S\cdot \dim T]= \wt{i}_!  \wt{j}_* \calL_{ c_1(*)}[6]\in  \Sh^{mon}_!(\Bun^{\ov 1}_{\PGL(2)}(\PP^1, \wt S))
}\end{equation*}
where $\calL_{ c_1(*)}$ denotes the free-monodromic unipotent local system on $\wt{ c_1(*) }$: its monodromy representation is the completion of the regular representation of $\pi_{1}(\wt{ c_1(*) })\cong \pi_{1}(T^{S})$ at the augmentation ideal. By construction we have
\begin{equation*}
\pi_{!}\widehat\Wh_{S}\simeq \Wh_{S}.
\end{equation*}

The functor $\wt\Phi_\Coh$ is constructed by acting on the monodromic Whittaker sheaf.
Its essential surjectivity follows from that of the equivariant case, and its fully faithfulness comes down to the
calculation 
\begin{equation*}
\Hom(\widehat \Wh_{S}, \pi^{!}\Eis_{-1})\simeq \Hom(\pi_{!}\widehat \Wh_{S}, \Eis_{-1})\simeq \Hom(\Wh_{S},\Eis_{-1})\simeq \QQ.
\end{equation*}



\begin{thebibliography}{99}

\bibitem{AG} D. Arinkin and D. Gaitsgory, Singular support of coherent sheaves, and the geometric Langlands
  conjecture, Selecta Math. (N.S.) 21 (2015), no. 1, 1"--199. 


\bibitem{BD}
A. Beilinson and V. Drinfeld,
Quantization of Hitchin's integrable system and Hecke eigensheaves, available
at math.uchicago.edu/\~{}mitya.


\bibitem{BFN} D. Ben-Zvi, J. Francis, and D. Nadler, Integral
  transforms and Drinfeld centers in derived algebraic geometry,
   J. Amer. Math. Soc. 23 (2010), 909--966.


\bibitem{BNglue} D. Ben-Zvi and D. Nadler, 
Betti spectral gluing, arXiv:1602.07379.
  
  
\bibitem{BNbetti} D. Ben-Zvi and D. Nadler, 
Betti Geometric Langlands, arXiv:1606.08523.
  
  


\bibitem{B} R. Bezrukavnikov, On two geometric realizations of an affine Hecke algebra,
Publications math\'ematiques de l'IH\'ES,
June 2016, Volume 123, Issue 1, pp 1--67



\bibitem{Gvanish}
D. Gaitsgory,
A generalized vanishing conjecture, available at
http://www.math.harvard.edu/~gaitsgde/GL/GenVan.pdf.



\bibitem{G}
D. Gaitsgory,
Construction of central elements in the affine Hecke algebra via nearby cycles, Invent. Math. 144 (2001), no. 2, 253--280.


\bibitem{Gi}
V. Ginzburg,
Perverse sheaves on a Loop group and LanglandsÕ duality.
arXiv:math/9511007.


\bibitem{MV}
I, Mirkovi\'c and  K. Vilonen, 
Geometric Langlands duality and representations of algebraic groups over commutative rings.  
Ann. of Math. (2)  166  (2007),  no. 1, 95--143.

\bibitem{NYhecke}
D. Nadler and Z. Yun,
Hecke modifications on sheaves with nilpotent singular support, in preparation.

\bibitem{Ytilt}
Z. Yun,
Weights of mixed tilting sheaves and geometric Ringel duality.
Selecta Math. (N.S.) 14 (2009), 299--320.

\end{thebibliography}
\end{document}